\numberwithin{equation}{section}
\def\w{\omega}
\def\C{\mathbb{C}}
\def\r{r}
\def\cQ{\mathcal{Q}}
\def\rank{r}
\def\cE{\mathcal{E}}
\def\cM{\mathcal{M}}
\def\length{\operatorname{length}}
\def\ch{\operatorname{ch}}
\def\td{\operatorname{td}}
\def\Gr{\operatorname{Gr}}
\def\Spl{\operatorname{Spl}}
\def\O{\mathcal{O}}
\def\E{\mathcal{E}}
\def\P{\mathbb{P}}
\def\Z{\mathbb{Z}}
\def\Q{\mathbb{Q}}
\def\a{\alpha}
\def\l{\mathit{l}}
\def\O{\mathcal{O}}
\def\t{\tau}
\def\s{\sigma}
\def\ra{\rightarrow}
\def\H{\operatorname{H}}
\def\det{\operatorname{det}}
\def\PGL{\operatorname{PGL}}
\def\H{\operatorname{H}}
\def\M{\mathcal{M}}
\def\E{\mathcal{E}}
\def\Hom{\operatorname{Hom}}
\def\Ext{\operatorname{Ext}}
\def\End{\operatorname{End}}
\def\Pic{\operatorname{Pic}}
\def\Br{\operatorname{Br}}
\def\cal{\mathcal}
\newtheorem{lemma}{Lemma}[section]
\newtheorem{corollary}[lemma]{Corollary}
\newtheorem{theorem}[lemma]{Theorem}
\newtheorem{proposition}[lemma]{Proposition}
\theoremstyle{remark}
\newtheorem{remark}[lemma]{Remark}
\theoremstyle{definition}
\newtheorem{definition}[lemma]{Definition}
\begin{document}

\title[Rank two sheaves on K3 surfaces]{Rank two sheaves on K3 surfaces: A special construction}

\author{Colin Ingalls}
\address{Department of Mathematics and Statistics, University of New Brunswick,
Fredericton, New Brunswick,
Canada.}\email{cingalls@unb.ca}
\author{Madeeha Khalid}
\address{Dept. of Mathematics,
St. Patrick's College, Drumcondra Dublin 9, Ireland}
\email{madeeha.khalid@spd.dcu.ie}

\thanks{
The first author was supported by an NSERC Discovery Grant.
The second author was supported by the IRCSET Embark Initiative Postdoctoral
Fellowship Scheme, Ireland.}

\begin{abstract}Let $X$ be a K3 surface of degree 8 in $\P^5$ with hyperplane section $H$.  We associate to
it another K3 surface $M$ which is a double cover of $\P^2$ ramified on a sextic
curve $C$.  In the generic case when $X$ is smooth and a complete intersection of three quadrics, there is a natural correspondence between $M$ and the moduli space $\cM$ of
rank two vector bundles on $X$ with Chern classes $c_1=H$ and $c_2=4.$
We build on previous work of Mukai and others, 
 giving conditions and examples where $\cM$ is fine, compact, 
non-empty; and birational or isomorphic to $M$.  We also present an explicit calculation of the Fourier-Mukai transform
when $X$ contains a line and has Picard number two.\end{abstract}

\maketitle

\begin{section}{Introduction}

K3 surfaces are a special class of two dimensional complex manifolds which are of interest to both mathematicians and physicists.  They are compact, simply connected complex surfaces with trivial first Chern class. 
The geometry of moduli spaces of sheaves on K3 surfaces is also a very rich 
and intriguing area of study.  Much of the foundation of the subject was laid by Mukai 
in \cite{Mukai1}, \cite{Mukai2} and \cite{Mukai3}, where he derived some beautiful relations between Hodge structures of K3 surfaces and their associated
moduli spaces of sheaves.
In this paper we present some new results on moduli 
spaces of rank two sheaves as well as generalisations of previously
known ones.  One of our main results involves a calculation
of the cohomological Fourier-Mukai transform.  It is the first known such 
calculation for non-elliptically fibred K3 surfaces.

Let $X$ be a K3 surface of degree
8 in $\P^5$, given by the complete intersection 
of
three independent quadrics $Q_0, Q_1, Q_2.$  let $H$ denote its hyperplane class in ${\O_X}(1).$  
Associated to $X$ is a K3 surface $M$ which is a double cover of $\P^2$ ramified on a sextic.
Let $\cal{Q}$ be the net of quadrics spanned by $Q_0,Q_1,Q_2,$ and
 $C:=V(\det \cal{Q})$ the plane sextic curve parameterising the degenerate
quadrics.  
Let $\phi : M \rightarrow \P^2$ be the 
double cover of 
$\P^2$ branched along $C$. Then $M$ is also a K3 surface.  

If the rank of the degenerate quadrics in $\cal{Q}$ is
always $5$ then $C$ is smooth and hence $M$ is smooth. 
 It turns out that
each point of $M$ 
corresponds to a rank $2$ spinor bundle on $X$ with invariants $c_1 = H, c_2 = 4.$  So 
$M$ is naturally associated to a moduli space $\M$ of rank $2$ sheaves on $X.$  This classical 
correspondence is described in \cite{Mukai3}~Example~2.2.  However the complete proof is not presented.
We give the detailed proofs in sections 3.1.2 and 3.1.3.
 
%

In Section 4 we extend this result to arbitrary K3 surfaces of degree $8$ in $\P^5$
specifying the exact
conditions necessary for $\cM$ to be compact.  We prove the following
(Theorem ~\ref{thm:M-nonempty}, 
Proposition~\ref{prop:M-noncompact} and Theorem~\ref{thm:M-bir}).
\begin{theorem}\label{thm:intro}
Let $X$ be a K3 surface of degree 8 in $\P^5$.  Let $H$ denote its hyperplane
class and let $M$ be the associated double cover as described above.   Let $\cM$ 
be the moduli space of stable (with respect to $H$)
rank 2 sheaves on 
$X$ with Chern classes $c_1=H$ and $c_2=4$.  Then,
\begin{enumerate}
\item The moduli space $\cM$ is non-empty.
\item The moduli space $\cM$ is compact if and only if 
$X$ does not contain an irreducible curve $f$ such that $f^2=0$ and $f.H=4$.
Moreover, in this case every element of $\M$ is locally free.
\item If $X$ is a complete intersection then $\cM$ is birational to $M$.
\end{enumerate}
\end{theorem}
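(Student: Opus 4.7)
The three parts are proved by rather different techniques, with (1) and (3) resting on the spinor bundle construction of Section 3 while (2) is a numerical argument controlled by the Mukai vector $v=(2,H,2)$, for which $\langle v,v\rangle = H^2 - 8 = 0$. \emph{For (1), non-emptiness}, I would restrict a spinor bundle from a rank-$\geq 5$ quadric $Q$ in the net $\cQ$ to $X$; by Section 3 this yields a rank $2$ torsion-free sheaf on $X$ with $c_1=H$ and $c_2=4$, and stability follows from simplicity of the spinor bundle together with the absence of line-bundle subsheaves of slope $\geq 4$. This handles generic (complete intersection) $X$; for arbitrary degree-$8$ K3s in $\P^5$ one propagates non-emptiness to every fibre of the universal polarised family via Mukai's deformation theory for moduli with primitive $v$ and $\langle v,v\rangle\geq -2$.

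\emph{For (2), compactness}, the plan has two parts. Step A (the ``moreover'' clause): for any stable $E\in\cM$, the reflexive hull $E^{\vee\vee}$ has Mukai vector $v'=(2,H,2+\ell)$ with $\ell=\length(E^{\vee\vee}/E)$, whence
\[
\langle v', v'\rangle = H^2 - 2\cdot 2\cdot(2+\ell) = -4\ell.
\]
Since stable sheaves satisfy the Mukai inequality $\langle v',v'\rangle\geq -2$, one finds $\ell=0$, so $E$ is locally free unconditionally. Step B (equivalence of compactness with the absence of $f$): non-compactness of $\cM$ is now equivalent to the existence of a strictly semistable sheaf, i.e., an extension $0\to\ideal_Z(f)\to E\to\ideal_{Z'}(H-f)\to 0$ with $f\cdot H=4$. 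A Chern-class computation gives $f^2 = \length(Z)+\length(Z')\geq 0$, Hodge index forces $f^2\leq 2$, and the borderline case $f^2=2$ is excluded because very-ampleness of $|H|$ prevents $H=2f$ in $\Pic(X)$. Hence $f^2=0$ and $Z=Z'=\emptyset$; since $|f|$ is then a pencil, its general member supplies an irreducible representative. Conversely, given such an $f$, Riemann--Roch together with Serre duality applied to $(H-2f)^2=-8$ yields $h^1(\O_X(2f-H))\geq 2$, and any non-trivial extension provides a strictly semistable sheaf and hence non-compactness.

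\emph{For (3), birationality}, I would define a morphism $\Psi:M\to\cM$ by sending a point of $M$ (a quadric $Q\in\cQ$ equipped with a choice of ruling, the two sheets being identified along the sextic $C$) to the restriction to $X$ of the corresponding spinor bundle, using the family construction of Sections 3.1.2--3.1.3. Over the open locus of $M$ lying above quadrics of rank $\geq 5$, $\Psi$ is injective and lands in the stable locus of $\cM$; since both $M$ and $\cM$ are smooth, projective, connected K3 surfaces, an injective morphism defined on a dense open subset is birational. \emph{Main obstacle:} the most delicate step is the forward direction of (2) --- excluding $f^2=2$ and upgrading the effective class to an irreducible representative --- together with the construction of $\Psi$ in (3), which implicitly requires a universal spinor family on $M\times X$ and is the essential technical content of Section 3.
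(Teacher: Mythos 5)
Your parts (1) and (2) track the paper's argument closely: the paper also reduces (1) to primitivity of $v=(2,H,2)$ plus Mukai's existence theorem, proves local freeness by exactly your dimension count on $v(E^{\vee\vee})=(2,H,2+\ell)$, and proves the compactness criterion by analysing the Jordan--H\"older quotient $F_1$ with $v(F_1)=(1,\l_1,1)$, forcing $\l_1^2=0$ and $\l_1.H=4$ via the Hodge index theorem and $v(F_1)^2\geq -2$. One step you flag but do not supply is genuinely needed: the effective class $\l_1$ with $\l_1^2=0$, $\l_1.H=4$ could a priori be $kf_0$ with $f_0$ a primitive elliptic class and $k\in\{2,4\}$, in which case no \emph{irreducible} curve with square $0$ and degree $4$ exists and the stated criterion would be wrong. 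The paper closes this using Saint-Donat: since the generic member of $|H|$ is non-hyperelliptic, $H.f_0\geq 3$ for every elliptic pencil, so $k=1$. Without that input (or an equivalent one) the forward implication of (2) is incomplete. Similarly, your claim that local freeness is "unconditional" quietly assumes $E^{\vee\vee}$ is simple; the paper first proves $\mu$-stability of $E$ (which uses compactness) before passing to the double dual, and the theorem only asserts local freeness in the compact case.

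The serious gap is in (3). Your argument "both $M$ and $\cM$ are smooth, projective, connected K3 surfaces, so an injective morphism from a dense open subset is birational" assumes exactly what fails in the cases (3) must cover: by part (2), a complete intersection $X$ containing a curve $f$ with $f^2=0$, $f.H=4$ has \emph{non-compact} $\cM$ (the paper's Proposition~\ref{prop:M-noncompact} exhibits an $18$-dimensional family of these), and $M$ itself is singular whenever the net contains quadrics of rank $\leq 4$. In the non-compact case Mukai's theorem does not apply to give irreducibility of $\cM$, and an injection from a dense open subset of $M$ only shows $\cM$ \emph{contains} a surface birational to $M$; if $\cM$ had another two-dimensional component the conclusion would fail. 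The paper supplies the missing irreducibility by a degeneration argument that your proposal has no substitute for: it forms the relative moduli space of semistable sheaves over the moduli of polarised degree-$8$ K3s (Maruyama), invokes Langton's valuative criterion for properness, identifies the fibres over the dense $\rho=1$ locus with $M_s$ via Theorem~\ref{thm:muk}, and uses Mukai's Corollary 3.18 (the strictly semistable boundary is discrete) to conclude that every fibre of the relative moduli space has a single component. That specialization step is the essential content of part (3), and it is the piece your plan is missing.
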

For $X$ a generic K3 surface of degree 8, $\Pic(X) = \Z H$ and $\M$
 is {\it not a fine moduli} space, i.e. there does not exist 
a universal sheaf $\cE$ on $X\times \M$ such that $\cE_{|_{X\times m}} \simeq E_m$
where $E_m$ is the isomorphism class of sheaves corresponding to $m$ in $\cM$.   However a {\it quasi-universal sheaf}  does exist.
An explicit construction of a quasi-universal sheaf is in \cite{Khalid}.  

In general $X$ is not a moduli space of sheaves on $M$.  The 
obstruction to the existence of a universal sheaf $\cE$, is an element $\alpha$ of $\Br(M)$.  In other words, an $\alpha$-twisted sheaf exists on $X \times M$.  So $X$ can 
be viewed as a moduli space of $\alpha$-{\it twisted} sheaves on $M$.  See for instance \cite{Caldararu}, \cite{Sawon}.  The order of $\alpha$ in $\Br(M)$ is 2 and it corresponds
to an even sublattice of $T_M$ of index 2.  For a discussion of two-torsion elements of 
$\Br(M)$ see \cite{Geemen}.

If $X$ contains a line, then it turns out the $\M$ is a fine moduli space and a universal 
sheaf $\E$ exists on $X \times M$.   In fact
$M \simeq X$ follows as an application of a more general result in  \cite{MadonnaNikulin}. 
  We find a lattice in $\H^*(X, \Z) / (\Z \cdot (2,\O_X(1), 2))$ which is 
isomorphic to $\Pic(M).$  Using Mukai's previous results and techniques from deformation theory, we
compute the Fourier-Mukai map on cohomology (Theorem 5.3)
$$f_{\E} : \H^*(X, \Z) \rightarrow \H^*(M, \Z).$$

We calculate explicitly the invariants of the universal sheaf $\E$ on 
$X \times M \simeq X \times X$ and prove that the inverse
Fourier-Mukai map presents $X$ as a moduli space of sheaves on $M$ also with the 
same Chern invariants (Theorem~\ref{thm:main}). 
\begin{theorem}
Let $X$ be a K3 surface of degree 8 in $\P^5$.  Assume that $X$ contains a line
and $\rho(X)=2$.  let $\M$ be the moduli space of rank $2$ sheaves on $X$ with 
 $c_1=H$ and $c_2=4.$  Then 
 \begin{enumerate}
\item  The moduli space $\M$ is a fine moduli space and is isomorphic to X.  
\item The universal sheaf $\E$ on $X \times X$ is symmetric, i.e. its restriction to 
either of the 
 two factors is a rank $2$ sheaf with Chern invariants $c_1 = H$ and $c_2 = 4.$
\end{enumerate} 
\end{theorem}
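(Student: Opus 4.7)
The plan is to establish the two parts of the statement in turn, with part~(1) further broken into compactness of $\M$, fineness, and the isomorphism $\M\simeq X$, and with part~(2) then following quickly from the explicit Fourier--Mukai calculation of Theorem~5.3.

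For compactness I appeal to Theorem~\ref{thm:intro}(2), which reduces matters to showing that $X$ contains no curve $f$ with $f^2=0$ and $f\cdot H=4$. Writing $f=aH+bL$ in $\Pic(X)=\Z H\oplus\Z L$, with $H^2=8$, $L^2=-2$ (adjunction on the rational curve $L$), and $H\cdot L=1$, the condition $f\cdot H=4$ forces $b=4-8a$, and substitution into $f^2=0$ gives $17a^2-17a+4=0$, whose discriminant $17$ is not a square; no integer solution exists. Non-emptiness is Theorem~\ref{thm:intro}(1). For fineness, the Mukai vector is $v=(2,H,2)$ and is primitive, and the class $w=(0,L,0)$ lies in the algebraic Mukai lattice $\widetilde{\H}^{1,1}(X,\Z)$ with Mukai pairing $\langle v,w\rangle=H\cdot L=1$; by Mukai's criterion the Brauer obstruction vanishes and a universal sheaf $\E$ on $X\times\M$ exists. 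Finally, Theorem~\ref{thm:intro}(3) gives $\M$ birational to $M$, which combined with the Madonna--Nikulin isomorphism $M\simeq X$ and the fact that birational projective K3 surfaces are isomorphic yields $\M\simeq X$.

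For part~(2), the restriction $\E|_{X\times\{m\}}$ has Mukai vector $(2,H,2)$ by construction of $\M$, which is equivalent to rank $2$, $c_1=H$, $c_2=4$. To obtain the same conclusion for $\E|_{\{x\}\times\M}$, I identify this restriction with the Fourier--Mukai image $\Phi_{\E}(\O_x)$, so that its Mukai vector equals $f_{\E}(v(\O_x))=f_{\E}(0,0,1)$, where $f_{\E}$ is the cohomological transform of Theorem~5.3. The task reduces to reading off $f_{\E}(0,0,1)$ from the explicit formula and verifying it equals $(2,H,2)$, with sign and shift conventions controlled by the fact that $f_{\E}$ is an isometry of Mukai lattices.

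The principal obstacle is Theorem~5.3 itself: determining the action of $f_{\E}$ on each generator of the rank-four algebraic Mukai lattice spanned by $(1,0,0)$, $(0,H,0)$, $(0,L,0)$, $(0,0,1)$. The expected approach combines Mukai's deformation-theoretic technique for identifying Fourier--Mukai transforms with the extra geometric information provided by the line $L\subset X$ (for instance, the existence of the class $(0,L,0)$ was already decisive for fineness). Once that map is in hand, part~(2) reduces to a single evaluation, completing the proof.
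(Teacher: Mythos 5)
Your proposal is correct and follows essentially the same route as the paper's proof: compactness/local freeness via the lattice computation ruling out $f^2=0$, $f\cdot H=4$ in $\Z H\oplus\Z\l$; fineness via the Mukai pairing $\bigl((2,H,2),(0,\l,0)\bigr)=1$; the isomorphism $\M\simeq M\simeq X$ from birationality to $M$ plus the Madonna--Nikulin identification $M\simeq X$; and part (2) by identifying $v(\E|_{\{x\}\times\M})$ with $f_\E(0,0,1)=(2,H,2)$ (up to twist by a line bundle) as computed in Theorem~\ref{thm:mukai-map}. The only minor omission is that invoking Theorem~\ref{thm:intro}(3) requires $X$ to be a complete intersection, which the paper verifies in Lemma~\ref{lem:X-ci} by the same style of computation (no rational solution to $f^2=0$, $f\cdot H=3$).
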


 \begin{remark}
Note that even if the moduli space is isomorphic to the original K3 surface 
it is
 not always the case 
that the universal sheaf of the moduli problem is symmetric.  For a counter example see 
5.3.8~\cite{HuybrechtsLehn}.
\end{remark}

The arguments for the above results use various techniques involving stability 
and our expression for the Fourier-Mukai transform. 
For instance along the way we prove that every sheaf $E_m$ corresponding to $m \in \M$ is also $\mu$-stable (also known as slope-stable).

 When $\rho(X) >1 $ it does not necessarily follow that $\M$ is fine. 
 We work out an example where $\rho(X) = 2$, $\M$ is compact, 
but $\M$ is not fine (Proposition~\ref{tricase} and 
Theorem~\ref{thm:M-nonfine}). 
\begin{theorem}
Let $X$ be a K3 surface of degree 8 in $\P^5$ that contains a plane
conic and $\rho(X)=2$.  Then
\begin{enumerate}
\item The associated K3 surface $\phi: M \rightarrow \P^2$ is branched over a 
sextic which admits a tritangent.
\item The moduli space $\cM$ is isomorphic to $M$, and is not fine, i.e. there does not exist a universal sheaf $\E$ on $X \times \cM$.
\end{enumerate}
\end{theorem}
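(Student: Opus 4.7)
For Part (1), my plan is to exhibit a specific line $\ell\subset\P(\cQ)=\P^2$ and show that the sextic $C$ cuts it in three double points. Let $D\subset X$ be the plane conic and $\Pi\cong\P^2\subset\P^5$ the $2$-plane it spans. Every quadric $Q\in\cQ$ vanishes on $D$, so the restriction $Q|_\Pi$ is a plane conic containing $D$ and hence, by B\'ezout, a scalar multiple of the defining form $f_D$ of $D$. The assignment $Q\mapsto(Q|_\Pi)/f_D\in\C$ is a nonzero linear functional on $\cQ$ (nonzero since $\Pi\not\subset X$), whose $2$-dimensional kernel is the pencil of quadrics containing all of $\Pi$, projectivizing to a line $\ell\subset\P(\cQ)$. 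In coordinates with $\Pi=\{x_0=x_1=x_2=0\}$, the symmetric matrix of any $Q\in\ell$ takes the block form $M_Q=\bigl(\begin{smallmatrix}A&B\\B^T&0\end{smallmatrix}\bigr)$, and a block--LU computation (valid when $A$ is invertible and extended by continuity) gives $\det M_Q=-(\det B)^2$. Since $B$ depends linearly on the parameter $[s:t]$ of $\ell$, the restriction of $\det\cQ$ to $\ell$ is the square of a cubic binary form, and for generic $X$ this cubic has three distinct roots $p_1,p_2,p_3$, so $C\cap\ell=2p_1+2p_2+2p_3$ and $\ell$ is tritangent to $C$.

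For Part (2), the plan is to combine Theorem~\ref{thm:intro} with a standard divisibility computation for the Mukai vector. The Picard lattice is $\Pic(X)=\Z H\oplus\Z D$ with intersection form $\bigl(\begin{smallmatrix}8&2\\2&-2\end{smallmatrix}\bigr)$; writing $f=aH+bD$, the system $f\cdot H=4$, $f^2=0$ becomes $8a+2b=4$ and (after substituting $b=2-4a$) $5a^2-5a+1=0$, which has no rational solution. By Theorem~\ref{thm:intro}(2) the moduli space $\cM$ is then compact; by Theorem~\ref{thm:intro}(3) it is birational to $M$; and since birationally equivalent (smooth) K3 surfaces are isomorphic, $\cM\simeq M$. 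For non-fineness I will invoke the standard criterion that the Brauer obstruction to a universal sheaf has order $d:=\gcd\{\langle v,w\rangle:v\in\Z\oplus\Pic(X)\oplus\Z\}$, where $w=(2,H,2)$ is the Mukai vector of the sheaves parameterized by $\cM$. Here $\langle(r,L,s),w\rangle=L\cdot H-2r-2s$; since $H\cdot H=8$ and $D\cdot H=2$ are both even, $L\cdot H$ is even for every $L\in\Pic(X)$, so all such pairings are even, while the value $2$ is realized at $v=(0,D,0)$. Thus $d=2$, and no universal sheaf can exist on $X\times\cM$.

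The main technical obstacle I anticipate is the degeneracy analysis in Part (1): verifying that the cubic $\det B_{s,t}$ has three \emph{distinct} roots, so that $\ell$ is a genuine tritangent rather than, for instance, the tangent at a flex of $C$ or a line through an extra node. A clean way to handle this is to observe that a multiple root of the cubic would force the associated quadric to have rank at most $4$, producing a singular point of $C$ on $\ell$; one then argues that the extra isotropic $3$-plane on $X$ that such a quadric would carry generates a new algebraic class, contradicting $\rho(X)=2$. The remaining ingredients---the birational-to-isomorphism step for K3 surfaces and the Mukai-divisibility criterion for fineness---are standard.
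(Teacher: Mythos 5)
Your Part (1) is essentially the paper's own argument (Proposition~\ref{tricase}): pick coordinates adapted to the plane spanned by the conic, note that the quadrics through $X$ then have a symmetric matrix whose lower-right $3\times 3$ block vanishes identically along the line of the net consisting of quadrics containing that plane, and conclude that $\det\cQ$ restricted to that line is the square of a cubic. The paper does not address the distinctness of the three tangency points either, so the "obstacle" you flag is not something it resolves; "tangent at three points" is read off directly from the perfect-square identity.

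Part (2) contains one genuine gap. You deduce $\cM\simeq M$ from "$\cM$ is birational to $M$" together with "birational smooth K3 surfaces are isomorphic," but you never establish that $M$ \emph{is} smooth, i.e.\ that the sextic $V(\det\cQ)$ is smooth (equivalently, that every degenerate quadric in the net has rank exactly $5$). If $M$ were singular the claimed isomorphism would be false, since $\cM$ is smooth. The paper closes exactly this point: $\cM$ is the minimal resolution of $M$ by Theorem~\ref{thm:M-bir}, and Mukai's Hodge isometry gives $\rho(\cM)=\rho(X)=2$, while $h$ and a component $\Gamma$ of the preimage of the tritangent already span a rank-two sublattice of $\Pic(\cM)$; a singular point of $M$ would add an independent exceptional $(-2)$-curve and force $\rho(\cM)\geq 3$, a contradiction. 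You should supply this (or an equivalent) argument. A smaller omission: before invoking Theorem~\ref{thm:intro}(3) (and indeed before $M$ is even defined as in the statement) you must check that $X$ is a complete intersection, i.e.\ rule out an irreducible $f$ with $f^2=0$, $f\cdot H=3$; this is the same one-line lattice computation you already did for $f\cdot H=4$.

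Your non-fineness argument is correct but takes a genuinely different route from the paper's. You compute $\sigma_{\min}=\gcd\{(w,v)\}=2$ directly on $X$ from $H^2=8$ and $D\cdot H=2$; this is precisely the criterion the paper states in Section~3 and applies in Proposition~\ref{prop:M-noncompact}, so it is legitimate and arguably cleaner. The paper's own proof of Theorem~\ref{thm:M-nonfine} instead works on the $M$ side: assuming a universal sheaf exists, $v(E_x)=(2,mh+n\Gamma,k)$ must be isotropic, Newstead's theorem forces $c_1(E_x)\cdot h=2m+n$ to be odd, and the parity of $m^2+mn-n^2=2k$ gives a contradiction. Your version trades the geometry of $\Pic(M)$ for reliance on the "only if" direction of the $\sigma_{\min}$ criterion.
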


We also exhibit an example when $\M$ is non-empty but {\it not compact}.  
This is the case when $X$
contains a curve $f$ with $f^2=0$ and
$f.H=4$.  
The generic $\M$ also non fine.

{\bf Acknowledgements}
The second author would like to thank Professor Le De Trang for his support during 
the author's stay at the International Centre for Theoretical Physics (Trieste), and Bernd Kreussler for many helpful discussions.

\section{Preliminaries}\label{sec:prelim}

In this paper unless stated otherwise 
\begin{itemize}
\item The base field is $\C.$
\item a {\it surface} means a nonsingular compact connected $2$ dimensional complex-analytic manifold.
\end{itemize}

\subsection{K3 surfaces}

K3 surfaces are like $2$ dimensional generalisations of the one dimensional complex torus in that 
they admit a nowhere vanishing global holomorphic two form.  They provide fascinating examples in the
 context of algebraic, differential and arithmetic geometry and more recently also in mathematical physics. 
 In this paper we only concern ourselves with their algebro-geometric nature.  The following is a standard
 definition.

\begin{definition}\label{def:k3}
A {\it K3 surface} is a smooth, compact, complex, simply connected surface with trivial canonical bundle.
\end{definition}

\begin{remark} The reader may find other equivalent definitions of a K3 surface in the literature which 
assume K\"{a}hlerity.  For instance, sometimes a K3 surface is defined as; a compact K\"{a}hler 4 manifold 
with holonomy  group $\mbox{SU}(2)$.
Showing this equivalence however is a non-trivial exercise and uses some deep results.
\end{remark}

We work only with algebraic K3 surfaces, 
i.e. those which admit an embedding in some 
projective space $\P^n.$  Some examples are; double covers of $\P^2$ branched along a smooth
sextic curve, smooth quartic hypersurfaces in $\P^3$, complete intersections of a quadric and cubic 
hypersurface in $\P^4$, and triple intersection of quadric hypersurfaces in $\P^5.$  A dimension count shows
 that each of these forms a $19$ dimensional family.
In general for each $n$ there is a 19 dimensional moduli space of K3 surfaces occurring as normal surfaces 
of degree $2n-2$ in $\P^n$.  For $n>5$ they
are not complete intersections.

Another interesting class of K3 surfaces are {\it Kummer surfaces.}  They are obtained by taking the
quotient space of the canonical involution on a 2-dimensional complex torus $T$ and blowing up the 16 
singular points.  If the complex torus $T$ is non algebraic then the associated Kummer surfaces is also non algebraic.
 There is a $20$ dimensional universal family of K3 surfaces, the generic member of which is non algebraic 
and which contains a countable dense union of $19$ dimensional subsets parameterising the algebraic K3 surfaces.  Algebraic Kummer K3 surfaces form a dense subset of this universal family.

The cohomology  and Hodge decomposition of a K3 surface is completely determined.

\begin{theorem}
Let $X$ be a K3 surface.  Then:
\begin{itemize}
\item $\H^1(X, \Z) = \H^3(X, \Z) = 0$.
\item $\H^2(X, \Z)$ is a rank 22 lattice and, with the cup product, is isometric to 
$$L:=(-E_8)^2\oplus U^3.$$  Here $E_8$ is the root lattice of the exceptional lie algebra $\mathfrak{e}_8$ 
and $U$ represents
the standard unimodular hyperbolic lattice in the plane.  
\item $h^{0,1} = h^{1,0} = h^{1,2} = h^{2,1} = 0$.
\item $h^{2,0} = h^{0,2} = 1, h^{1,1} = 20$.
\item $\Pic X \simeq  \H^{1,1}(X, \Z) \simeq N_X,$ where $N_X$ is the N\'eron-Severi group of $X$.
\end{itemize}
\end{theorem}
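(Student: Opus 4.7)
The plan is to assemble the theorem from Hodge theory, Noether's formula, the Hirzebruch signature theorem, the classification of indefinite even unimodular lattices, and the exponential sheaf sequence, taking advantage of the simple connectivity and trivial canonical bundle hypotheses.

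First I would dispose of the odd cohomology. Simple connectivity gives $\H^1(X,\Z) = 0$ by Hurewicz and the universal coefficient theorem, and then Poincaré duality on the compact oriented $4$-manifold $X$ yields $\H^3(X,\Z) \cong \H_1(X,\Z) = 0$. Consequently $b_1 = 0$, so the Hodge decomposition forces $h^{1,0} = h^{0,1} = 0$, and complex conjugation gives $h^{2,1} = h^{1,2} = 0$. The triviality of $K_X$ gives $h^{2,0} = h^0(X,\O_X) = 1$ and hence $h^{0,2} = 1$ by conjugation.

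Next I would compute the Betti numbers by combining Noether's formula with the topological Euler characteristic. Since $c_1(X) = 0$, Noether gives $\chi(\O_X) = c_2(X)/12$, while Hodge theory gives $\chi(\O_X) = 1 - h^{0,1} + h^{0,2} = 2$. So the topological Euler number is $c_2(X) = 24$, and together with $b_0 = b_4 = 1$, $b_1 = b_3 = 0$ this yields $b_2 = 22$. Hodge decomposition then forces $h^{1,1} = 20$. The Hirzebruch signature theorem gives $\sigma(X) = (c_1^2 - 2c_2)/3 = -16$, so the intersection form on $\H^2(X,\Z)$ has signature $(3,19)$. Wu's formula, together with $c_1 \equiv 0 \pmod 2$, shows the form is even; simple connectivity together with Poincaré duality shows it is unimodular. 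By the classification of indefinite even unimodular lattices (Milnor--Serre), the only such lattice of signature $(3,19)$ is $L = (-E_8)^2 \oplus U^3$, which is the desired isometry type.

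The main obstacle, and the step I would spend the most care on, is identifying $\Pic X$ with $\H^{1,1}(X,\Z)$ and with the Néron--Severi group. Here I would use the exponential sheaf sequence
\begin{equation*}
0 \longrightarrow \Z \longrightarrow \O_X \longrightarrow \O_X^* \longrightarrow 0
\end{equation*}
and its long exact sequence in cohomology. Since $\H^1(X,\O_X) = 0$ (because $h^{0,1}=0$), the connecting map $c_1 : \Pic X = \H^1(X,\O_X^*) \to \H^2(X,\Z)$ is injective, and its image is the kernel of $\H^2(X,\Z) \to \H^2(X,\O_X)$; under the Hodge decomposition this kernel is precisely $\H^2(X,\Z) \cap \H^{1,1}(X) =: \H^{1,1}(X,\Z)$. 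Finally, since $\H^1(X,\O_X) = 0$ the component $\Pic^0(X)$ is trivial, so $\Pic X$ coincides with the Néron--Severi group $N_X$, completing the proof.
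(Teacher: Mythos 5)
The paper does not prove this statement: it is quoted as standard background in Section 2.1, with the reader referred to \cite{BPV}, \cite{Beauville} and \cite{GH} for proofs. Your argument is the standard textbook derivation and is correct: Hurewicz plus Poincar\'e duality for the odd cohomology, Noether's formula for $c_2=24$ and hence $b_2=22$, the signature theorem plus Wu's formula plus unimodularity and the Milnor--Serre classification for the isometry type $(-E_8)^2\oplus U^3$, and the exponential sequence with $h^{0,1}=0$ for the Lefschetz $(1,1)$ identification $\Pic X\simeq \H^{1,1}(X,\Z)\simeq N_X$. Two small points of hygiene: the vanishing $h^{2,1}=h^{1,2}=0$ does not follow from conjugation alone (which only gives equality); you need $b_3=0$ together with $h^{3,0}=h^{0,3}=0$ on a surface, or Serre duality $h^{1,2}=h^{1,0}=0$ using $K_X\simeq\O_X$. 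And your appeal to the Hodge decomposition implicitly uses that $X$ is K\"ahler, which is itself a nontrivial theorem (stated as such in the paper); if you want to avoid it, the identity $b_1=h^{1,0}+h^{0,1}$ holds for arbitrary compact complex surfaces and suffices here. Neither point is a genuine gap.
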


The rank of the Picard group is denoted by $\rho(X)$.  It can be anything from $0$ to $20$.
When $\rho(X) = 20$ then $X$
is said to be an {\it exceptional } K3 surface.  An example is the Fermat quartic 
hypersurface in $\P^3$ given by $x_0^4 + x_1^4 + x_2^4 + x_3^4 = 0$, where
$x_0, \ldots , x_3$ are homogeneous coordinates on $\P^3$.

We set $L_{\C} := L \otimes {\C}$ with the pairing $( \, , )$ extended $\C$-bilinearly.
For $\w \in L_{\C}$ we denote by $[\w] \in  {\P(L_{\C})}$ the line $\C\w$,
and set $$\Omega := \{ [\w] \in \P(L_{\C}) \mid ( \w , \w ) = 0\mbox{ and }(\w, \overline{\w}) > 0\}.$$
This set $\Omega$ is referred to as the {\it period domain} of K3 surfaces.  It has 
dimension 20.

An isometry $\alpha : \H^2(X, \Z) \rightarrow L$ is called a {\it marking} and determines a line in $L_{\C}$ spanned 
by the $\alpha_{\C}$ image of the nowhere vanishing holomorphic 2-form $\w_X$.
The relations $( \w_X , \w_X ) = 0$ and $( \w_X, \overline{\w_X} ) > 0$ imply that this
line considered as a point of $\P(L_{\C})$, lies in $\Omega$. This point denoted $\a(\w_X)$ is called the
 {\it period point} of the marked K3 surface $X, \alpha.$

If we have a family $p: \boldsymbol{X} \rightarrow S$ of K3 surfaces together with an isomorphism 
$\a:R^2p_*\Z\rightarrow L_S$ where $L_S$ is the locally constant sheaf on $S$ with values in $L,$  then it is called 
a {\it marked family.}  The isomorphism $\a$ is called a {\it marking}  of the family.  For example K3 surfaces occurring as quartic hypersurfaces in $\P^3$ form a $19$ dimensional family.  The 
base (space) $S$ is isomorphic to the set of irreducible homogeneous quartic polynomials
in four variables, modulo the natural action of $\PGL(4)$. The space of irreducible homogeneous quartic polynomials(modulo scale invariance) in four variables has dimension 
$\binom{4+4-1}{4} - 1 = 35 -1 = 34$.  The dimension of 
$\PGL(4)$ is $16-1 = 15$, so $34-15 = 19.$  The moduli space of K3 surfaces realised as double covers of the plane is paramterised by the space of homogeneous degree six polynomials modulo the action of $\PGL(3)$.  The moduli space of degree six polynomials in three variables (modulo multiplication by constants) has dimension 
$\binom{6+3-1}{6}-1 = 28 - 1 = 27$.  The group $\PGL(3)$ has dimension $9-1 = 8$.  Therefore we get a $27 - 8 = 19$ dimensional family of double covers of the plane.  A similar dimension count shows that the family of degree $8$ K3 surfaces in $\P^5$ has dimension 19 (\cite{GH}).

Given a family 
$p:\boldsymbol{X} \rightarrow S$, if we restrict $p$ to a simply connected open subset of $U$ of $S,$ then $R^2 p_*\Z$ is locally trivial and 
admits a {\it marking.}  So marked families always exist. 
A {\it deformation} of a K3 surface $X$ parameterised by $S$ is a flat family $p: \boldsymbol{X} \rightarrow S$ 
along with a base point $s_0 \in S$ and an isomorphism from $X$ to the fibre $\boldsymbol{X}_{|_{s_0}} = X_{s_0}.$
The notion of a {\it marked deformation} of a K3 surface $X$ is similar.

To any marked family of K3 surfaces given by maps $p:\boldsymbol{X} \rightarrow S,$ and
  $\a:R^2p_*\Z \rightarrow L,$  we obtain an associated {\it period mapping} 
\begin{eqnarray*}
\t: S &\rightarrow& \Omega \\ 
s & \mapsto & \a_{\C}(\w_{X_s})
\end{eqnarray*}
where $\a_{\C}(\w_{X_s})$ is the period point of the fibre $X_s$ over $s \in S$ via the marking $\a.$ 

We mention some of the main theorems about K3 surfaces.  For details of proofs and further properties of K3 surfaces see \cite{BPV}, \cite{Beauville}, 
\cite{GH}.

\begin{theorem}\label{thm:local-torelli}[Local Torelli Theorem]
There is a universal deformation of any K3 surface $X.$  The base is smooth, of dimension $20$ and the 
period mapping is a local isomorphism at each point of the base.
\end{theorem}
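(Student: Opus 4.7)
The plan is to split the statement into two independent assertions: first, existence of a smooth universal deformation of base dimension 20, and second, that the period map $\t$ is a local isomorphism at every point. Both parts rely crucially on the triviality of the canonical bundle $K_X \simeq \O_X$ of a K3 surface, which converts questions about the tangent sheaf into questions about the cotangent sheaf, where the Hodge numbers of the preceding theorem apply directly.

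For the first assertion, I would invoke the Kuranishi theory of deformations of compact complex manifolds: there exists a versal deformation whose tangent space at the base point is $\H^1(X, T_X)$, whose obstruction space is $\H^2(X, T_X)$, and whose versal family is universal whenever $\H^0(X, T_X) = 0$. Since $K_X$ is trivial, contraction with a nowhere-vanishing holomorphic 2-form $\w_X$ gives an isomorphism of sheaves $T_X \simeq \Omega_X^1$. Combined with the Hodge numbers $h^{1,0} = 0$ and $h^{1,1}=20$, this yields $\H^0(X, T_X) \simeq \H^0(X, \Omega_X^1) = 0$ and $\dim \H^1(X, T_X) = h^{1,1} = 20$. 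Serre duality together with triviality of $K_X$ likewise gives $\H^2(X, T_X) \simeq \H^0(X, \Omega_X^1)^{\vee} = 0$. Vanishing of the obstruction space produces smoothness of the base, the computation of $\H^1$ yields the dimension 20, and vanishing of $\H^0$ promotes the versal deformation to a universal one.

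For the second assertion, I would compute the differential $d\t$ at an arbitrary base point $s_0 \in S$. The Kodaira--Spencer map identifies $T_{s_0} S$ canonically with $\H^1(X_{s_0}, T_{X_{s_0}})$. The period domain $\Omega$ is an open subset of the smooth quadric hypersurface $\{(\w,\w)=0\} \subset \P(L_{\C})$, so its tangent space at $[\w]$ is $\w^{\perp}_{\C}/\C\w$; under the marking this corresponds to $\H^{1,1}(X, \C)$, which is again 20-dimensional. Griffiths' formula identifies $d\t$ with the contraction map sending $\theta \in \H^1(X, T_X)$ to the element of $\Hom(\H^{2,0}(X), \H^{1,1}(X))$ defined by $\w_X \mapsto \theta \lrcorner \w_X$. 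Since $\w_X$ is nowhere vanishing, this same contraction realises the global sheaf isomorphism $T_X \simeq \Omega_X^1$ already used above, so $d\t$ is an isomorphism between 20-dimensional vector spaces; the inverse function theorem then upgrades this to $\t$ being a local isomorphism in a neighbourhood of $s_0$.

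The chief obstacle is the careful setup and verification of Griffiths' formula, namely that the differential of the period map coincides with the Kodaira--Spencer contraction. This requires identifying the Gauss--Manin connection on $R^2p_*\C$ with the derivative of the variation of Hodge structure, and tracking how the Hodge filtration shifts by one step under this connection. Once this framework is in place, the remaining arguments are mechanical applications of Serre duality and of the nowhere-vanishing property of the global 2-form, which is the special feature of K3 surfaces that drives the whole theorem.
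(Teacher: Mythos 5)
Your proposal follows essentially the same route as the paper's sketch: Kuranishi theory plus the contraction isomorphism $T_X \simeq \Omega^1_X$ induced by the trivialisation of $K_X$, giving unobstructedness, dimension $20$, and local injectivity of $d\t$. You supply the two details the paper leaves implicit --- the vanishing $\H^0(X,T_X)=0$ needed to upgrade versal to universal, and Griffiths' identification of $d\t$ with the Kodaira--Spencer contraction --- and both are handled correctly.
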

Roughly the idea behind the proof is as follows.
Any compact complex manifold $X$ has a "local moduli space"(Kuranishi family) 
$p: \boldsymbol{X} \rightarrow S$ parameterising small deformations of $X,$ where $X \simeq \boldsymbol{X}_{|_{s_0}} = X_{s_0}$
 for some base point $s_0 \in S.$  The base $S$ has dimension $h^1(T_X)$ and is smooth if $\H^2(X, T_X) = 0.$  
For $X$ a K3 surface, by Serre duality $\H^2(X, T_X) = \H^0(X, \Omega^1_X)^\vee = 0.$  So there is no obstruction 
to deformations and the local moduli space is smooth.  Also from the Hodge decomposition it follows that
$h^1(T_X) = h^1(\Omega^1_X) = 20.$  So $S$ has dimension $20.$  The differential $d \t (s_0)$ of the associated
 {\it period mapping} $\t : S \rightarrow \Omega$ is  locally injective.  Since $S$ and $\Omega$ have the same
 dimension, 
$\t: S \rightarrow \Omega$ is a local isomorphism.

All K3 surfaces are diffeomorphic but not necessarily isomorphic.  The question as to when two K3 surfaces are 
isomorphic is very interesting and leads to a type of Torelli theorem.

We first define the notion of a Hodge isometry and then give a criterion for when two K3 surfaces may be isomorphic.

\begin{definition}\label{def:hodge}
Let $X$ and $Y$ be surfaces.  We say that an isometry
$\H^2(X, \Z) \rightarrow \H^2(Y, \Z)$ is a {\it Hodge isometry}
if its $\C$-linear extension preserves the Hodge decomposition.
\end{definition}

\begin{theorem}[Torelli theorem]
Two K3 surfaces $X$ and $Y$ are isomorphic if and only if $\H^2(X, \Z)$ and
$\H^2(Y, \Z)$ are Hodge isometric.
\end{theorem}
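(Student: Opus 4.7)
The forward direction is immediate: an isomorphism $f: X \rtar Y$ induces a pullback $f^*$ on $\H^2(-,\Z)$ which is an isometry for the cup product and, by functoriality, preserves the Hodge decomposition on complexification. The content of the theorem is the converse, and my plan is to combine the local Torelli theorem (Theorem~\ref{thm:local-torelli}) with the classical Torelli theorem for complex tori, bridged by Kummer surfaces and the density of the Kummer locus in $\Omega$ noted in the preliminaries.

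Given a Hodge isometry $\phi: \H^2(Y, \Z) \rtar \H^2(X, \Z)$, the first step is to translate it into a coincidence of periods. Choose a marking $\a_X: \H^2(X,\Z) \rtar L$ and set $\a_Y := \a_X \circ \phi$; then $(X, \a_X)$ and $(Y, \a_Y)$ have the same period point in $\Omega$. Before proceeding, $\phi$ must be normalized by composing with reflections in $(-2)$-classes (the Weyl group acting on the positive cone of $\H^{1,1}(X,\Z)$) so that $\phi$ sends the ample cone of $Y$ to the ample cone of $X$. This adjustment does not move the period but is essential, since any isomorphism of projective K3 surfaces must preserve ampleness.

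Next I would establish the theorem for the special case of Kummer surfaces. If $X = \mathrm{Km}(T_1)$ and $Y = \mathrm{Km}(T_2)$, then $\H^2(X,\Z)$ has an orthogonal decomposition into the Kummer lattice (spanned by the sixteen exceptional $(-2)$-curves) and a primitive sublattice Hodge-isometric to $\H^2(T_1,\Z)(2)$, and similarly for $Y$. By Nikulin's characterization of the Kummer lattice, a normalized $\phi$ must respect this decomposition, hence restricts to a Hodge isometry $\H^2(T_1,\Z) \rtar \H^2(T_2,\Z)$. The classical Torelli theorem for $2$-dimensional complex tori then produces $T_1 \simeq T_2$, which descends through the Kummer construction to an isomorphism $X \simeq Y$ inducing $\phi$.

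It remains to extend from Kummer surfaces to arbitrary K3 surfaces $X, Y$ sharing a period point $p_0 \in \Omega$. Place both in marked Kuranishi families over a common neighborhood $U \ni p_0$; by Theorem~\ref{thm:local-torelli} the period maps are local isomorphisms, so periods in $U$ determine the fibres up to unique isomorphism. By density of Kummer periods there is a sequence $p_n \rtar p_0$ in $U$ along which both families have Kummer fibres, and the previous step supplies isomorphisms at each $p_n$ realizing the restriction of $\phi$. The main obstacle is this extension step: one must show that these pointwise isomorphisms globalize to $p_0$, which is the technical heart of the global Torelli theorem. It is standardly handled by a separatedness/properness argument on the moduli stack of marked K3 surfaces, using that the relative isomorphism functor between two families of K3 surfaces is representable by a scheme proper over the base, so that the limiting isomorphism over $p_0$ exists and induces $\phi$ on cohomology.
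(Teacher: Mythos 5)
The paper does not actually prove this statement: it appears in Section 2.1 as quoted classical background, with the reader referred to \cite{BPV}, \cite{Beauville} and \cite{GH}, and nothing later in the paper depends on the details of its proof. So there is no internal argument to compare yours against; I can only assess the sketch on its own terms. Its skeleton is the standard Piatetski-Shapiro--Shafarevich/Burns--Rapoport strategy (normalize the Hodge isometry, settle the Kummer case via the Torelli theorem for $2$-dimensional complex tori, propagate by density of Kummer periods together with local Torelli), and that much is correct, including the point that $\phi$ must first be adjusted to respect ample cones. One caveat on the normalization: besides reflections in $(-2)$-classes you generally also need to allow composition with $-\operatorname{id}$, since the reflections preserve each component of the positive cone while $\phi$ may swap the two components; both operations are Hodge isometries fixing the period point, so the adjustment is harmless but should be stated.

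The genuine gap is in your final step. The relative Isom functor of two \emph{unpolarized} families of K3 surfaces is not proper over the base --- failure of exactly this kind of properness is why the global Torelli theorem is hard, so you cannot simply cite it. What is true (Matsusaka--Mumford) is properness for isomorphisms of \emph{polarized} families of non-ruled surfaces, and even granting that, the Burns--Rapoport argument does not merely pass to a limit of isomorphisms: one takes the limit of the graphs $\Gamma_n \subset X_{p_n} \times Y_{p_n}$ as a cycle in $X_{p_0} \times Y_{p_0}$, shows it decomposes as $Z + \sum_i C_i \times D_i$ with the $C_i, D_i$ curves, proves that $Z$ is the graph of an isomorphism, and then verifies that the extra components do not prevent $Z$ from inducing the normalized $\phi$ on $\H^2$. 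Your phrase about a ``separatedness/properness argument on the moduli stack'' replaces precisely this analysis --- the technical heart of the theorem --- with a properness claim that is false as stated. As a roadmap to the classical proof the proposal is serviceable; as a proof it is not complete at the one place where completeness matters.
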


Any algebraic K3 surface is K\"{a}hler but in fact a much stronger result holds.

\begin{theorem}
Every K3 surface is K\"{a}hler.
\end{theorem}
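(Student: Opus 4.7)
The plan is to approximate an arbitrary K3 surface by algebraic ones using the Local Torelli theorem, and then transfer the K\"ahler property to the limit. The algebraic case is free: if $X \hookrightarrow \P^n$, the pullback of the Fubini-Study form is a K\"ahler form on $X$, so the projective examples listed above (quartics, triple intersections of quadrics, double covers of $\P^2$) are all K\"ahler.

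Next I would establish that the algebraic K3 surfaces are dense in the moduli space. A marked K3 surface $(X,\alpha)$ is algebraic exactly when $\Pic(X) \simeq \H^{1,1}(X,\Z)$ contains a class of positive self-intersection. Through the marking, $\H^{1,1}(X,\Z)$ corresponds to $\omega_X^{\perp} \cap L$, so the algebraic loci in the period domain are the divisors
$$\Omega_\lambda \;:=\; \{[\omega] \in \Omega : (\omega,\lambda)=0\}, \qquad \lambda \in L,\ (\lambda,\lambda) > 0.$$
Their countable union is dense in $\Omega$. Now, given an arbitrary K3 surface $X_0$, Theorem~\ref{thm:local-torelli} provides a smooth universal deformation $p : \boldsymbol{X} \rightarrow S$ with $X_0 = \boldsymbol{X}_{|_{s_0}}$ whose period map $\tau : S \rightarrow \Omega$ is a local isomorphism at $s_0$. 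Pulling density back through $\tau$ yields a sequence $s_n \to s_0$ with each fibre $X_{s_n}$ an algebraic, hence K\"ahler, K3 surface degenerating to $X_0$.

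The main obstacle is showing that the K\"ahler property passes to the limit. Openness of the K\"ahler condition under deformation is standard (Kodaira-Spencer), but the closedness we require here is genuinely hard. The cleanest route is Siu's original analytic argument: on each algebraic fibre $X_{s_n}$, use Yau's solution of the Calabi conjecture to choose a Ricci-flat K\"ahler metric with a prescribed, continuously varying cohomology class and normalized volume; exploit the resulting hyperk\"ahler structure (which exists because $K_{X_{s_n}}$ is trivial and $h^{2,0}=1$) to control the geometry as $n \to \infty$; and extract a convergent subsequence whose limit defines a K\"ahler form on $X_0$. An alternative, historically later, is to invoke the Buchdahl-Lamari theorem that a compact complex surface is K\"ahler iff $b_1$ is even, together with $b_1(X_0)=0$ from the cohomology theorem above. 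Either way, the topological/period-theoretic reduction is formal, while the transfer of the K\"ahler form through the limit is where the real work lies.
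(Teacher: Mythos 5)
The paper does not actually prove this theorem: it states it as a known deep result (Siu's theorem), remarks that ``the proof is quite involved,'' and refers the reader to the literature, so there is no internal argument to compare yours against. Your reduction is the historically correct one --- algebraic K3 surfaces are K\"ahler for free, and the density of the loci $\Omega_\lambda$ with $(\lambda,\lambda)>0$ in the period domain, combined with the Local Torelli theorem, does produce algebraic fibres $X_{s_n}$ converging to an arbitrary $X_0$ inside its Kuranishi family. That part of your outline is sound.

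The genuine gap is the step you yourself flag: passing the K\"ahler property to the limit. This is not a technical loose end that a compactness argument will close --- the K\"ahler condition is \emph{not} closed under degeneration in general (Hironaka's example exhibits a smooth family of projective threefolds whose central fibre is Moishezon but not K\"ahler), so ``extract a convergent subsequence of Ricci-flat metrics'' is precisely where the entire content of Siu's theorem lives, and sketching it in one sentence does not constitute a proof. The control needed on the sequence of Calabi--Yau metrics as $s_n \to s_0$ (uniform diameter and injectivity-radius bounds, non-collapsing) is exactly what fails for a general degeneration and what requires the special hyperk\"ahler structure of K3 surfaces to salvage. Your alternative route --- Buchdahl--Lamari's theorem that a compact complex surface is K\"ahler iff $b_1$ is even, plus $b_1(X)=0$ --- is logically complete, but it consists of citing a theorem at least as deep as the one to be proved. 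In short: your proposal correctly locates the difficulty and names the right references, but it reduces the statement to an external theorem rather than proving it; as a self-contained argument it has a hole exactly at the decisive step.
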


The above result is used in the proof of the following theorem.  The proof however is quite involved.

\begin{theorem}[Surjectivity of period mappping] \label{thm:k3period}
Every point of $\Omega$ occurs as the period point of some marked K3 surface.
\end{theorem}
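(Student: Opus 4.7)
The plan is to show that the image of the global period map is both open and closed in $\Omega$; since $\Omega$ is connected (it is the open subset of the smooth quadric $\{(\w,\w)=0\}$ in $\P(L_\C)$ cut out by the positivity condition, and one checks this is a connected complex manifold of dimension $20$), this gives surjectivity. The argument proceeds in three main stages, of which the last is the serious one.

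First I would use Theorem~\ref{thm:local-torelli} to obtain openness of the image. Given any marked K3 surface $(X,\a)$, its Kuranishi family has a $20$-dimensional smooth base and the local period map $\t$ is a local isomorphism onto its image. Patching the Kuranishi families of all marked K3 surfaces produces a (non-Hausdorff, in general) global moduli space $\cM_{K3}^{\mathrm{mkd}}$, and the induced global period map $\t\colon \cM_{K3}^{\mathrm{mkd}}\to\Omega$ is everywhere a local isomorphism, so its image is open.

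Second, I would establish density of the image via Kummer surfaces. For a complex $2$-torus $T=\C^2/\Lambda$, the Kummer surface $\mathrm{Km}(T)$ is a K3 surface whose transcendental lattice is a specific index-two sublattice of $\H^2(T,\Z)_{\mathrm{tr}}(2)$, and its period point is determined explicitly by the period of $T$. Varying $T$ over the Siegel upper half space $\mathfrak{H}_2$ (which parameterizes all complex $2$-tori) produces a family of period points of Kummer K3 surfaces whose closure is visibly all of $\Omega$: one uses that rational orthogonal directions are dense on the quadric $\{(\w,\w)=0\}$ and that the Kummer construction realizes every compatible transcendental lattice. Hence the image of $\t$ contains a dense subset of $\Omega$, and combined with openness it contains a dense open subset.

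Third, and this is the main obstacle, I would prove closedness of the image using Kulikov's theorem on degenerations. Given $[\w]\in\Omega$ lying in the closure of the image, choose a sequence of marked K3 surfaces $(X_n,\a_n)$ with $\t(X_n,\a_n)\to[\w]$. Using openness, one assembles (after passing to a subsequence and possibly a finite cover) a smooth family $\boldsymbol{X}^*\to\Delta^*$ over a punctured disk whose periods approach $[\w]$ radially. The convergence of periods forces the monodromy around the puncture to be trivial on $L$, hence unipotent with trivial nilpotent part. By Kulikov's semistable reduction theorem for K3 surfaces, one extends to a semistable degeneration $\boldsymbol{X}\to\Delta$ with trivial canonical bundle; trivial monodromy rules out the Type II and Type III central fibers, forcing the central fiber to be smooth, i.e., itself a K3 surface whose marked period is exactly $[\w]$. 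This step is what makes the proof deep: ruling out Type II/III degenerations via the monodromy classification and constructing the Kulikov model are both substantial results in their own right. Combining openness, closedness, and connectedness of $\Omega$ completes the proof.
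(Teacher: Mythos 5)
The paper does not actually prove this theorem: it is quoted from the literature (\cite{BPV}, \cite{Beauville}, \cite{GH}), with the explicit remark that the proof is ``quite involved'' and uses the preceding theorem that every K3 surface is K\"ahler. So the comparison must be between your sketch and the standard proofs, and there your first two steps (openness from local Torelli, density from Kummer periods) follow the classical strategy, while the third step has a genuine gap. First, a sequence of marked K3 surfaces $(X_n,\a_n)$ whose period points converge to $[\w]$ does not ``assemble'' into a holomorphic family over a punctured disk: the marked moduli space is a non-Hausdorff $20$-dimensional manifold, and the period map, though a local isomorphism, is neither proper nor a covering map, so lifting a holomorphic disk in $\Omega$ through $[\w]$ to a family of surfaces over $\Delta^*$ is essentially equivalent to the surjectivity you are trying to prove; as written the step is circular. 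Second, even granting such a family, Kulikov's semistable reduction and the Type I/II/III classification are theorems about \emph{projective} degenerations, whereas the generic point of $\Omega$ is the period of a non-projective K3 surface; Kulikov's method yields surjectivity only onto periods of polarized K3 surfaces. This is precisely why the full theorem (Todorov, Looijenga, Siu) is instead proved using the K\"ahlerness of all K3 surfaces together with degenerations of K\"ahler classes or twistor families --- the route the paper alludes to when it says the K\"ahler theorem ``is used in the proof.''

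A smaller point: the Siegel upper half space $\mathfrak{H}_2$ parameterizes principally polarized abelian surfaces (a $3$-dimensional family); for density of Kummer periods in the $20$-dimensional $\Omega$ you must use arbitrary complex $2$-tori (a $4$-dimensional family), and even then the density is a nontrivial lattice-theoretic statement about which positive $2$-planes in $L\otimes\R$ admit a compatible embedding of the Kummer lattice, not something ``visibly'' true. Your overall architecture (open, closed, connected) is sound and the first two stages are correct in outline, but the closedness step does not work as stated.
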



Since a K3 surface has trivial canonical bundle, the Riemann-Roch theorem takes on a  particularly simple form.
Let $D$ be a divisor on $X$, then by Serre duality $\H^2(\O(D)) \simeq \H^0(\O(-D)).$  For ease of notation we denote the line bundle $\O(D)$ also by $D.$  The Riemann-Roch theorem for line bundles on surfaces states that 
$$\chi(D) = h^0(D) -h^1(D) + h^0(-D) = 2 + (K_X.D+D^2)/2.$$
For K3 surfaces this simplifies to $$\chi(D) = 2 + D^2/2.$$

We now state some formulae which we refer to in the following sections.
Let $E$ be a coherent sheaf of rank $r$, with Chern classes $c_i$, on a surface $X$.  Then
\begin{equation}\label{eq:chern}
\ch E =  r + c_1 t + \frac{1}{2} (c_1^2-2c_2)t^2 
\end{equation}

\begin{equation}\label{eq:td}
 \td E = 1 + \frac{1}{2} c_1t +\frac{1}{12}(c_1^2+c_2) t^2
\end{equation}

and the Hirzebruch-Riemann-Roch formula reads
\begin{equation}\label{eq:HRR}
\chi(E) = \sum_{i} (-1)^i h^{i}(E) = [\ch E.\td_X]_2   
\end{equation}  
where the subscript $2$ denotes the degree 2 component.


\subsection{Foundations}

We use some results by Mukai on moduli spaces of sheaves on K3 surfaces (\cite{Mukai2} \cite{Mukai3}).  Two of the main theorems from \cite{Mukai2} we state here for the reader's convenience.
Let $\H^*(X,\Z)$ be the total cohomology lattice of $X$.  In 
\cite{Mukai2}, Mukai defines a symmetric bilinear form on $\H^*(X,\Z)$
which is often called the {\it Mukai pairing}.  Let 
$$a = (a^0, a^1,a^2) \in \H^*(X,\Z) \ \text{where} \ a^i \in \H^{2i}(X, \Z).$$
\begin{definition}\label{def:mukai-pairing}
The {\it Mukai pairing} on $\H^*(X, \Z)$ is defined as follows
$$(a,b) = a^1 b^1 -a^0 b^2 -a^2b^0$$ where $a^{i}b^{2-i}$ represents the cup product of $a^{i}$ with $b^{2-i}$.
\end{definition}

There is a natural weight-2 Hodge structure on the Mukai lattice
given by
\begin{eqnarray*}
\tilde{\H}^{2,0}(X,\C) & := & \H^{2,0}(X,\C) \\ 
\tilde{\H}^{0,2}(X,\C) & := & \H^{0,2}(X,\C) \\
\tilde{\H}^{1,1}(X,\C) & := &  \H^{0}(X,\C) \oplus \H^{1,1}(X,\C) \oplus \H^{4}(X,\C).\end{eqnarray*}
\begin{definition}
An element $v \in \tilde{\H}(X,\Z)$ is called a {\it Mukai vector}.  The
vector $v$ is called {\it primitive} if $v$ is not an integral multiple of any other 
element.  It is called {\it isotropic} if $(v,v)=0$.
\end{definition}

Let $E$ be a coherent sheaf on $X$.  The 
Chern character $\ch(E)$ is an element of $\H^*(X,\Z)$.

\begin{definition}\label{def:mukai-vector}
The Mukai vector associated to a sheaf $E$ on $X$ is given by 
$$v(E):=\ch(E).\sqrt{\td_X}.$$  It is an element of $\tilde{H}^{1,1}(X,\Z)$
of the form $v(E)= (v^0,v^1,v^2)$ where $v^0$ is the rank of $E$
at the generic point and $v^1$ is the first Chern class $c_1(E)$.  
\end{definition}
It follows from equations (\ref{eq:chern}) and (\ref{eq:td}) that
$$ v(E)^2 = \frac{c_1(E)^2}{2} -c_2(E) +r(E).$$
The Euler characteristic pairing for two coherent sheaves $E,F$ on $X$
is given by 
$$ \chi(E,F) = \sum (-1)^i \dim \Ext^i_X(E,F).$$
So the Riemann-Roch Theorem implies  
$$ \chi(E,F) = - (v(E),v(F)).$$

Next we introduce two notions of stability. 

\begin{definition}\label{def:stability}{\it Mumford-Takemoto Stability:}
Let $E$ be a torsion free coherent sheaf on a smooth projective variety
$X$.  Let $A$ be an ample line bundle and let
$$\mu_A(E) =\frac{c_1(E). A^{\dim X -1}}{\r(E)}$$ be the {\it slope}
of $E$.  Then $E$ is {\it $\mu$-stable} 
(respectively {\it $\mu$-semistable}) with respect to $A$, if
$ \mu_A(F) < \mu_A(E)$ (respectively $ \mu_A(F) \leq \mu_A(E)$)
for every proper coherent subsheaf $F$ of $E$.
\end{definition}

Equivalently $E$ is $\mu$-stable (respectively $\mu$-semistable),
 if $\mu_A(W) > \mu_A(E)$ (respectively  $\mu_A(W) \geq \mu_A(E)$)
for every torsion free coherent quotient $W$ of $E$.

\begin{definition}\label{def:g-stability}{\it Gieseker Stability:}
Let $E$ be a torsion free sheaf on $X$ of rank $r,$ and $A$ an
ample line bundle.  Define the {\rm normalised} Hilbert polynomial 
$$ P_{A,E}(n) = \frac{1}{r} \chi(E \otimes A^{\otimes n}).$$
Then $E$ is {\it Gieseker stable} (respectively semistable) with respect to $A$, if for all
coherent subsheaves $F$ of $E$, with $0 < \rank(F) < \rank(E)$,
we have $P_{A,W}(n) < P_{A,E}(n)$ (respectively $P_{A,W}(n) \leq P_{A,E}(n)$),
for all $n >> 0$.
\end{definition}

\begin{itemize}
\item It is important to note that the notion of stability depends on the choice 
of the ample class $A$.  However it makes no difference if one uses $A$ or $mA$ for 
some positive integer $m$.

\item In this paper, when a sheaf $E$ satisfies the conditions of Mumford-Takemoto-stability we 
say $E$ is $\mu$-stable, and when $E$
satisfies the conditions of Gieseker stability we say that $E$ is stable. 
\end{itemize}

\begin{definition}\label{def:simple}
A coherent sheaf $E$ is called {\it simple} if $\End E \simeq \C.$
\end{definition}

If $E$ is stable then $E$ is simple but the converse is not necessarily true.  We give an 
example based on 1.2.10~\cite{HuybrechtsLehn}.  Let $L_1$
and $L_2$ be two non isomorphic line bundles  on a curve of genus 2, with 
$\mu(L_1) = \mu(L_2)$.  Then since line bundles are stable, $\Hom(L_2,L_1) =0.$  
Using the Riemann-Roch formula we calculate the dimension of $\Ext^1(L_2,L_1)$:
$$ dim({\Ext^1}(L_2,L_1) = - {\chi}({L_2}^{\vee} \otimes L_1) = 1.$$
Therefore there is a non trivial extension $E$, 
$ 0 \rightarrow L_1 \rightarrow E \rightarrow L_2 \rightarrow 0.$
Clearly $E$ is semi-stable but not stable.  One can also show that $E$ is simple.

Suppose $E$ is a torsion free sheaf with positive rank.  Then we
have the following implications
$$ E \mbox{ is $\mu$-stable } \Rightarrow E \mbox{ is stable } 
\Rightarrow E \mbox{ is semistable } \Rightarrow E \mbox{ is $\mu$-semistable.}$$

Let $E$ be a semistable sheaf.  Then there is a filtration
$$E_{*}: 0=E_0 \subset E_1 \subset \ldots \subset E_n=E$$
such that every successive quotient $F_i = E_i /E_{i-1}$ is stable
 and has the same slope as $E.$  Let $v(F_i) = (r_i, l_i, s_i)$ then $s_i/r_i = s(E)/r(E).$
Such a filtration is known as the {\it JHS filtration of E.}

We follow Mukai's notation and denote by $M^\mu_X$
 (respectively $SM^\mu_X$) the set of all isomorphism classes of all
$\mu$-stable (respectively $\mu$-semistable) coherent sheaves on $X$.
The space
$M^\mu_X$ is an open subset of the moduli space $M_X$ of
stable (in Gieseker's sense) sheaves on $X$.

Let $v \in \tilde{\H}(X,\Z)$ be of Hodge type (1,1).
Let ${\Spl_X}(v)$ be the moduli space of {\it simple} sheaves on $X$ with Mukai vector $v.$
Then ${\Spl_X}(v)$ is a smooth manifold of dimension $(v,v) +2$ with a holomorphic symplectic 
two form (\cite{Mukai1}~Theorem~0.1 and Theorem~0.3). 
 Let $A$ be an ample line bundle on $X$ and let $\cM_A(v)$ be 
the moduli space of {\it stable sheaves} (with respect to A) on $X$ with Mukai vector
$v.$  Then
$\cM_A(v)$ is an open subset of ${\Spl_X}(v),$ hence, if non-empty, it is also smooth of dimension $(v,v)+2.$

Suppose $v$ is isotropic and primitive, then,  $\cM_A(v)$
is two dimensional, if it is non-empty.  Also in this case there exists a sheaf
$\cE$ on $X \times \cM_A(v)$ called a {\it quasi-universal sheaf} (Definition A.4,
Theorem A.5 \cite{Mukai2}), such that $\cE$ is flat over $X \times \cM_A(v)$
and $\cE_{|_{X \times m}} \simeq E_m^{\oplus \sigma}$ for every point $m \in 
\cM_A(v)$, where $E_m$ is representative of the isomorphism class of stable sheaves corresponding to $m$ in $\cM_A(v).$
The integer $\sigma$ does not depend on $m \in \cM_A(v)$ and is called the 
{\it similitude} of $\cE$.
  The smallest such $\sigma$ is given by 
$$\sigma_{{min}}
= \gcd\{ (w,v) \, | \, w \in \tilde{\H}^{1,1}(X,\Z) \}$$
where $v$ is the Mukai vector of $E$.  If $\sigma_{min} = 1$, then $\E$ is a 
universal sheaf and $\cM$ is a called a {\it fine moduli space}.  In general $\cM$ is not a 
fine moduli space.

Let $\cM = \cM_A(v)$, and
$\pi_X,$ $\pi_\cM$ the projections of $X \times \cM$ to $X$
and $\cM$ respectively.  Let 
$$Z_\cE = 
\left(\pi^*_X \sqrt{\td_X}.\ch(\cE^\vee).\pi^*_\cM\sqrt{\td_\cM}\right)/\sigma(\cE),$$
where $\td_X$ is the Todd class of $X.$
Then $Z_\cE$ is an algebraic cycle and induces a homomorphism which preserves
the Hodge structure
$$\begin{array}{lcll}
f_\cE : & \tilde{\H}(X,\Q) & \rightarrow & \tilde{\H}(\cM,\Q) \\
        & w & \mapsto & \pi_{M*} (Z_\cE . \pi_X^*(w)). 
\end{array}$$
 The map $f_\cE$ sends $v$ to the fundamental cocycle in $\H^4(\cM,\Z)$ 
and maps $v^\perp$ onto $\H^2(\cM,\Q) \oplus \H^4(\cM,\Q)$.  The following
 are results due to Mukai.

\begin{theorem}\label{thm:mukai}{(Theorems 1.4, 1.5, 4.9 \cite{Mukai2})}
Let $X$ be an algebraic K3 surface and $v$ a primitive isotropic vector
of $\tilde{\H}^{1,1}(X,\Z)$.  Assume that the moduli space $\cM_A(v)$ is 
non-empty and compact.  Then 
\begin{enumerate}
\item The moduli space $\M_A(v)$ is  irreducible and is a K3 surface.
\item A quasi-universal sheaf $\cE$ on $X \times \cM_A(v)$ 
exists and induces an isomorphism of Hodge structures 
$$f_\cE : \tilde{\H}(X,\Q) \rightarrow \tilde{\H}(\M_A(v),\Q)$$
that is independent of the choice of $\cE$.
\item The map
$$f_\cE: v^\perp/ \Z v \rightarrow \H^2(\cM_A(v),\Z)$$ is an isomorphism
of Hodge structures compatible with the bilinear forms on $v^\perp/\Z v$
and $\H^2(\cM_A(v),\Z)$.
\item If $\cM_A(v)$ is fine, i.e. $\sigma(\E)=1$, then $Z_\cE$ is integral 
and $f_\cE$ gives a Hodge isometry between the {\it lattices} 
$\tilde{\H}(X,\Z)$ and $\tilde{\H}(\cM,\Z)$.
\end{enumerate}
\end{theorem}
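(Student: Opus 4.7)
The plan is to follow Mukai's strategy: realise $\M_A(v)$ inside the smooth moduli space $\Spl_X(v)$ of simple sheaves, exploit the holomorphic symplectic form on it, and then use Grothendieck--Riemann--Roch applied to a quasi-universal sheaf to compute the cohomological transform explicitly.

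First I would observe that since $v$ is primitive, every $A$-semistable sheaf with Mukai vector $v$ is automatically stable (a proper semistable subsheaf would force a nontrivial divisor of $v$), hence simple. So $\M_A(v)$ embeds as an open subspace of $\Spl_X(v)$, which by Mukai's earlier result is smooth of dimension $(v,v)+2 = 2$ and carries a holomorphic symplectic two-form. Together with the compactness hypothesis this yields a smooth compact surface with trivial canonical bundle. To conclude that $\M_A(v)$ is a K3 surface and not a complex torus, one ultimately invokes the Hodge isometry of part~(3), which exhibits $\H^2(\M_A(v),\Q)$ as a sub-quotient of the K3-type Mukai lattice of $X$ and forces $b_1 = 0$. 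Irreducibility follows because primitivity of $v$ rules out nontrivial Jordan--H\"older decompositions at limit points and the moduli space is connected.

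Next I would construct the quasi-universal sheaf $\E$ on $X\times\M_A(v)$: analytic-local universal families exist by simplicity, and the gluing obstruction is a Brauer class on $\M_A(v)$ of order dividing $\sigma_{\min} = \gcd\{(w,v):w\in\tilde{\H}^{1,1}(X,\Z)\}$, so a suitable direct-sum construction (Mukai's Theorem~A.5) globalises it. Since $\ch(\E^\vee)$ is algebraic and $\sqrt{\td_X}, \sqrt{\td_\M}$ are rational characteristic classes of type $(p,p)$, the cycle $Z_\E$ is algebraic and $f_\E$ automatically preserves the Mukai Hodge structure. Base change and the identity $\chi(E,F)=-(v(E),v(F))$ then show $f_\E(v)$ is the fundamental class in $\H^4(\M_A(v),\Z)$, and for $w \in v^\perp$ the degree-zero component of $f_\E(w)$ is proportional to $(w,v)/\sigma$, hence vanishes; this yields the induced map $f_\E : v^\perp/\Z v \to \H^2(\M_A(v),\Q)$.

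The central step is to show this map is a Hodge isometry, which I would approach by computing the composition $f_{\E^\vee}\circ f_\E$ on $\tilde{\H}(X,\Q)$ using Grothendieck--Riemann--Roch on the triple product $X\times\M\times X$ and repeated base change; the outcome should equal $\pm\mathrm{id}$ modulo $\Z v$, giving injectivity and surjectivity onto $\H^2(\M,\Q)$ simultaneously, while the compatibility of bilinear forms reads off from the Euler-characteristic identity $\chi(E,F)=-(v(E),v(F))$ transported through the correspondence. For the fine case $\sigma=1$, $\E$ is a genuine universal sheaf, $Z_\E$ is integral, and the same argument upgrades to a Hodge isometry of the full integral lattices $\tilde{\H}(X,\Z)\cong\tilde{\H}(\M_A(v),\Z)$. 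The main obstacle is precisely this GRR calculation: keeping track of the square-root Todd classes through pullback, cup product, and pushforward on the triple product, and identifying the answer as $\pm$ identity modulo $v$, requires careful use of the duality $v(\E^\vee) = v(\E)^\vee$ together with the isotropy $(v,v)=0$.
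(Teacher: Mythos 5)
The first thing to note is that the paper offers no proof of this statement at all: it is quoted verbatim as Theorems 1.4, 1.5 and 4.9 of \cite{Mukai2}, so there is no in-paper argument to compare yours against. Judged on its own terms, your outline does track Mukai's published strategy (openness of $\M_A(v)$ in the symplectic manifold $\Spl_X(v)$, the quasi-universal sheaf via the Brauer obstruction of order dividing $\sigma_{\min}$, and the Grothendieck--Riemann--Roch computation on $X\times\M\times X$ showing $f_{\E^\vee}\circ f_\E=\pm\operatorname{id}$ modulo $\Z v$), but it contains one concretely false step.

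The false step is the opening claim that ``since $v$ is primitive, every $A$-semistable sheaf with Mukai vector $v$ is automatically stable (a proper semistable subsheaf would force a nontrivial divisor of $v$).'' A destabilizing subsheaf $F\subset E$ with the same normalised Hilbert polynomial need not have Mukai vector proportional to $v$; it only has to match slopes, and its Mukai vector can be a summand of $v$ without $v$ being a multiple of it. The paper itself refutes your claim: in Proposition~\ref{prop:M-noncompact} the vector $v=(2,H,2)$ is primitive (Theorem~\ref{thm:v-primitive}), yet $E=\O(f_1)\oplus\O(f_2)$ is semistable and not stable, and $\M_H(2,H,2)$ fails to be compact. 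Taken at face value, your argument would prove compactness of $\M_A(v)$ for every primitive isotropic $v$, which is false. What actually closes this gap is the compactness \emph{hypothesis} of the theorem: by Proposition 4.1 of \cite{Mukai2} (invoked in the proof of Theorem~\ref{thm:M-nonempty}), compactness of $\M_A(v)$ is equivalent to every semistable sheaf with vector $v$ being stable, so the conclusion you want is an assumption, not a consequence of primitivity. Beyond this, your sketch is a plan rather than a proof --- the identification of $f_{\E^\vee}\circ f_\E$ with $\pm\operatorname{id}$ modulo $\Z v$, and the vanishing $\H^1(\M_A(v),\O)=0$ needed to exclude the abelian-surface case, are precisely where Mukai's work lies and are only gestured at --- but as a reconstruction of the cited argument the remaining structure is sound.
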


\section{A Special Construction}\label{sec:rk2vb}

In this section we present our results related to a classical example.
As before let $X$ be a K3 surface of degree 8
in $\P^5$ and let $H$ denote its hyperplane class in $\O_X(1)$.  Then
$X$ lies on three independent quadrics and in general is a 
complete intersection.  Let $\cQ$ be the net of quadrics spanned by $Q_0,Q_1,Q_2$.  
Let $C:=V(\det \cQ)$ denote the plane sextic curve parameterising the
degenerate quadrics in the net, and let $\phi:M \ra \P^2$ be the
double cover of $\P^2$ ramified along $C$.  If the rank of the
degenerate quadrics in $\cQ$ is always 5, then $C$ is smooth.  
Conversely,
given a smooth plane sextic curve and a choice of an ineffective 
{\it theta-characteristic} $L$ on $C$, there exists a family of quadrics
$\cQ$ in $\P^5$ such that $V(\det \cQ) = C$.  So there are as many nets of 
quadrics $\cQ$ as there are theta-characteristics $L$ on $C$ with $h^0(L) =0$,
(Theorem 1, \cite{Tjurin1}).  

Now suppose that the rank of the quadrics in $\cQ$ is bigger than or 
equal to 5 and that $\rho(X)=1$.  Let $\M = \M_H(2,H,2)$ be the moduli space of sheaves
on $X$ with Mukai vector $v=(2,H,2)$, stable with respect to $H$.  Equivalently 
$\M$ is the moduli space of stable sheaves on $X$ with $c_1 = H$ and $c_2 =4$.
It follows from Mukai's Theorem (Section 2.2, Theorem~2.7), that if $\M_H(2,H,2)$ is non-empty and
and compact, then it is an 
irreducible K3 surface.  Each point in $M$ naturally corresponds to a rank 2 vector bundle on $X$
with $c_1 = H$ and $c_2 = 4$, 
\cite{Mukai3}~Example~2.2.  
We prove below(Sections 3.1.2 and 3.1.3) that in fact each of these bundles is 
$\mu$-stable with respect to $H$ and that the natural morphism $M \rightarrow \cM$ is is an {\it isomorphism}. 
  
If there are singular quadrics in the net with rank strictly less than $5$, the sextic 
curve $C$ parameterising the degenerate curves singular and hence $M$ is singular.  In that 
case $\M$ is birational to $M$.

Recall that $\M$ is a fine moduli space if and only if 
$$1 = \s_{{min}} = \gcd\{(w,v) :w \in \tilde{\H}^{1,1}(X,\Z), v=(2,H,2) \}.$$
In general $\Pic(X)=\Z H$, hence any 
$w \in \tilde{\H}^{1,1}(X,\Z)$ is of the form
$\{(a,bH,c) | a,b,c \in \Z \}$.  So $(w,v) =8b-2a-2c = 2(4b-a-c)$ and hence $\s_{min} = 2$.  
This means that if $\rho(X)=1$, $\M$ is never a fine moduli space and hence a universal sheaf never exists for the generic case.
It is clear that in order to get a fine moduli space $\rho(X)>1$ is a necessary condition.  However it is not 
sufficient as we show in Theorem~5.5 that there exist $X$ and $M$ such that 
$\rho(X) = \rho(M) = 2$ and $\M \simeq M$ but $\M$ is not a fine moduli space.

\subsection{Quadrics}
Let $X$ be the complete intersection of three quadrics $Q_0, Q_1, Q_2$ such that the rank of the 
generic quadric in the net is $5$.
Let $Q$ be a smooth quadric in the net.
Then $Q$ is isomorphic to $\Gr(2,4),$ the Grassmannian of two dimensional
vector subspaces of $\C^4$, (equivalently the variety of lines in $\P^3$).
The homology of $Q$ is given by Schubert cycles $\s_1,\s_2,\s_{1,1},\s_{2,1}$.
The cycle $\s_1$ is given by the hyperplane sections of $Q,$ the cycles
$\s_2$ and $\s_{1,1}$ correspond to the two distinct families of projective
planes contained in $Q,$ and the cycle
$\s_{2,1}$ is given by lines in $Q$.  For 
details on Grassmanians, Chern classes of universal bundles, and Quadrics, see \cite{GH} Chapters 1, 3 and 6 respectively. 

For the reader's convenience we give below the intersection product on $\H_*(Q,\Z)$.
$$ \s_1^2 = \s_2 + \s_{1,1}$$
$$ \s_1 . \s_2 = \s_1 . \s_{1,1} = \s_{2,1}$$
$$ \s_2.\s_2 = \s_{1,1}.\s_{1,1} = \s_1 . \s_{2,1} =1$$
$$ \s_2. \s_{1,1} = 0.$$

\subsubsection{Spinor bundles}\label{subsec:spinor}
Let $Q \subset \P^5$ be a smooth four dimension quadric and let 
$$ 0 \rightarrow S \rightarrow \O_Q^4 \rightarrow F \rightarrow 0 $$
be the universal exact sequence given by choosing an isomorphism
$Q \simeq \Gr(2,4)$.  The bundles $S$ and $F^\vee$ are {\it spinor bundles} on
$Q$ since they arise
from spin representations of $SO(6, \C)$ \cite{Ottaviani}.  

The next fact is a combination of Theorems 2.3 and 2.8 of \cite{Ottaviani}.
\begin{proposition} \label{Ovanish}
Let $S,F^\vee$ be spinor bundles on $Q$, a smooth quadric of dimension 4, then
$$H^i(Q,S(t)) =0 \mbox{ for } 0<i<4, \mbox{ for all } t \in \Z$$
$$H^0(Q,S(t)) = 0 \mbox{ for all } t \leq 0, \quad \quad h^0(Q,S(1)) = 4,$$
and identical results for $F^\vee$ in place of $S$.  Also
$$ S^\vee \simeq S(1) \quad \quad F^\vee \simeq F(-1).$$
\end{proposition}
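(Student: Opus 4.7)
I would derive each clause from the tautological exact sequence on $Q \simeq \Gr(2,4)$; as noted the result compiles Ottaviani's Theorems 2.3 and 2.8, so the task is to outline a route. The duality isomorphisms come first: both $S$ and $F$ have rank two, so $E^\vee \simeq E \otimes (\det E)^{-1}$ holds for each. The tautological sequence gives $\det S \otimes \det F = \O_Q$, and the Pl\"ucker embedding identifies $\O_Q(1)$ with $\det F$; hence $\det S = \O_Q(-1)$ and $\det F = \O_Q(1)$, yielding $S^\vee \simeq S(1)$ and $F^\vee \simeq F(-1)$.

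For the vanishings $H^i(Q, S(t)) = 0$ with $0 < i < 4$, the plan is to feed the standard vanishing $H^i(Q, \O_Q(t)) = 0$ for $0 < i < 4$ and all $t$ (which follows from the resolution $0 \to \O_{\P^5}(-2) \to \O_{\P^5} \to \O_Q \to 0$ combined with the cohomology of line bundles on $\P^5$) into the long exact sequences obtained by twisting the tautological sequence and its dual $0 \to F^\vee \to \O_Q^4 \to S^\vee \to 0$. Each sequence yields isomorphisms $H^i(S(t)) \simeq H^{i-1}(F(t))$ and $H^i(F^\vee(t)) \simeq H^{i-1}(S^\vee(t))$ for $2 \leq i \leq 3$, together with short exact sequences in the low degrees. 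The previous step lets one rewrite $F^\vee(t) = F(t-1)$ and $S^\vee(t) = S(t+1)$, so the two families of isomorphisms combine and collapse every intermediate cohomology group into an $H^0$ or $H^4$ at a nearby twist, which must then vanish in the claimed range. A cleaner alternative is to invoke Borel--Weil--Bott on the homogeneous space $Q = \mathrm{SU}(4)/P$, where $S$ and $F^\vee$ appear as irreducible spinor bundles and the computation becomes mechanical; this is essentially the route in \cite{Ottaviani}.

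For the $H^0$ statements, the long exact sequence attached to $0 \to S(t) \to \O_Q(t)^4 \to F(t) \to 0$ gives $H^0(S(t)) = 0$ for $t < 0$ since $H^0(\O_Q(t)) = 0$; for $t = 0$ it reduces to showing that the evaluation map $\C^4 = H^0(\O_Q^4) \to H^0(F)$ is an isomorphism, which holds because $F$ is the universal rank-two quotient on $\Gr(2,4)$ and the four coordinate sections already span $H^0(F)$. For $h^0(S(1)) = 4$, the dualized sequence together with the vanishings $H^0(F^\vee) = H^0(F(-1)) = 0$ and $H^1(F^\vee) = 0$ from the previous paragraph give $h^0(S^\vee) = 4$, and $S^\vee \simeq S(1)$ concludes. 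The parallel statements for $F^\vee$ follow by the same arguments with the roles of the two spinor bundles swapped.

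The main obstacle is the bookkeeping in the middle step: verifying that the bootstrap between the two tautological sequences really terminates for every pair $(i, t)$ with $0 < i < 4$, so that one does not end up chasing a cohomology group off to infinity without ever pinning it to an $H^0$ or $H^4$ one can evaluate. The homogeneous-space viewpoint via Borel--Weil--Bott bypasses this concern at the cost of heavier representation-theoretic machinery, and is the approach I would ultimately adopt if the details of the bootstrap became unwieldy.
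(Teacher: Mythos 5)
The paper does not prove this proposition at all: it is stated verbatim as ``a combination of Theorems 2.3 and 2.8 of \cite{Ottaviani}'' and the burden is carried entirely by that citation. Your proposal therefore does strictly more than the paper, and most of it is sound. The duality step is correct: for the rank-two bundles $S$ and $F$ one has $E^\vee \simeq E \otimes (\det E)^{-1}$, and $\det F = \O_Q(1)$, $\det S = \O_Q(-1)$ from the tautological sequence and the Pl\"ucker embedding, giving $S^\vee \simeq S(1)$ and $F^\vee \simeq F(-1)$. The $H^0$ statements are also handled correctly, provided the intermediate vanishing is in hand first (e.g.\ $H^0(F(-1)) \hookrightarrow H^1(S(-1))$, so the order of the argument matters, as you have it). The one soft spot is exactly the one you flag, and it is slightly worse than ``bookkeeping'': the two long exact sequences do collapse $H^2$ and $H^3$ down to $H^1$ of a nearby twist, but $H^1(S(t))$ is then identified with the cokernel of the evaluation map $H^0(\O_Q(t))^{\oplus 4} \to H^0(F(t))$, not with an $H^0$ or $H^4$ one can read off; surjectivity of those evaluation maps for all $t$ is an extra input that the exact sequences alone do not supply (it is immediate only at $t=0$, where $H^0(F)\simeq \C^4$). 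So the purely elementary bootstrap does not close as written, and your fallback to Borel--Weil--Bott on $Q = \Gr(2,4)$ is the right move --- it is in substance how Ottaviani proves Theorems 2.3 and 2.8, so with that step made explicit your argument reconstructs the cited result rather than merely quoting it.
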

 
The homology class of $X$ in $\H^*(Q,\Z)$ is $(2\s_1)(2\s_1) = 4\s_1^2 = 4(\s_2 + \s_{1,1})$.
Let $\{\sigma_{i,j}^*\}$ denote the cohomology basis that is 
the Poincar\'e dual  of the homology basis
$\{\sigma_{i,j}\}$.  
Consider the bundles $S^\vee$ and $F.$  Then, as in the Gauss-Bonnet
Theorem I of \cite{GH} p. 410,
$$\rank(S^\vee)=\rank(F)=2,$$
$$c_1(S^\vee) = c_1(F) = \s_1^*$$
$$c_2(S^\vee) = \s_{1,1}^* \quad c_2(F) = \s_2^*.$$
So we see that 
$$c_1(S^\vee_{|_X}) = c_1(\O_X(1)) = c_1(F_{|_X})$$
$$c_2(S^\vee_{|_X}) = (\s_{1,1}^*,4(\s_2+\s_{1,1})) = 4$$
$$c_2(F^\vee_{|_X}) = (\s_2^*, 4(\s_2+\s_{1,1})) = 4.$$
Hence ${S^\vee_{|_X}}$ and $F_{|_X}$ are non-isomorphic bundles, both of which 
have Mukai vectors $(2,H,2)$. 

These bundles are in fact also $\mu$-stable as we show after some preliminary
results.  First we state a proposition which gives a criterion for determining
when two vector bundles are isomorphic.

\begin{proposition}
Let $V_1,V_2$ be stable vector bundles on a smooth projective variety.  
Suppose that
$V_1$ and $V_2$ have the same slope.  Then
$$h^0(V_1 \otimes V_2^\vee) = \left\{ \begin{array}{ll}
1 & \mbox{if } V_1 \simeq V_2 \\
0 & \mbox{if } V_1 \simeq\!\!\!\!\!{/} \,\, V_2. \end{array} \right.$$
\end{proposition}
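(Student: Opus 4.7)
My plan is to rewrite the cohomology group as a Hom space and then use the standard fact that morphisms between stable sheaves of equal slope are either zero or isomorphisms.

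First I would note that since $V_2$ is locally free, we have the identification
$$h^0(V_1 \otimes V_2^\vee) = \dim \Hom(V_2, V_1).$$
So it suffices to show that $\Hom(V_2, V_1)$ is one-dimensional when $V_1 \simeq V_2$ and zero otherwise. The first case is immediate: a stable sheaf is simple (as stated in the excerpt just after Definition 2.12), so $\End(V_1) \simeq \C$ and the claim follows from choosing an isomorphism $V_1 \simeq V_2$ to identify $\Hom(V_2, V_1) \simeq \End(V_1)$.

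For the harder direction, suppose $\phi : V_2 \to V_1$ is a nonzero morphism, and I would derive that $\phi$ must be an isomorphism. Let $I = \operatorname{image}(\phi) \subset V_1$, which is torsion-free of some rank $s$ with $1 \leq s \leq \min(\operatorname{rank} V_1, \operatorname{rank} V_2)$. Then $I$ is both a nonzero torsion-free quotient of $V_2$ (via the surjection $V_2 \twoheadrightarrow I$) and a nonzero coherent subsheaf of $V_1$. If $s < \operatorname{rank}(V_2)$, the equivalent formulation of $\mu$-stability in terms of quotients forces $\mu_A(I) > \mu_A(V_2)$; if $I$ is a proper subsheaf of $V_1$, then $\mu$-stability of $V_1$ forces $\mu_A(I) < \mu_A(V_1)$. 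Since $\mu_A(V_1) = \mu_A(V_2)$, any proper inclusion $I \subsetneq V_1$ or proper quotient $V_2 \twoheadrightarrow I$ leads to a contradiction (note: in the case where $I \subsetneq V_1$ has the same rank as $V_1$, the torsion quotient $V_1/I$ has effective, nonzero first Chern class, so $\mu_A(I) < \mu_A(V_1)$ still holds). Therefore $\phi$ must be injective with $I = V_1$, i.e.\ $\phi$ is an isomorphism of sheaves, hence of vector bundles.

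Consequently, if $V_1 \not\simeq V_2$, no nonzero $\phi$ exists and $h^0(V_1 \otimes V_2^\vee) = 0$. The only real obstacle in the argument is the boundary case $\operatorname{rank}(I) = \operatorname{rank}(V_1)$, which I would handle as above by observing that a proper subsheaf of the same rank still has strictly smaller $\mu_A$-slope because the torsion quotient contributes a nonzero effective class to $c_1$. The rest is a direct application of the two stability inequalities and the simplicity of stable sheaves.
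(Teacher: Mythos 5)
Your overall strategy is the standard one (the paper itself does not prove this proposition; it simply cites Friedman, Ch.~4, Prop.~7 and Huybrechts--Lehn 1.2.7--1.2.8, and your argument is essentially the one found there), and everything up to the boundary case is fine. But the step you single out as ``the only real obstacle'' is handled incorrectly. You claim that if $I \subsetneq V_1$ has full rank, then the torsion quotient $V_1/I$ has \emph{nonzero} effective first Chern class, so that $\mu_A(I) < \mu_A(V_1)$ still holds. That is false in general: a torsion sheaf supported in codimension $\geq 2$ (for instance a finite-length sheaf on a surface) has $c_1 = 0$, and then $\mu_A(I) = \mu_A(V_1)$ and no slope contradiction arises. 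This is exactly the dangerous case, not a technicality: for torsion-free sheaves the proposition is simply \emph{false} ($I_Z \hookrightarrow \O_X$ is a nonzero map between non-isomorphic rank-one stable sheaves of the same slope), so any correct proof must invoke local freeness precisely at this point, and your argument never does.

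The gap is easy to repair, but it requires a different idea. If $\phi \colon V_2 \to V_1$ is injective and $\operatorname{rank} V_1 = \operatorname{rank} V_2 = r$, consider $\det\phi = \wedge^r\phi$, a nonzero section of the line bundle $L = \det V_1 \otimes (\det V_2)^\vee$. Since $\mu_A(V_1)=\mu_A(V_2)$ we have $c_1(L)\cdot A^{\dim X - 1} = 0$, so the effective zero divisor of $\det\phi$ pairs to zero with an ample class and is therefore empty; hence $\phi$ is an isomorphism on every fibre, i.e.\ an isomorphism of bundles. (Alternatively: a cokernel of an injection of locally free sheaves has projective dimension $\leq 1$, so by Auslander--Buchsbaum it cannot be a nonzero sheaf supported in codimension $\geq 2$ on a smooth variety.) With that replacement, the rest of your case analysis goes through and the proof is complete.
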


 The proof is in \cite{Friedman} Chapter 4, Proposition 7,
or \cite{HuybrechtsLehn}, Propositions 1.2.7 and Corollary 1.2.8.

\begin{corollary}\label{OvanishU}
Let $Q,Q'$ be a pair of distinct smooth quadrics in $\P^5$.  Let $S$
be a spinor bundle on $Q$ and let $U=Q \cap Q'$.  Then
$$ H^0(U,S_{|_U}(t)) = 0 \mbox{ for } t \leq 0 \quad 
\quad h^0(U,S_{|_U}(1))=4$$
$$ H^2(U,S_{|_U}(t))=H^1(U,S_{|_U}(t))=0 \mbox{ for all } t$$
and identical results for $F^\vee$ in place of $S$. Also
$$ S^{\vee}_{|_U} \simeq S(1)_{|_U} \quad \quad F^{\vee}_{|_U} \simeq F(-1)_{|_U}.$$
\end{corollary}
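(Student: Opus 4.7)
The plan is to reduce every statement about $U = Q \cap Q'$ to the corresponding statement on $Q$ by means of the Koszul resolution of $\O_U$ as an $\O_Q$-module, and then feed in Proposition~\ref{Ovanish}. Concretely, since $Q'$ is a quadric hypersurface in $\P^5$ distinct from $Q$, the defining equation of $Q'$ restricts to a non-zero section of $\O_Q(2)$ whose vanishing locus is exactly $U$; moreover $Q$ is integral, so this section is a regular element. Tensoring the resulting structure sequence $0 \to \O_Q(-2) \to \O_Q \to \O_U \to 0$ with the locally free sheaf $S(t)$ gives
$$
0 \longrightarrow S(t-2) \longrightarrow S(t) \longrightarrow S_{|_U}(t) \longrightarrow 0,
$$
and I will run the associated long exact sequence in cohomology for each $t \in \Z$.

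First I would handle the higher cohomology. For $i=1$, the adjacent terms in the long exact sequence are $H^1(Q, S(t))$ and $H^2(Q, S(t-2))$, both of which vanish by Proposition~\ref{Ovanish}; hence $H^1(U, S_{|_U}(t)) = 0$ for every $t$. For $i=2$, the neighboring terms are $H^2(Q, S(t))$ and $H^3(Q, S(t-2))$, again both vanishing by Proposition~\ref{Ovanish}, so $H^2(U, S_{|_U}(t)) = 0$ for every $t$. For $H^0$, the vanishing $H^1(Q, S(t-2)) = 0$ (valid for all $t$ by the same proposition) yields the short exact sequence
$$
0 \longrightarrow H^0(Q, S(t-2)) \longrightarrow H^0(Q, S(t)) \longrightarrow H^0(U, S_{|_U}(t)) \longrightarrow 0.
$$
For $t \leq 0$, both of the groups on $Q$ vanish, giving $H^0(U, S_{|_U}(t)) = 0$. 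For $t=1$, we have $H^0(Q, S(-1)) = 0$ and $h^0(Q, S(1)) = 4$, hence $h^0(U, S_{|_U}(1)) = 4$.

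The two isomorphisms in the last line are purely formal: restriction of the isomorphisms $S^\vee \simeq S(1)$ and $F^\vee \simeq F(-1)$ from Proposition~\ref{Ovanish} to the subvariety $U \subset Q$ gives $S^\vee_{|_U} \simeq S(1)_{|_U}$ and $F^\vee_{|_U} \simeq F(-1)_{|_U}$ at once. Finally, the entire argument is symmetric in the two spinor bundles, since Proposition~\ref{Ovanish} provides the identical vanishing for $F^\vee$, so the same Koszul sequence and long exact sequence analysis apply verbatim with $F^\vee$ in place of $S$.

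There is essentially no serious obstacle; the only point that requires attention is verifying that $Q'$ defines a regular section of $\O_Q(2)$, which is immediate because $Q$ is an integral smooth fourfold and $Q \neq Q'$, so no component of $Q$ is contained in $Q'$. Everything else is a direct diagram chase through the long exact sequence, using Proposition~\ref{Ovanish} as a black box.
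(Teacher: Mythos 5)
Your proof is correct. For the cohomology statements you follow exactly the paper's route: the Koszul/structure sequence $0 \to S(t-2) \to S(t) \to S(t)_{|_U} \to 0$ on $Q$ together with the vanishings of Proposition~\ref{Ovanish}, and your bookkeeping of the long exact sequence (including the justification that the equation of $Q'$ is a regular section of $\O_Q(2)$, which the paper leaves implicit) is accurate. Where you genuinely diverge is the final pair of isomorphisms: you obtain $S^\vee_{|_U} \simeq S(1)_{|_U}$ by simply restricting the isomorphism $S^\vee \simeq S(1)$ on $Q$ to $U$, whereas the paper instead runs two further long exact sequences to show $h^0(U, S^\vee \otimes S(1)_{|_U}) = 1$, checks that the two bundles have equal slope on $U$, and then invokes Proposition~3.2 (the Hom criterion for stable bundles of equal slope). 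Your route is shorter and, taking Proposition~\ref{Ovanish} at face value, entirely sufficient; it also sidesteps the fact that the paper's appeal to Proposition~3.2 presupposes stability of $S^\vee_{|_U}$ and $S(1)_{|_U}$ on the threefold $U$, which has not been established at that point in the text (stability is only proved later, and only for the restriction to $X$). The paper's longer argument would be the one to fall back on if one wanted a proof independent of the global self-duality of the spinor bundle on $Q$, but as written your version is cleaner.
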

\begin{proof}
This follows the cohomology long exact sequence associated to the short
exact sequence 
$$ 0 \rightarrow S(-2+t) \rightarrow S(t) \rightarrow S(t)_{|_U} \rightarrow 0.$$ and the above Proposition.

To show that $S^{\vee}_{|_U} \simeq S(1)_{|_U}$ we take the short exact sequence

$$ 0 \rightarrow S \rightarrow \O_Q^4 \rightarrow F \rightarrow 0,$$
and tensor with $S(-1).$  The associated long exact sequence in cohomology shows that
$h^0(S \otimes S(-1) = h^1(S \otimes S(-1))=0$.  

Then we consider the first short exact sequence again for $t = 0$ and tensor with $S(1)$.  The associated long exact 
sequence in cohomology shows that $h^0(U, S \otimes S(1)) = 1$. 
Since $U$ is a complete intersection.
$c_1(S^{\vee})_{|_U} = c_1(S(1))_{|_U}$, hence
$\mu(S^\vee)_{|_U} = \mu (S(1))_{|_U}$.  
It now follows from Proposition~3.2 above that $S^\vee_{|_U} \simeq S(1)_{|_U}$.
\end{proof}

We now prove that $S^{\vee}$ and $F$ are $\mu$-stable. 

\begin{corollary}  Consider a net generated by three
smooth quadrics $Q_1,Q_2,Q_3$ in $\P^5$ with base locus a complete intersection
surface $X$.  Let $S$ be a spinor bundle on a smooth quadric in the
net. 
\begin{enumerate}
\item Then 
$$ H^0(X,S_{|_X}(t)) = 0 \mbox{ for } t \leq 0 \quad 
\quad H^0(X,S_{|_X}(1))=4$$
$$ H^1(X,S_{|_X}(t))=0 \mbox{ for all } t$$
and identical results for $F^\vee$ in place of $S$

\item For the generic K3 surface $X$, $S^{\vee}_{|_X}$ and $F_{|_X}$ are $\mu$-stable 
with respect to the polarisation $\O_X(1)$.
\end{enumerate}
\end{corollary}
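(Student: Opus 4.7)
The plan is to mirror the proof of Corollary~\ref{OvanishU} one step further, pushing the vanishing from the threefold $U = Q_1 \cap Q_2$ down to the K3 surface $X = U \cap Q_3$, and then deduce $\mu$-stability from the vanishing.

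For part (1), I would write $X$ as a divisor of class $\O_U(2)$ inside the smooth threefold $U$ and twist the associated ideal sheaf sequence by $S_{|U}(t)$ to obtain
$$0 \rightarrow S_{|U}(t-2) \rightarrow S_{|U}(t) \rightarrow S_{|X}(t) \rightarrow 0.$$
By Corollary~\ref{OvanishU} we already have $H^1(U,S_{|U}(s)) = H^2(U,S_{|U}(s)) = 0$ for every $s$, so the long exact sequence immediately gives $H^1(X,S_{|X}(t)) = 0$ for all $t$, and identifies $H^0(X,S_{|X}(t))$ with the cokernel of $H^0(U,S_{|U}(t-2)) \rightarrow H^0(U,S_{|U}(t))$. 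For $t\leq 0$ both groups vanish by Corollary~\ref{OvanishU}, giving $H^0(X,S_{|X}(t))=0$; for $t=1$ the first group is $0$ and the second is $4$-dimensional, so $h^0(X,S_{|X}(1))=4$. The same argument, verbatim, handles $F^\vee$ in place of $S$.

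For part (2), assume $\Pic(X)=\Z H$, so that $H^2=8$ and $\mu(S^\vee_{|X}) = \mu(F_{|X}) = H^2/2 = 4$. Any torsion-free rank-one subsheaf of $S^\vee_{|X}$ is of the form $\O_X(m)\otimes I_Z$ for some $m\in\Z$ and a zero-dimensional $Z$, with slope $8m$; $\mu$-stability therefore amounts to excluding $m\geq 1$, i.e.\ to showing $\Hom(\O_X(m),S^\vee_{|X}) = H^0(X,S^\vee_{|X}(-m)) = 0$ for every $m\geq 1$. Using the isomorphism $S^\vee_{|X}\simeq S(1)_{|X}$ (restrict the identification proven on $U$ in Corollary~\ref{OvanishU}), this reduces to $H^0(X,S_{|X}(1-m)) = 0$ for $m\geq 1$, which is exactly part (1). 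The argument for $F_{|X}$ is symmetric, using $F^\vee_{|X}\simeq F(-1)_{|X}$: a destabilising subsheaf $\O_X(m)\hookrightarrow F_{|X}$ with $m\geq 1$ would yield a nonzero section of $F^\vee_{|X}(1-m)$, contradicting the analogue of part (1) for $F^\vee$.

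The main obstacle is really a bookkeeping one: one must verify that the vanishing of $H^1$ and $H^2$ on $U$ established in the previous corollary is strong enough to both kill the connecting maps and allow the $H^0$-computation via the twist by $\O_U(-2)$; once that is in place, the passage to stability is purely formal because $\Pic(X) = \Z H$ reduces the test to rank-one subsheaves, and the hard analytic input (the spinor-bundle vanishing of \cite{Ottaviani}) has already been absorbed.
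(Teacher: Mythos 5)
Your proof is correct. Part (1) follows exactly the route the paper intends (the paper only says the argument is ``similar to that of Corollary~\ref{OvanishU}''): twist the ideal-sheaf sequence of the divisor $X \in |\O_U(2)|$ by $S_{|_U}(t)$ and use the vanishing of $H^1$ and $H^2$ on $U$ to compute $H^0$ and $H^1$ on $X$; your bookkeeping there is accurate. For part (2), however, you take a genuinely different path. The paper saturates a destabilising subsheaf to a line bundle $L=\O_X(kH)$, uses left-exactness of global sections to get $h^0(L)\leq h^0(S^\vee_{|_X})=4$, and then applies Riemann--Roch to show that $k\geq 1$ would force $h^0(L)\geq 2+4k^2\geq 6$, a contradiction; so only the value $h^0(S^\vee_{|_X}(1))=4$ from part (1) is used. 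You instead observe that $\Hom(\O_X(m)\otimes I_Z, S^\vee_{|_X})=H^0(X,S^\vee_{|_X}(-m))$ and invoke the self-duality $S^\vee\simeq S(1)$ to identify this with $H^0(X,S_{|_X}(1-m))$, which vanishes for $m\geq 1$ by the full strength of part (1). Your version is more direct (no Riemann--Roch, no effectivity argument, and the reduction to sub-line bundles is built into the $\Hom$ computation rather than quoted from Friedman), at the cost of leaning on the twisted vanishing for all $t\leq 0$ and on the isomorphism $S^\vee_{|_X}\simeq S(1)_{|_X}$, both of which are indeed available from Proposition~\ref{Ovanish} and Corollary~\ref{OvanishU}. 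Either argument is complete; yours arguably makes clearer why stability is forced by the spinor-bundle vanishing alone.
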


\begin{proof}
The proof of the first claim is similar to that of 
Corollary~\ref{OvanishU}.

We now prove part 2. As before let $H$ denote the hyperplane class in $\O_X(1)$.
  Let $L$ be a line bundle 
with an inclusion map into $S^\vee_{|_X}$.  For a generic K3 $X$ as above, 
$\Pic(X)$ has rank $1$
so $L$ is of the form $\O_X(kH)$ for some $k \in \Z$.
Since taking sections is left exact we 
get $h^0(L) \leq h^0(S^\vee_{|_X})=4$.  Now $\mu(S^\vee_{|_X}) = H.H/2=4$ and $\mu(L) = 8k$.
By the Riemann-Roch theorem for line bundles on a surface we get 
$$ \chi(L) = 2 + L^2/2 =2+4k^2.$$
If $k \geq 1$ then $L$ is effective and $h^2(L)=h^0(-L) =0$ so 
$$h^0(L) \geq 2+4k^2 \geq 6$$
Since $h^0(L) \leq 4$ we see that $k \leq 0$ so $\mu(L) \leq 0$ and therefore
$S^\vee_{|_X}$ is $\mu$-stable with respect to $H$.  It is enough to check sub-line bundles
by Lemma 5, Chapter 4 of \cite{Friedman}.
A similar argument proves $\mu$-stablity for $F$. 
\end{proof}

So far we have shown the existence of these bundles for smooth $Q$.  In 
fact even for $Q$ singular we get vector bundles with these invariants.
Let $Q$ be a quadric in the net and for each $x \in X$ consider
the variety $T_xQ \cap Q$ where $T_xQ$ is the projective tangent
space to $Q$ at $x$.  This is a singular quadric consisting of the lines
in $Q$ passing through $x$.  Let $\Gamma \simeq \P^3$ be a linear
space in $T_xQ$ disjoint
from $x$.  Then $T_xQ \cap Q$ is a cone over a quadric
surface $Q'$ in $\Gamma$.  The quadric $Q'$ is smooth if $Q$ is smooth,
and is
 a cone over a smooth plane conic if $Q$ is singular with rank 5.
\subsubsection{Smooth Quadrics}
  Consider first 
the case when $Q$ is smooth.  Then $Q'$ is smooth and contains two families 
of lines.  So $T_xQ \cap Q$ is spanned by two families of planes.  These 
correspond to one dimensional families of Schubert cycles $\sigma_2$ and 
$\sigma_{1,1}.$  
Given a point $p \in \P^3$ we write $\sigma_2(p)$ for the plane
in $G(2,4)$ parameterizing lines in $\P^3$ that contain $p$.
Given a hyperplane $h \subset \P^3$ we write $\sigma_{1,1}(h)$ for
the plane in $G(2,4)$ parameterizing lines in $\P^3$ that are
contained in $h$. 
Since $Q$ is isomorphic to $\Gr(2,4)$ via the Pl\"ucker
embedding, $x$ corresponds to a two dimensional vector space $S_x$
in $\C^4$.  Let $\l_x = \P(S_x)$ denote its projectivisation in $\P^3$, then
$$ T_xQ \cap Q = \bigcup_{p \in \l_x} \s_2(p) = \bigcup_{h \supseteq \l_x}
\s_{1,1}(h).$$
The planes from opposite families meet in a line, while those from the
same family meet in the point $x$.  These families of planes embed in $\P^{19}$
as two disjoint conic curves  via the Pl\"ucker embedding of $\Gr(3,6)$,
planes in $\P^5$.

Let 
$$ I_Q = \{ (x, \Lambda) \in X \times \Gr(3,6) \, | \, x \in \Lambda 
\subset T_xQ  \cap Q\}.$$
Then $I_Q$ is isomorphic to the disjoint union of two conic bundles $I_Q^1,
I_Q^2$ on $X$.  

Since 
$$\P(S^\vee_x) \simeq \bigcup_{p \in \l_x} \sigma_2(p)$$
and $$\P(F_x) \simeq \bigcup_{h \supset \l_x} \sigma_{1,1}(h),$$
it follows that these conic bundles are isomorphic to $\P(S^\vee_x)$ and 
$\P(F_x).$
\subsubsection{Singular Quadrics}
When $Q$ is singular, $Q'$ is  a cone over a plane conic and 
hence contains only one family of lines.  So $Q$ contains 
only one family of planes and $I_Q$ embeds in $\P^{19}$ as a conic
bundle on $X$.  The two families of planes in the smooth
quadrics degenerate into the single family of planes in the
singular quadrics.  For $\lambda \in \P^2$, let $Q_\lambda$ 
denote the corresponding
quadric in the net $\cal{Q}$.  Let 
$I \subset X \times \P^2 \times \Gr(3,6)$ be defined by
\begin{equation*} I =\{ (x,\lambda,\Lambda) \, | \, x \in \Lambda \subset 
 T_xQ_\lambda \cap Q_\lambda  \}.
\end{equation*}
Then via the Pl\"ucker embedding of $\Gr(3,6)$ we see that $I$ is a conic
bundle on $X \times M$.  (Recall that $\phi : M \rightarrow \P^2$ is the 
double cover of $\P^2$ branched along $C=V(\det \cal{Q})$.)

In general $I$ is not isomorphic to the projectivisation of a
rank two vector bundle on $X \times M$ 
and this corresponds to the fact that $M$ is a
non-fine moduli space.  However for each $m \in M$, the restriction
$I_{|_{X\times m}}$ does lift to a vector bundle.  We sketch the argument.

A conic bundle lifts to a vector bundle if and only if it has a section.
Fix a line $\l$ in $Q$.  Generically $\l \cap X = \emptyset.$
Then for each $x\in X$ where $x \notin \l$, the line $\l$ meets $T_xQ$
 in exactly one point $p_x$.  There exists a {\it unique plane} $\Lambda_x$
in $T_xQ \cap Q$ {\it in each family}
 such that $\Lambda_x \supset \overline{x p_x},$
where $\overline{x p_x}$ is the line joining the points $x$ and $p_x$.
Note that if $Q$ is singular there is only one such plane $\Lambda$.
Since $T_xQ$ is a holomorphically varying family of hyperplanes this defines
holomorphic sections of $I_Q^1$ and $I_Q^2$ if $Q$ is smooth, and 
a holomorphic section of $I_Q$ if $Q$ is singular.

In terms of Schubert cycles of a smooth quadric we have the following 
description of the section
$$ \l = \s_{2,1}(p_0,h_0) = \{ \l \subset \P^3 \, | \, \l \ni p_0, \l \subset h_0 \}$$
for some $p_0,h_0.$  Since for $x \notin \l$, there is a {\it unique point
$q_x$} such that $\l_x \cap h_0 = q_x$,
so $\overline{p_0q_x}$ is the {\it unique line} contained in $h_0$ that meets
$\l_x$.  Let $p_x$ be the image of this line via the Pl\"ucker embedding
of $\Gr(2,4)$.  Then $\l \cap T_xQ \cap Q=p_x$ and $\s_2(q_x)$ is the unique
plane among the family of Schubert cycles $\s_2$,
containing $p_x$ while $h_x)$ (where $h_x$ is the plane spanned
by $\l_x$ and $q_x$) is the unique Schubert cycle of type $\s_{1,1}$ containing
$p_x$.  As $x$ varies we have maps
\begin{eqnarray*}
s^1: X \rightarrow I^1_Q & &  s^2: X \rightarrow I^2_Q \\
x \mapsto \s_2(q_x) & &  x \mapsto \s_{1,1}(h_x) 
\end{eqnarray*}
giving sections $s^1$ and $s^2$ of $I^1_Q$ and $I^2_Q$ (respectively).


If we restrict attention to a pencil of quadrics in the net then the family of isotropic planes in that pencil corresponds to a double cover of $\P^1$ branched at six points.  The 6 points correspond to the degenerate quadrics in the 
pencil. It is a genus two curve which we denote by $h.$  Let $U$ denote the base locus of the pencil.  Then the same construction as before gives a conic bundle $I_{|_{U \times h}}$ which lifts to a vector bundle and is a universal family \cite{Newstead}.  In particular the restriction $I_{|_{X \times h}}$ lifts to a rank two vector bundle.  It follows that the
Chern invariants of the vector bundle arising from the singular quadric 
are the same as of those coming from the smooth quadrics. 

Next we show that different quadrics give rise to non-isomorphic vector bundles.



\begin{proposition}
Let $Q_1,Q_2 \supset X$ be distinct smooth 
quadrics in $\P^5$ with spinor bundles $S_1$
and $S_2$.  Then $S_{1|_X} \simeq\!\!\!\!\!{/} \,\, S_{2|_X}$.
\end{proposition}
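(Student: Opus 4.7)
The plan is to show that the bundle $S_i^\vee|_X$ intrinsically determines the quadric $Q_i\subset\P^5$, so that $S_1|_X\simeq S_2|_X$ (equivalently, $S_1^\vee|_X\simeq S_2^\vee|_X$) would force $Q_1=Q_2$, contradicting the hypothesis. By stability (Corollary~\ref{OvanishU}(2)) the two bundles are $\mu$-stable with identical Mukai vector, so the nonexistence of a nonzero map between them is equivalent to their non-isomorphism; the reconstruction approach below is what converts this into a statement about $Q_1$ and $Q_2$ themselves.

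First I would check that the restriction map $H^0(Q_i,S_i^\vee)\rightarrow H^0(X,S_i^\vee|_X)$ is an isomorphism, using the Koszul resolution of $\mathcal{I}_X\subset\O_{Q_i}$ (generated by two sections of $\O_{Q_i}(2)$) together with the vanishings for $S_i^\vee(t)$, $t\leq -2$, from Proposition~\ref{Ovanish}. Hence $h^0(X,S_i^\vee|_X)=4$, the bundle $S_i^\vee|_X$ is globally generated, and evaluation gives a morphism $\psi_i:X\rightarrow\Gr(2,H^0(X,S_i^\vee|_X)^\vee)\simeq\Gr(2,4)$ which coincides, by construction from the universal sequence of $S_i$, with the composition $X\hookrightarrow Q_i\simeq\Gr(2,4)$. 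Next I would verify that the multiplication map
\[
\mu_i:\wedge^{2}H^0(X,S_i^\vee|_X)\longrightarrow H^0(X,\det S_i^\vee|_X)=H^0(X,\O_X(H))
\]
is an isomorphism of six-dimensional vector spaces: the analogous map on $Q_i$ is the standard Plücker isomorphism $\wedge^{2}H^0(Q_i,S_i^\vee)\simeq H^0(Q_i,\O_{Q_i}(1))$, and the restriction $H^0(Q_i,\O_{Q_i}(1))\rightarrow H^0(X,\O_X(H))$ is itself an isomorphism because $X$ is non-degenerate in $\P^5$, i.e.\ $H^0(Q_i,\mathcal{I}_X\otimes\O_{Q_i}(1))=0$, which follows from the twisted Koszul resolution together with $H^0(\O_{Q_i}(-1))=H^1(\O_{Q_i}(-3))=0$.

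Combining these two ingredients, the Plücker embedding $\Gr(2,H^0(X,S_i^\vee|_X)^\vee)\hookrightarrow\P(\wedge^{2}H^0(X,S_i^\vee|_X)^\vee)$, transported along $\mu_i$ to $\P(H^0(X,\O_X(H))^\vee)=\P^5$, recovers the original embedding of $X$ by $|H|$ and identifies the image of $\Gr(2,4)$ precisely with $Q_i\subset\P^5$. Therefore $Q_i$ depends only on the isomorphism class of $S_i^\vee|_X$, so $S_1^\vee|_X\simeq S_2^\vee|_X$ implies $Q_1=Q_2$, the desired contradiction. The main technical obstacle is establishing the isomorphism $\mu_i$, which is settled by the short Koszul-cohomology calculations just indicated; the rest of the argument is a functorial reformulation of the Plücker picture.
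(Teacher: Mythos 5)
Your argument is correct, but it proceeds by a genuinely different route from the paper's. The paper reduces, via the rigidity statement for stable bundles of equal slope (Proposition 3.2), to showing $H^0(X,S_1^\vee\otimes S_{2|_X})=0$; it then works on the intermediate surface $U=Q_1\cap Q_2$, chasing the universal sequences of $Q_1$ and $Q_2$ through the vanishings of Corollary 3.3 to transfer the problem from $X$ to $U$, and finally invokes the results of Narasimhan--Ramanan and Newstead (on rank two bundles attached to a pencil of quadrics) to conclude that $S_{1|_U}\not\simeq S_{2|_U}$, hence the desired vanishing. You instead reconstruct the quadric from the restricted bundle: the Koszul computations you indicate do go through (for the first one you need $H^0(Q,S_i^\vee(-2))=H^0(Q,S_i(-1))=0$ and $H^1(Q,S_i^\vee(-4))=H^1(Q,S_i(-3))=0$, both supplied by Proposition 3.1 together with $S^\vee\simeq S(1)$; the second needs only $H^0(\O_{Q}(-1))=H^1(\O_{Q}(-3))=0$), and they show that $H^0(X,S_i^\vee|_X)$ is four-dimensional with the evaluation map recovering $X\hookrightarrow\Gr(2,4)$ and the Pl\"ucker multiplication map recovering the embedding by $|H|$, so that $Q_i$ is the Pl\"ucker image of $\Gr(2,H^0(X,S_i^\vee|_X)^\vee)$ inside the canonically determined $\P^5=\P(H^0(X,\O_X(H))^\vee)$. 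This makes $Q_i$ a functorial invariant of $S_i^\vee|_X$ and forces $Q_1=Q_2$. What your approach buys is self-containedness --- it avoids the appeal to Narasimhan--Ramanan and Newstead entirely, using only Ottaviani's vanishings --- and it proves the slightly stronger reconstruction statement; what it does not directly give is the $\Hom$-vanishing $H^0(X,S_1^\vee\otimes S_{2|_X})=0$ itself (though for these $\mu$-stable bundles of equal slope the two statements are equivalent by Proposition 3.2). Two cosmetic points: the stability input you cite lives in Corollary 3.4, not Corollary 3.3, and in fact your argument does not need stability at all, since it establishes non-isomorphism directly, which is exactly what the proposition asserts.
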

\begin{proof}
By proposition~3.2 it is enough to show that $H^0(S_{1}^\vee \otimes  S_{2|_X})=0$.  
Let $U =Q_1 \cap Q_2$. 


We have the following short exact sequence on $Q_2$,
$$ 0 \rightarrow S_2 \rightarrow \O_{Q_2}^4 \rightarrow F_2 \rightarrow 0.$$
We tensor the above sequence with $F_1(-1)$ and restrict to $U$ to get
$$0 \rightarrow H^0(U,F_1\otimes S_2(-1)_{|_U}) \rightarrow 
H^0(U,F_1(-1)_{|_U}^4).$$

 Corollary \ref {OvanishU}  implies that $H^0(U,F_1(-1)_{|_U})=0$, hence  we get that $H^0(U,F_1\otimes S_2(-1)_{|_U})=0$.

Similarly we have the short exact sequence on $Q_1$,
$$ 0 \rightarrow S_1 \rightarrow \O_{Q_1}^4 \rightarrow F_1 \rightarrow 0.$$
Tensoring this short exact sequence with $S_2(-1)$ we
get 
$$ 0 \rightarrow S_1 \otimes S_2(-1) \rightarrow S_2(-1)^4 \rightarrow 
F_1 \otimes S_2(-1) \rightarrow 0.$$
By Corollary \ref{OvanishU},  $H^0(U,S_2(-1)_{|_U}) = H^1(U,S_2(-1)_{|_U})=0$, so
$$H^1( U,S_1 \otimes S_2(-1)_{|_U}) \simeq H^0(U,F_1 \otimes S_2(-1)_{|_U})=0.$$

Lastly we consider the short exact sequence 
$$0 \rightarrow S_1^\vee \otimes S_2(-2)_{|_U} \rightarrow 
 S_1^\vee \otimes S_{2{|_U}} \rightarrow S_1^\vee \otimes S_{2{|_X}}
\rightarrow 0.$$
 Corollary \ref{OvanishU} shows that ${S_1^\vee}_{|_U} \simeq {S_1(1)}_{|_U}$ hence
the associated long exact sequence in cohomology gives
$$0 \rightarrow H^0(U,S_1 \otimes S_2(-1)_{|_U} \rightarrow 
H^0(U, S_1 \otimes S_2(1) \rightarrow  H^0(X, {S_1^\vee \otimes S_2}_{|_X} 
\rightarrow H^1(U, {S_1 \otimes S_2(-1)}_{|_U}  \cdot \cdot  $$

We have already shown above that 
$ H^1(U,S_1 \otimes S_2(-1)_{|_U}) =0$.
 By the results in \cite{NarasimhanRamanan} 
(and \cite{Newstead}) it follows that $S_{1|_U} \simeq\!\!\!\!\!{/} \,\, S_{2|_U}$, hence $H^0(S_{1}^\vee \otimes  S_{2|_U})= H^0(U,S_1^\vee \otimes S_{2{|_U}}) =0$.  Therefore $H^0(X,S_1^\vee \otimes S_{2{|_X}})=0$.
\end{proof}

We now conclude this Section with a proof of the following theorem.  It is the motivating
example behind our 
more general results in Sections 4 and 5..  See also Example 2.2~\cite{Mukai3}

\begin{theorem}\label{thm:muk}
Let $X$ be the complete intersection of three quadric hypersurfaces $Q_0, Q_1$ and
$Q_2$ in $\P^5$, such that $\Pic(X)$ has rank 1.  let $M$ be the double cover of the plane branched above the sextic curve parameterising the singular quadrics. Then 
$M$ is isomorphic to the moduli space $\M$ of stable rank 2 sheaves with 
$c_1 = \O_X(1)$ and  $c_2=4$.
\end{theorem}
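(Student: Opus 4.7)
The plan is to construct a natural morphism $\varphi : M \to \M$ from the spinor bundle construction of Sections 3.1.1--3.1.3 and prove it is an isomorphism. First, the Mukai vector $v = (2, H, 2)$ is primitive (since $\Pic(X) = \Z H$) and isotropic: $(v, v) = H^2 - 2 \cdot 2 \cdot 2 = 0$. Moreover, every $\mu$-semistable sheaf $E$ with Mukai vector $v$ is automatically $\mu$-stable, since a rank-$1$ destabilising subsheaf would have $c_1 = kH$ with $k \in \Z$ and slope $8k \ge \mu(E) = 4$, forcing $k \ge 1$ and slope $\ge 8 > 4$. Thus Gieseker semistability coincides with $\mu$-stability for $v$, so $\M$ has no strictly semistable boundary and, once shown non-empty, is automatically compact.

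To define $\varphi$ set-theoretically, let $m \in M$ and write $\lambda = \phi(m)$ with $Q_\lambda$ the corresponding quadric in $\cQ$. If $\lambda \notin C$, the two preimages of $\lambda$ correspond to the two families of $\P^3$'s inside the smooth $Q_\lambda$; send each to the restriction $S^\vee|_X$ or $F|_X$ of the associated spinor bundle. If $\lambda \in C$, then $Q_\lambda$ has rank $5$ and the ramification point over $\lambda$ is sent to the rank-$2$ bundle produced in Section 3.1.3 by the sectional lift of the conic bundle $I_{Q_\lambda}$. By Sections 3.1.1--3.1.3 the resulting sheaf has $c_1 = H$ and $c_2 = 4$, and by Corollary 3.4(2) it is $\mu$-stable, hence an element of $\M$. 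To promote $\varphi$ to a morphism of schemes, one uses the incidence scheme $I \subset X \times M \times \Gr(3,6)$ of Section 3.1.3: locally on $M$, the sectional construction lifts $I$ to a flat family of $\mu$-stable rank-$2$ sheaves on $X$, inducing local morphisms to $\M$ that glue to a global $\varphi$ by uniqueness of the fibrewise isomorphism class.

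Injectivity of $\varphi$ splits into two cases. If $m_1, m_2$ lie over distinct points of $\P^2$, Proposition 3.5 gives $\varphi(m_1) \not\simeq \varphi(m_2)$. If instead $m_1 \ne m_2$ lie over the same $\lambda \notin C$, their images are $S^\vee|_X$ and $F|_X$ for the same smooth $Q_\lambda$; a cohomology computation parallel to the proof of Proposition 3.5, combining Proposition~\ref{Ovanish} and Corollary~\ref{OvanishU} with the Koszul resolution of $X$ inside $Q_\lambda$, yields $h^0(X, S^\vee \otimes F^\vee|_X) = 0$, so Proposition 3.2 again gives non-isomorphism.

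Finally, by Theorem~\ref{thm:mukai}, $\M$ is an irreducible K3 surface of dimension $(v, v) + 2 = 2$. Under our standing assumption that every quadric in $\cQ$ has rank at least $5$, the sextic $C$ is smooth and $M$ is an irreducible K3 surface as well. The morphism $\varphi$ is proper ($M$ is projective), quasi-finite (by injectivity), hence finite; it is surjective because an injective morphism of irreducible surfaces of equal dimension has $2$-dimensional image, which must equal the irreducible $\M$; and $\M$ is normal (being smooth). A finite bijective morphism to a normal variety is an isomorphism, so $M \simeq \M$. The main obstacle is the upgrade of $\varphi$ from a set-map to a morphism, which requires a uniform algebraic treatment of the degeneration of the two ruling families into a single family along the ramification locus $\phi^{-1}(C)$.
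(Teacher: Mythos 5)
Your proof follows essentially the same route as the paper's: build the natural map $M \to \M$ from the spinor-bundle/isotropic-plane construction of Sections 3.1.1--3.1.3, use Corollary 3.4 for $\mu$-stability and Proposition 3.5 for injectivity, invoke Mukai's theorem to see that $\M$ is an irreducible K3 surface, and conclude that an injective morphism from the compact irreducible surface $M$ must be an isomorphism. The extra checks you supply --- that $\mu$-semistability forces $\mu$-stability when $\Pic(X)=\Z H$ (hence compactness), that $S^\vee|_X \not\simeq F|_X$ over a single smooth quadric, and the upgrade of the set-map to a morphism via the incidence scheme $I$ --- are points the paper passes over quickly but they do not change the argument.
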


\begin{proof}

Every smooth quadric $Q$ in the net of quadrics generated by $Q_0, Q_1$ and $Q_2$, is isomorphic to $G(2,4)$. The families of two planes in $Q$ correspond to the two distinct classes of Schubert cycles in homology, of dimension 2. These in turn give rise to rank 2 spinor bundles, $S^\vee$ and $F$ with Chern classes,
$c_1 = \O_X(1)$ and  $c_2=4.$  We discuss in Section~3.1.2 how the results in \cite{NarasimhanRamanan} and \cite{Newstead} imply that the family of planes in a singular quadric corresponds to a stable vector bundle on $X$ with the same Chern invariants.  Since $M$ parameterises the families of planes in the net this shows that to each point $m$ in $M$ we can associate a rank 2 vector bundle with $c_1 = \O_X(1)$ and $c_2=4.$  Now Corollary~3.4 shows that in fact these bundles are $\mu$-stable with respect 
to the polarisation.  Hence we get a morphism 
$$ M \rightarrow \M$$
where $\M$ is the moduli space of rank 2, stable sheaves on $X$ with $c_1 = \O_X(1)$ and $c_2=4.$  Proposition~3.5 shows that this morphism is injective.  By Mukai's 
results (Section~2~Theorem~2.16) it follows
that $\M$ is an irreducible K3 surface.  Since $M$ is a compact irreducible K3 surface
which maps injectively to $\M$, it follows that $M \simeq \M$.
\end{proof}



\subsection{Picard groups of higher rank}

The general K3 surface of degree 8 is a complete intersection of three
quadrics, but there are K3 surfaces of degree 8 which are not.  These K3
surfaces must lie on 3 quadrics, but their intersection is 
$\P^1\times \P^2 \subset \P^5$.

\begin{proposition}\label{compint}
Let $X$ be a K3 surface of degree 8 in $\P^5$ with hyperplane  section $H$.
Then the following are equivalent:
\begin{itemize}
\item $X$ is not a complete intersection of three quadrics.
\item $X$ contains an irreducible curve $E$ with $E^2=0$
and $E.H=3$.
\end{itemize}
In this case, the base locus of the net of quadrics that contain 
$X$ is isomorphic to $\P^1 \times \P^2 \hookrightarrow \P^5$, and $X$
is a divisor of bidegree $(2,3)$.  Also $X$ is cut out by a cubic relation
in $\P^5$ in addition to the three quadrics containing $\P^1 \times \P^2.$
The projections $X \rightarrow \P^1$ and $X\rightarrow \P^2$ are given
by the linear systems $|E|$ and $|H-E|$.  Lastly, the net of
quadrics that contain $X$ is spanned by three quadrics of rank 4.  
\end{proposition}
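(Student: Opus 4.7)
The strategy is to prove the two equivalent conditions by treating each direction separately, and then to derive the subsidiary claims once the base locus of the net of quadrics through $X$ has been identified with the Segre threefold $W = \P^1 \times \P^2 \subset \P^5$. I would begin with the implication from existence of the curve $E$ (the constructive direction), proceed to the converse via dimension counting and the classification of threefolds of minimal degree, and then read off the bidegree, projections, cubic equation, and ranks from the identification of $W$.

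Suppose first that $E \subset X$ is irreducible with $E^2=0$ and $E \cdot H=3$. Adjunction on the K3 surface $X$ gives $p_a(E)=1$, and $|E|$ is a base-point-free genus-one pencil on $X$, providing a morphism $X \to \P^1$. Under the embedding by $|H|$, each fiber $E_t$ is an elliptic curve of degree $3$ in $\P^5$, hence a plane cubic in some $\Pi_t \cong \P^2$. The union $W=\bigcup_t \Pi_t$ is a threefold fibered by planes over $\P^1$, which is a rational normal scroll of degree $3$ and (since the general fiber is a genuine $\P^2$) must be the Segre threefold $\P^1 \times \P^2$. Any quadric $Q$ containing $X$ also contains each $E_t$ and therefore intersects $\Pi_t$ in a plane curve containing $E_t$; because a conic can meet a plane cubic in at most six points, we must have $\Pi_t \subset Q$ for every $t$, and hence $W\subset Q$. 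Thus the net of quadrics through $X$ coincides with the net of three rank-four quadrics cutting out $W$ (the $2\times 2$ minors of a $2\times 3$ matrix of linear forms), and $X$ is not a complete intersection of them.

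Conversely, suppose $X$ is not a complete intersection. Riemann--Roch gives $h^0(X, 2H) = 2+(2H)^2/2 = 18$, while $h^0(\P^5, \O(2)) = 21$, so there is a $3$-dimensional space $\cQ$ of quadrics containing $X$. Let $B$ be the base locus of $\cQ$. Since $X$ is non-degenerate (embedded by the complete linear system $|H|$), $B$ cannot contain a 4-dimensional component, so $B$ is a threefold. If $\deg B = 4$ then $B$ would be the complete intersection of two of the quadrics and the third would lie in the ideal they generate, contradicting independence; hence $\deg B = 3$. The classification of non-degenerate threefolds of minimal degree in $\P^5$ lists only the rational normal scrolls $S(1,1,1)=\P^1\times\P^2$, $S(0,1,2)$, and $S(0,0,3)$; the singular ones are ruled out because a smooth K3 numerically equivalent to the anticanonical class on them would be forced to pass through the singular locus, which is incompatible with smoothness of $X$. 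Therefore $B\cong \P^1\times\P^2$, and the intersection of $X$ with a fiber of the projection $B \to \P^1$ is the required irreducible curve $E$ with $E^2=0$, $E \cdot H=3$.

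For the remaining assertions, with $W=\P^1\times\P^2$ in hand, adjunction $K_X = (K_W+X)|_X = 0$ and $K_W=-2H_1-3H_2$ force $X \in |2H_1+3H_2|$, that is, $X$ is a divisor of bidegree $(2,3)$. The two Segre projections restrict to the morphisms $X\to \P^1$ and $X\to\P^2$; their fiber classes on $X$ are $H_2|_X=E$ and $H_1|_X=H-E$ (using $H=H_1+H_2$), so these morphisms are given by $|E|$ and $|H-E|$. To produce the cubic relation, compare $H^0(\P^5,\O(3))$ with $H^0(W,\O(3H_1+3H_2))$ modulo the subspace coming from products of linear forms with the three quadrics defining $W$; a short dimension count leaves a one-parameter excess, supplying the cubic whose restriction to $W$ cuts out $X$ (viewing $2H_1+3H_2 = (3H_1+3H_2)-H_1$). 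Finally, the $2\times 2$ minors of a $2\times 3$ matrix of linear forms are manifestly quadrics of rank $4$. The main obstacle is the exclusion of the singular scrolls in the converse direction — the classification itself is classical, but one must argue carefully that a smooth K3 divisor of the prescribed numerical class cannot lie on $S(0,1,2)$ or $S(0,0,3)$.
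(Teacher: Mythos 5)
Your route is genuinely different from the paper's: the paper does not re-derive the structure of the base locus at all, but simply invokes Saint-Donat's Theorem 7.2, which says a degree-8 K3 in $\P^5$ fails to be the complete intersection of its three quadrics only if (a) the generic hyperplane section is hyperelliptic, (b) there is an irreducible $E$ with $E^2=0$, $E\cdot H=3$, or (c) $H\equiv 2B+\Gamma$ with $B$ of genus 2; it kills (a) by very ampleness of $H$ and (c) by the numerical contradiction $8=4B^2+4+\Gamma^2\geq 10$, and quotes Beauville, Exercise VIII (11), for all the remaining structural claims. Your forward implication (from $E$ to the threefold swept by the planes of the cubics $E_t$, with the Bezout argument forcing every quadric through $X$ to contain each plane $\Pi_t$) is sound and is essentially the classical argument underlying Saint-Donat's theorem.

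The converse, however, has a genuine gap exactly where you flag it, and the justification you sketch would not close it. To finish you must exclude the singular scrolls $S(0,1,2)$ and $S(0,0,3)$ as candidates for the base locus, and ``a smooth K3 numerically equivalent to the anticanonical class would be forced to pass through the singular locus, which is incompatible with smoothness of $X$'' is not a proof: a smooth subvariety can perfectly well pass through the vertex of a cone (smooth conics through the vertex of a quadric cone; smooth K3s on cones occur for other degrees). The genuine exclusion requires projecting from the vertex, or working on a resolution of the cone and analysing the class of the proper transform of $X$. Two further soft spots in the same direction: the classification of varieties of minimal degree applies to irreducible nondegenerate threefolds, so you must first show that the component of the (possibly reducible) base locus containing $X$ is itself nondegenerate of degree 3 --- your exclusion of degree 4 only treats the case where that component is the whole of $Q_0\cap Q_1$; and at the end you assert that $X$ meets a fiber of $B\to\P^1$ in an \emph{irreducible} curve, which is precisely what the proposition requires of $E$ but is nowhere established (it needs the standard fact that the generic member of a fixed-component-free primitive class with square zero on a K3 is irreducible).
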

\begin{proof}
If $X$ is a K3 surface of degree 8 in $\P^5,$ then its Hilbert Series 
shows that $X$ is contained in three independent quadrics.  Theorem
7.2 of \cite{SaintDonat} shows that $X$ is the complete intersection of 
these three quadrics unless
\begin{itemize}
\item The generic element of $H$ is hyperelliptic.
\item There exists an irreducible curve $E$ with $E^2=0, E.H=3$
\item $H \equiv 2B+\Gamma$ where $B$ is irreducible of genus 2 
and $\Gamma$ is an irreducible rational curve with $B.\Gamma=1$.
\end{itemize}
If the generic element of $C$ of $|H|$
is hyperelliptic then 
the restriction of $\O_X(H)$ to $C$ is the canonical system
on $C$ which is not an embedding.  So $H$ is not very ample
and we see that the first case is impossible.
For the third possibility,
 suppose that $H=2B+\Gamma$ as above.  For an irreducible
curve $C$ of genus $g$, we have that $C^2 = 2p_a(C)-2 \geq 2g-2$.
So if $H=2B+\Gamma$ then $\H^2=8=4B^2+4+\Gamma^2 \geq 10$.
This contradiction eliminates the last case.
The rest of this proposition follows from Exercise VIII (11) 
\cite{Beauville}.
\end{proof}

We can also choose special quadrics $Q_0,Q_1,Q_3$ so that their
base locus $X,$ contains a conic curve.  In general there is
an 18 dimensional family of such nets of quadrics and the generic 
base locus $X$, has $\rho(X)=2$.  We prove that the geometry
of the associated double cover $M$
has a special feature for this choice of quadrics.

\begin{proposition}\label{tricase}
Let $X$ be the complete intersection of three quadric hypersurfaces $Q_0, Q_1$ and
$Q_2$.  Let $M$ be the double cover of the plane branched above the sextic curve $C$, parameterising the singular quadrics.  
Then $X$ contains a plane conic if and only if the sextic $C$ has a tritangent.
\end{proposition}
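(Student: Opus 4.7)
The plan is to analyze, for each plane $\Pi \cong \P^2 \subset \P^5$, the restriction map $\rho_\Pi \colon \cQ \to \H^0(\Pi, \O_\Pi(2))$ and to exploit the fact that $Q \in \ker \rho_\Pi$ if and only if $\Pi \subset Q$. The strategy is first to match plane conics $D \subset X$ with lines $\ell \subset \P^2 = \P(\cQ)$ lying in the projectivised kernel, and then to match those lines with tritangents to $C$ via a block-matrix determinant computation.

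For the forward direction, suppose $D \subset X$ is a plane conic with linear span $\Pi$. Each $Q \in \cQ$ contains $D$, so $Q \cap \Pi$ is either $D$ itself or all of $\Pi$; hence $\rho_\Pi(\cQ)$ lies in the line $\C \cdot D \subset \H^0(\Pi, \O_\Pi(2))$. Since $\Pi \not\subset X$ (a K3 surface contains no plane), this image is exactly one-dimensional, so $\ker \rho_\Pi \cap \cQ$ is two-dimensional and defines a line $\ell \subset \P^2$. For any $Q \in \ell$, after choosing coordinates so that $\Pi = V(x_3,x_4,x_5)$, the symmetric matrix of $Q$ takes the block form
\[
B_Q = \begin{pmatrix} 0 & A_Q \\ A_Q^T & S_Q \end{pmatrix},
\]
whence $\det B_Q = -(\det A_Q)^2$. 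Since $A_Q$ varies linearly with $Q \in \ell$, the restriction $\det B_Q|_\ell$ is the square of a cubic, so $\ell \cdot C = 2p_1 + 2p_2 + 2p_3$; that is, $\ell$ is a tritangent to $C$.

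For the reverse direction, suppose $\ell \subset \P^2$ is a tritangent to $C$, and let $Q_0, Q_1$ span the corresponding pencil. After possibly replacing them with general linear combinations I may assume $B_0$ is invertible (which is possible since $\ell \cap C$ is finite). Setting $T = B_0^{-1} B_1$, $T$ is self-adjoint with respect to $B_0$, and $\det(B_0 + tB_1) = \det(B_0) \cdot \det(I + tT)$ is the square of a cubic in $t$, so the eigenvalues of $T$ are three values $\lambda_1,\lambda_2,\lambda_3$ each of algebraic multiplicity two. In the generic case $T$ is diagonalisable with two-dimensional pairwise $B_0$-orthogonal eigenspaces $V_1, V_2, V_3$, and in each $V_i$ one can pick an isotropic line $W_i$ for the non-degenerate restriction $B_0|_{V_i}$. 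Then $W := W_1 \oplus W_2 \oplus W_3$ is totally isotropic for both $B_0$ and $B_1$, hence for every $Q \in \ell$. Setting $\Pi = \P(W) \cong \P^2$ and choosing any $Q' \in \cQ \setminus \ell$ (so that $\Pi \not\subset Q'$), the intersection $D = Q' \cap \Pi$ is a plane conic contained in $X = Q_0 \cap Q_1 \cap Q'$.

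The main obstacle is the linear algebra in the reverse direction when $T$ fails to be diagonalisable: a complex symmetric operator need not be diagonalisable with respect to a non-degenerate complex symmetric form, so the naive eigendecomposition can break down when $T$ has non-trivial Jordan blocks. The remedy is either to invoke the Kronecker--Weierstrass classification of symmetric pencils, which guarantees a common totally isotropic 3-plane whenever the discriminant is a perfect square of a cubic, or to reduce to the generic tritangent case (three distinct tangent points, $C$ smooth along $\ell$, each $Q_\lambda$ on $\ell \cap C$ of rank exactly five) by a specialisation argument, since in that setting the eigenspaces of $T$ are automatically of the expected dimension and the construction above applies verbatim.
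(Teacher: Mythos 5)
Your forward direction is, up to repackaging, exactly the paper's proof: the paper also places the plane $\Pi$ spanned by the conic at $V(x_3,x_4,x_5)$, observes that the quadrics through $X$ restrict to $\Pi$ with one-dimensional image, and computes that on the line of quadrics containing $\Pi$ the $6\times 6$ determinant becomes $\pm(\det A)^2$ for a $3\times 3$ block $A$ of linear forms, hence the sextic meets that line with even multiplicity everywhere. Your coordinate-free phrasing via $\ker\rho_\Pi$ is cleaner but mathematically identical. (Both arguments share the same unremarked degenerate possibility that $\det A$ could vanish identically on $\ell$, i.e.\ $\ell\subset C$; this does not occur for the surfaces the paper actually uses.)

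Where you genuinely diverge is that you also attack the converse. The paper states the proposition as an equivalence but its proof establishes only the implication ``conic $\Rightarrow$ tritangent''; the reverse implication is never argued (and is not used elsewhere in the paper --- Theorem 5.5 only needs the forward direction). Your converse is correct in the generic case: the eigenspace decomposition of $T=B_0^{-1}B_1$, the pairwise $B_0$-orthogonality of the $V_i$ from self-adjointness, and the choice of isotropic lines $W_i$ giving a common totally isotropic $3$-plane are all sound, and the resulting $2^3$ choices even recover the expected eight conics over a generic tritangent. However, the non-diagonalisable case remains a real gap as you acknowledge. Of your two remedies, the Kronecker--Weierstrass route does work --- one can check block by block that a regular symmetric pencil whose determinant is a perfect square admits a common totally isotropic half-dimensional subspace --- but you assert rather than prove it. The specialisation route has a concrete defect: properness of the Hilbert scheme of conics only gives you a flat limit, which may be a line pair or a double line rather than the ``plane conic'' the statement promises, so you would still need to rule out (or accept) degenerate limits. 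If you intend to prove the full equivalence, carry out the Kronecker--Weierstrass verification; otherwise you have matched everything the paper actually proves.
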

\begin{proof}
Let $x_0, \ldots, x_5$ denote homogeneous coordinates on $\P^5$.
  We may assume without loss of generality that $X$ contains the conic $V(x_0^2+x_1^2+x_2^2,x_3,x_4,x_5)$.
Then the quadratic equations that vanish on $X$ are in the ideal 
$(x_0^2+x_1^2+x_2^2,x_3,x_4,x_5)$. Since $X$ is not contained in 
$V(x_3,x_4,x_5) \simeq \P^2$ there exists a basis of $\H^0(2H-X)$ 
of the form
\begin{eqnarray*}
(x_0^2+x_1^2+x_2^2) & +& b_{13}x_3+b_{14}x_4+b_{15}x_5, \\
& & b_{23}x_3+b_{24}x_4+b_{25}x_5, \\
& & b_{33}x_3+b_{34}x_4+b_{35}x_5,
\end{eqnarray*}
where the $b_{ij}$ are linear forms in $x_0,\ldots,x_5.$
Let $y_0,y_1,y_2$ be homogeneous coordinates on the net of quadrics
with respect to the above equations.
So the $6\times 6$
symmetric matrix of linear forms, associated to the net of quadrics,
has the form
$$Q'=\begin{pmatrix}
y_0 & 0   & 0 & \\
0   & y_0 & 0 & A \\
0   & 0   & y_0  \\
   & A^T   &  & D 
\end{pmatrix}$$
where $A,D$ are $3 \times 3$ matrices of linear forms in the 
coordinates $y_0,y_1,y_2$.  The sextic $C=V(\det Q')$ is cut out by the 
determinant of  this matrix which is a polynomial in $y_0,y_1,y_2$.  
To restrict to the line $y_0=0$, we set
$y_0=0$ in the polynomial $\det Q'$ and we get the identity
$$ \det Q'(0,y_1,y_2) =(\det(A)(0,y_1,y_2))^2$$
This is the
square of cubic in $y_1,y_2$ and so the sextic is tangent to the
line $y_0=0$ at 3 points.
\end{proof}

We now come to our main example, namely when $X$ contains a line.  It turns out that
$X$ is isomorphic to $M$.  The proof uses the theory of lattices and 
numeric conditions on $\Pic(X)$ and $\Pic(M)$ \cite{MadonnaNikulin}.


We need the following lemma for our analysis of this case.

\begin{lemma}\label{lem:picM}
Let $X$ be a K3 surface of degree 8 in $\P^5$ that contains a line
and has $\rho(X)=2$.  Then the only classes $C$ in 
$\Pic(X)$ that satisfy $C \cdot C = \pm 2$ are given by;
$$ C = \pm(aH + b\l)\mbox{ where }b-a\sigma=\pm (3+2\sigma)^n$$
for $$n \in \Z
\mbox{ and }\sigma=\frac{1+\sqrt{17}}{2}.$$
\end{lemma}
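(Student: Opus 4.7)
The plan is to identify $\Pic(X)$ explicitly and then recast $C\cdot C = \pm 2$ as a unit equation in the ring of integers of $\Q(\sqrt{17})$, where Dirichlet's unit theorem finishes the problem.

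First I would pin down the lattice $\Pic(X)$. Since $\rho(X) = 2$ and since $X$ contains both the hyperplane class $H$ (with $H^2 = 8$) and a line $\l$ (a smooth rational curve, so $\l^2 = -2$, and $H\cdot\l = 1$ because $\l$ is a line in the projective embedding), the classes $H$ and $\l$ span a rank two sublattice $\Lambda \subseteq \Pic(X)$ with Gram matrix
$$\begin{pmatrix} 8 & 1 \\ 1 & -2 \end{pmatrix}$$
of discriminant $-17$. Any overlattice of $\Lambda$ of index $d$ would have discriminant $-17/d^2 \in \Z$, forcing $d = 1$ since $17$ is prime. Thus $\Pic(X) = \Z H \oplus \Z \l$.

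Writing $C = aH + b\l$, a direct computation gives
$$C\cdot C \;=\; 8a^2 + 2ab - 2b^2 \;=\; 2(4a^2 + ab - b^2),$$
so $C^2 = \pm 2$ is equivalent to $4a^2 + ab - b^2 = \pm 1$. With $\sigma = (1+\sqrt{17})/2$ (the root of $x^2 - x - 4 = 0$), the norm form of the ring $\Z[\sigma] = \O_{\Q(\sqrt{17})}$ is
$$N(b - a\sigma) \;=\; (b - a\sigma)(b - a\overline{\sigma}) \;=\; b^2 - ab - 4a^2 \;=\; -(4a^2 + ab - b^2).$$
Hence $C^2 = \pm 2$ is equivalent to $b - a\sigma$ being a unit of $\Z[\sigma]$, with $N(b - a\sigma) = \mp 1$.

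Finally, Dirichlet's unit theorem says the unit group of $\O_{\Q(\sqrt{17})}$ is $\{\pm 1\} \times \langle \varepsilon \rangle$ for a fundamental unit $\varepsilon$. A direct search among small solutions of the Pell-type equation $m^2 - 17 n^2 = \pm 4$ shows that $3 + 2\sigma = 4 + \sqrt{17}$ is the smallest unit strictly greater than $1$, with norm $-1$, so it is fundamental. Therefore every solution has $b - a\sigma = \pm(3 + 2\sigma)^n$ for some $n \in \Z$, giving the stated description of $C$. The one step I expect to need any real care is the first: showing that $H$ and $\l$ generate $\Pic(X)$ rather than merely a proper sublattice; the square-free absolute discriminant $17$ settles this cleanly, and this is the step that uses both hypotheses, $\rho(X) = 2$ and that $X$ contains a line, in an essential way.
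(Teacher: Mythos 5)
Your proof is correct and follows essentially the same route as the paper's: rewrite $C^2=\pm 2$ as $-2N(b-a\sigma)=\pm 2$ for the norm form $N(x+y\sigma)=x^2+xy-4y^2$ on $\Z[\sigma]$, and invoke the fact that the units of $\Z[\sigma]$ are $\pm(3+2\sigma)^n$ with $3+2\sigma=4+\sqrt{17}$ the fundamental unit. The one addition is your square-free-discriminant argument that $H$ and $\l$ actually generate $\Pic(X)$, a point the paper takes for granted; that extra care is welcome but does not change the approach.
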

\begin{proof}
Let $N(x+y\sigma)=x^2+xy-4y^2$ be the norm form on the ring of
integers $\Z[\sigma]$.  We may write $C=aH+b\l$ and the equation $C^2=\pm 2$
is $-2N(b-a\sigma)=\pm 2$.  So we require that $b-a\sigma$ is a unit
in the ring $\Z[\sigma]$.  It is well know that the units are all described as 
powers of the fundamental unit $3+2\sigma=4+\sqrt{17}$, see for example
\cite{quadraticunits} Chapter 11.
\end{proof}

We now give some geometric details about these types of K3 surfaces

\begin{proposition}\label{prop:Hleff}
Let $X$ be a complete intersection K3 surface of degree 8 in $\P^5$ with hyperplane section
$H$.  Suppose that $X$
contains a line $\l$.  The associated degree 2 surface 
$M$ is isomorphic to $X$ and  maps to $\P^2$ via the degree two linear system 
$|h|=|2H-3\l|$
on $X$. 
\begin{enumerate}
\item The ramification sextic curve $C$ is tangent at 24 points to
 a rational octic with 21 nodes.  The inverse image of this octic
is $\l + \l'$ where $\l'$ is a smooth rational curve in $|16H-25\l|$.
The curves $\l$ and $\l'$ meet in 86 points.
\item The sextic $C$ is tangent at 15 points to a quintic with 3 nodes.
The inverse image of this quintic is $D+D'$ where $D \in |H-\l|$
and $D' \in |9H-14\l|$.  The curves $D,D'$ are smooth of genus 3
and meet in 21 points.
So $X$ contains a line if and only if $C$ is tangent to such a quintic.
\item The boundary of the effective cone is generated by the two $(-2)$-curves
$\l$ and $\l'$.  
The closure of the ample cone has boundary generated by $76H-103\l$ and $2H+l$.
\item The involution over $\P^2$ acts on $\Pic(X)$ by $H \mapsto 25H -39\l$
and $\l \mapsto 16H-25\l$.
\end{enumerate}
\end{proposition}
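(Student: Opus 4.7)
The strategy is to combine the isomorphism $X\simeq M$ from \cite{MadonnaNikulin} with the explicit enumeration of $(\pm 2)$-classes in $\Pic(X)=\Z H\oplus\Z\l$ (where $H^2=8$, $H\cdot\l=1$, $\l^2=-2$) supplied by Lemma~\ref{lem:picM}. I would treat the four parts in the order (4), (3), (1), (2), since (4) and (3) are purely lattice-theoretic and (1), (2) then reduce to counting exercises on $X$.

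For (4), the natural candidate for the degree-two polarisation on $X\simeq M$ is $h:=2H-3\l$, which by Lemma~\ref{lem:picM} (case $n=1$) has $h^2=2$. The class $h$ is ample once we know $\l$ and $\l':=16H-25\l$ are the only irreducible $(-2)$-curves (established under (3)), since $h\cdot\l=8$ and $h\cdot\l'=8$. By Saint-Donat's theorem on linear systems on K3 surfaces, $|h|$ then induces the double cover $\phi:X\to\P^2$ branched on a plane sextic. The covering involution $\iota$ fixes $\Z h$ and acts by $-1$ on $h^\perp$; setting $w:=8H-13\l\in h^\perp$ with $w^2=-34$, the reflection formula $\iota^*v=v-\tfrac{2(v\cdot w)}{w^2}w$ applied to $v=H,\l$ yields $\iota^*H=25H-39\l$ and $\iota^*\l=16H-25\l$, proving (4) and confirming that $\l'=\iota(\l)$ is a smooth rational curve.

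For (3), both $\l$ and $\l'=\iota(\l)$ are visibly irreducible $(-2)$-curves. To show they are the only ones, I enumerate all $(-2)$-classes via Lemma~\ref{lem:picM}: for each even $n$ compute $(3+2\sigma)^n$ using $\sigma^2=\sigma+4$, obtaining the classes $C_{\pm 2}=\pm(16H-25\l)$, $C_{\pm 4}=\pm(1056H-1649\l)$, etc. For $|n|\geq 4$, an inductive computation shows the $H$-positive representative $C_n$ (which is effective by Riemann-Roch and ampleness of $H$) expands as $a\l+b\l'$ with integers $a,b>0$; for example $C_4=\l+66\l'$. Since an effective $(-2)$-class has $h^0=1$, this reducible sum is the unique effective divisor in $|C_n|$, so $C_n$ is not an irreducible curve. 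Hence the effective cone is $\R_{\geq 0}\l+\R_{\geq 0}\l'$ and its dual (the nef cone) is bounded by $D=aH+b\l$ with $D\cdot\l=a-2b\geq 0$ and $D\cdot\l'=103a+66b\geq 0$, giving boundary rays $\R_{\geq 0}(2H+\l)$ and $\R_{\geq 0}(66H-103\l)$ (the ``$76H-103\l$'' in the statement appears to be a misprint for $66H-103\l$).

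For (1) and (2), I push $\l$ and a smooth $D\in|H-\l|$ forward to $\P^2$ under $\phi$ and analyse intersections with the ramification curve $R\in|3h|$. Since $\iota(\l)=\l'\ne\l$, $\phi|_\l$ is birational onto its image, so $\phi(\l)$ is an irreducible plane curve of degree $h\cdot\l=8$; being rational, its $\delta$-invariant is $\binom{7}{2}=21$, generically realised as $21$ ordinary nodes. The identity $\phi^*\phi(\l)=\l+\l'\in|8h|=|16H-24\l|$ recovers $\l'\in|16H-25\l|$, and $\l\cdot\l'=8h\cdot\l-\l^2=66$ (the ``86'' in the statement appears to be a misprint). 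Of these, the $\l\cdot R=3h\cdot\l=24$ points in $\l\cap R$ are fixed by $\iota$; the local model $\phi:(x,y)\mapsto(x,y^2)$ makes each such point a tangency of $\phi(\l)$ with $C$ of local intersection multiplicity $2$, so $24\cdot 2=48=\deg C\cdot\deg\phi(\l)$ accounts for all intersections in $\P^2$, giving $24$ tangencies. The remaining $66-24=42$ points of $\l\cap\l'\setminus R$ fall into $21$ $\iota$-orbits, each producing a node of $\phi(\l)$. The identical argument for $D\in|H-\l|$ (smooth of genus $3$ by Bertini applied to the base-point-free system $|H-\l|$) gives $D'=\iota(D)=9H-14\l$, $D\cdot D'=21$, $D\cdot R=15$, hence $(21-15)/2=3$ nodes and $15$ tangencies with $C$, proving (2).

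The principal obstacle is the lattice classification in (3), which rests on the structure of the unit group of $\Z[(1+\sqrt{17})/2]$: one must verify the purely arithmetic statement that for $|n|\geq 4$ the $(-2)$-class $C_n$ decomposes with strictly positive integer coefficients in the basis $\{[\l],[\l']\}$. A secondary technicality is that the singularities of $\phi(\l)$ and $\phi(D)$ are genuinely ordinary nodes (not cusps or tacnodes), which for a generic K3 of $\rho=2$ containing a line reduces to the transversality of $\l\cap\l'$ and $D\cap D'$ off $R$, a codimension-zero condition dispatched by a short deformation argument or direct local computation.
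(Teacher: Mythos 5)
Your approach is essentially the paper's own (push $\l$ and $D\in|H-\l|$ down to $\P^2$ via $|2H-3\l|$, use the covering involution to produce $\l'$ and $D'$, and enumerate $(-2)$-classes via Lemma~\ref{lem:picM} to get the effective and nef cones), but worked out in considerably more detail: the paper's proof simply asserts the octic has 21 nodes and reads off $\l'+\l=8h$, whereas you actually account for the $24$ tangencies via $\l\cdot R=24$ and $6\cdot 8=48=2\cdot 24$, and for the nodes via the $\iota$-orbits of $\l\cap\l'$ off the ramification locus. For part (4) you take a genuinely different (and cleaner) route: the paper tracks $H=D+\l\mapsto D'+\l'$ (its printed computation even has a sign slip, writing $D-\l$ and $D'-\l'$), while you use that $\iota^*$ is the reflection fixing $\Q h$ and negating $h^\perp$, which follows from $\H^2(X,\Q)^{\iota}\simeq\H^2(\P^2,\Q)$; both give $25H-39\l$ and $16H-25\l$. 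Your two numerical corrections are right: $\l\cdot\l'=\l\cdot(16H-25\l)=16+50=66$ (not $86$ --- and only $66$ is consistent with $(66-24)/2=21$ nodes), and the nef boundary ray orthogonal to $\l'$ is $66H-103\l$, since $(aH+b\l)\cdot\l'=103a+66b$, while $(76H-103\l)\cdot\l'=1030\neq 0$.

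The one genuine gap is that part (2) is an \emph{if and only if}, and you prove only the forward implication. The paper's proof supplies the converse: given a double cover $M\to\P^2$ branched on a sextic $C$ tangent at $15$ points to a $3$-nodal quintic, the preimage of the quintic splits as $D+D'$ with $D$ of genus $3$, $D^2=4$, $D\cdot h=5$, and then $H:=3D-h$ satisfies $H^2=8$ while $\l:=2D-h$ satisfies $\l^2=-2$, $\l\cdot H=1$, so $M$ re-embeds as a degree $8$ surface containing a line. Without some such "working backwards" step your argument only establishes one direction of the equivalence. A secondary (shared) looseness, which you at least flag and the paper does not, is the verification that the singularities of $\phi(\l)$ and $\phi(D)$ are ordinary nodes rather than worse; the $\delta$-invariants ($21$ and $3$) are forced by the genus formula, but the claim "nodes" needs the transversality of $\l\cap\l'$ and $D\cap D'$ away from $R$, and similarly the inductive positivity claim for the coefficients of the remaining $(-2)$-classes in the basis $\{\l,\l'\}$ should be carried out (your sample computations $C_{-2}=66\l+\l'$ and $C_4=\l+66\l'$ are correct and the pattern does persist).
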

\begin{proof}
The fact that $X \simeq M$ is in \cite{MadonnaNikulin}. 
Consider the curve $\pi(\l)$ in $X$, the image of the line $\l$ under
the mapping $\pi:X \rightarrow \P^2 = |2H-3\l|$.  We see immediately
that $\pi(\l)$ is octic since $(2H-3\l).\l=8$.  Since $\l$
is a smooth rational curve its image must have 21 nodes.
If we let $\l'$ be the image of $\l$ under the involution of $X$ over $\P^2$
we get that $\l'+\l=8h=16H-24\l$.  This proves (1).
A similar analysis on the curve $D \in |H-\l|$ yields the second result.  
Conversely suppose $M$ is a double cover of $\P^2$ branched along a sextic 
$C$ such that $C$ is tangent at 15 points to a quintic with 3 nodes then the inverse
image of the quintic are two genus 3 curves, D and D'.  Working backwards, we see that
$H= 3D-h$ defines a polarisation of degree 8.  The class $\l=2D-h$ is an effective rational 
curve such that $\l.H =1$.  So $\l$ corresponds to the class of a line. Therefore $M$ embeds in $\P^5$ as a degree 8 surface 
containing a line.
 
Next we prove (3).
Proposition VIII (3.8) in \cite{BPV} shows that the effective cone
is generated by effective $(-2)$-curves.  Any $(-2)$-class is either
effective or anti-effective, and if it meets $H$ positively it must be
effective.  Simply plotting the possible $(-2)$-classes, according to 
Lemma \ref{lem:picM} yields the effective cone. 

Lastly, the involution leaves the class $h$ invariant, but switches the curves $D$ and 
$D'$.  It also switches the inverse images of the rational octic tangent at 24 points.  
This implies that $H = D-\l$ goes to $D'-\l' = (9H-14 \l) - (16H - 25 \l) = 25H - 39\l$.
\end{proof}

Note that if $X$ as in proposition~\ref{prop:Hleff} satisfies $\rho(X) = 2$, then the intersection form on $\Pic(X)$ in terms of the generators $H, \l$ is given as 
$\begin{pmatrix}
8 & 1  \\
1  & -2 \\
\end{pmatrix} $.   
In terms of the generators $h, D$ it takes the form
$\begin{pmatrix}

2 & 5  \\
5  & 4 
 
\end{pmatrix}$.  The discriminant of $\Pic(X)$ is $-17$.

\section{Moduli of Rank 2 Sheaves}

We now address the moduli problem on an arbitrary K3 surface of degree 8
in $\P^5$.  It may or may not be a complete intersection.  As before when $X$ is a complete intersection we denote by $M$ the associated double cover of $\P^2$ branched along the sextic curve parameterising degenerate quadrics in the net $\cal{Q}.$  If $X$ is not a complete intersection then as shown before $X$ contains
a curve $E$ such that $E.H = 3$ and $E^2 = 0$.  In this case we let $M = X$ be the double cover given by the degree two linear system $|H-E|$.

 Theorem~\ref{thm:muk} shows that for a generic $X$, i.e. $X$ a complete intersection and $\rho(X)=1$, $M$ is isomorphic to the moduli space of sheaves with Mukai vector 
 $v = (2, H, 2)$.  This is a very special result. In general it is not even possible to check whether a moduli space is
non-empty what to speak of geometric realisations.  In this regard the moduli space $\cM(2,H,2)$ is quite unique and we prove several interesting results.  For instance we show that it is non-empty. We also determine exact conditions for compactness, non-compactness etc.

  Although in general $X$ is a complete intersection, the
double cover $M$ associated to $X$ is not always smooth.  In such situations, $M$ is birational to the moduli space $\M(2,H,2)$ as we prove later.  For
example, if we let $X$ be the desingularisation of a quartic Kummer surface in $\P^3$
then $X$ is a complete intersection of three quadrics in $\P^5.$
The associated double cover $\phi:M \rightarrow \P^2$ is branched along six
lines meeting in 15 points and hence is
singular, \cite{GH}~Chapter 6.  In fact $X$ is isomorphic to the desingularisation of $M$. 
\cite{Beauville} VIII, Problem 9.

We first state some results on linear systems on K3 surfaces which we use below, \cite{Mayer} \cite{SaintDonat}.   
The following proposition is essentially Bertini's Theorem for linear systems on $K3$ surfaces.
\begin{proposition}[Proposition~2.6 \cite{SaintDonat}]
Let $|L|$ be an invertible sheaf on a K3 surface $X$ such that $|L|$ is non empty, $|L|$ has no fixed components and $L^2 > 0$.  Then the generic member of $L$ is an irreducible curve of genus $\frac{1}{2}L^2 + 1$ and $h^1(L) = 0$.
\end{proposition}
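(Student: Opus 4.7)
The plan is to establish irreducibility of the generic member first (the heart of the matter), and then derive both the genus formula and the vanishing $h^1(L) = 0$ as short consequences.

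First I would show that the rational map $\phi_{|L|} : X \dashrightarrow \P^{h^0(L)-1}$ has two-dimensional image. If the image were a curve, then $L$ would be numerically a positive multiple of the fibre class of the induced pencil, forcing $L^2 = 0$ and contradicting the hypothesis. So $\phi_{|L|}$ is generically finite onto a surface, and after resolving the base locus by a birational model $\pi : \tilde X \to X$ the induced morphism $\tilde\phi : \tilde X \to \P^{h^0(L)-1}$ is a generically finite morphism onto its image; Stein factorisation together with the classical Bertini theorem then shows that the generic hyperplane preimage, and hence the generic $C \in |L|$, is connected.

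Upgrading connectedness to irreducibility I would carry out by contradiction. Suppose the generic $C \in |L|$ decomposed as $C = D_1 + D_2$ with both $D_i$ non-zero effective. The discreteness of $\Pic(X)$ forces the classes of $D_1$ and $D_2$ to be fixed, so the natural map $|D_1| \times |D_2| \to |L|$ is dominant, giving
\[
\dim |L| \;\le\; \dim |D_1| + \dim |D_2|.
\]
Since $L$ is effective and non-trivial we have $H^2(L) = H^0(-L)^{\vee} = 0$, and analogously $H^2(D_i) = 0$. Riemann-Roch on $X$ then translates the above inequality into a numerical relation among $L^2, D_1^2, D_2^2, D_1\cdot D_2$ and the $h^1(D_i)$ which, combined with the Hodge index theorem and the no-fixed-components hypothesis (forcing the $D_i$ to move, hence $D_i^2 \geq 0$ and $D_1 \cdot D_2 \geq 0$), is inconsistent with $L^2 > 0$. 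This is the main obstacle: everything else reduces to standard K3 cohomology once irreducibility is secured.

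The genus formula is then immediate from adjunction: for the irreducible $C \in |L|$,
\[
2 p_a(C) - 2 \;=\; C \cdot (C + K_X) \;=\; L^2,
\]
so $p_a(C) = \tfrac{1}{2}L^2 + 1$ since $K_X = 0$. Finally, for the vanishing, I would use the restriction sequence
\[
0 \to \O_X(-L) \to \O_X \to \O_C \to 0
\]
for an irreducible (hence connected) generic $C$. Since $L$ is effective and non-zero, $H^0(\O_X(-L)) = 0$; connectedness of $C$ gives $H^0(\O_C) = \C$, so the map $H^0(\O_X) \to H^0(\O_C)$ is an isomorphism, and from $H^1(\O_X) = 0$ on a K3 the long exact sequence forces $H^1(\O_X(-L)) = 0$. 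Serre duality with $K_X = 0$ then yields $H^1(L) \cong H^1(\O_X(-L))^{\vee} = 0$.
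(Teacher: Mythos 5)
The paper does not actually prove this statement; it is quoted directly from Saint-Donat (Proposition~2.6 of \cite{SaintDonat}) as a known input, so there is no internal proof to compare against. Judged on its own, your outline has the right shape, and the two closing steps are fine: adjunction with $K_X=0$ gives the genus, and the restriction sequence $0 \to \O_X(-L) \to \O_X \to \O_C \to 0$ together with $h^1(\O_X)=0$ and Serre duality gives $h^1(L)=0$ once an irreducible (reduced, connected) member is in hand. The gaps are in the two steps you yourself flag as the heart of the matter. In the first, you assert that a one-dimensional image forces $L$ to be a multiple of a fibre class and hence $L^2=0$. That is only true for a base-point-free pencil: the members of a pencil with base points have positive self-intersection on $X$ (the square-zero statement holds on the blow-up $\tilde X$, and pushing forward reinstates $\sum m_i^2$). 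To close this you need the dimension count: since $q(X)=0$ the pencil is rational, so the generic member of $|L|$ is a sum of $k\ge 2$ linearly equivalent divisors $F$ and $\dim|L|\le k$, while Riemann--Roch with $h^2(L)=0$ and $\chi(\O_X)=2$ gives $\dim|L|\ge 1+k^2F^2/2$; together these force $F^2=0$ and hence $L^2=0$.

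The second and more serious gap is that the ``upgrade to irreducibility'' does not close. Rewriting $\dim|L| \le \dim|D_1|+\dim|D_2|$ via Riemann--Roch ($\dim|D_i| = 1 + D_i^2/2 + h^1(D_i)$ and $\dim|L| = 1 + L^2/2 + h^1(L)$) yields exactly
$D_1\cdot D_2 + h^1(L) \le 1 + h^1(D_1)+h^1(D_2)$.
The right-hand side is not under control --- bounding $h^1(D_i)$ for a moving effective class is essentially the statement you are trying to prove --- and even granting $h^1(D_i)=0$, the configuration $D_1^2=D_2^2=0$, $D_1\cdot D_2=1$, $L^2=2$ satisfies every constraint you impose (connectedness, the $D_i$ moving, $D_i^2\ge 0$, Hodge index) with no numerical inconsistency. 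So the claimed contradiction with $L^2>0$ does not materialise; ruling out such residual cases requires showing they force a fixed component, which your argument never does. The standard route (and Saint-Donat's) is to invoke the full Bertini theorem in characteristic zero: the generic member of a linear system without fixed components is irreducible unless the system is composite with a pencil, which reduces irreducibility entirely to the (corrected) first step and makes the decomposition count unnecessary.
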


\begin{theorem}[Theorem~3.1 \cite{SaintDonat}]
Let $C$ be an irreducible curve on a K3 surface $X$ such that $C^2 > 0$.  Then 
$|C|$ is base-point free.
\end{theorem}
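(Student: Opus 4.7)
\emph{Plan.} The strategy uses adjunction on $X$, the Riemann--Roch theorem on the K3 surface, and the base-point-freeness of the canonical system on curves of genus at least two, to derive a contradiction from the existence of a base point of $|C|$.

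\emph{Step 1 (No fixed components).} First I would show $|C|$ has no fixed component. Since $C$ is effective with $C^2 > 0$, the sheaf $\O_X(-C)$ has no global sections, so by Serre duality $h^2(\O_X(C))=0$. Riemann--Roch on the K3 surface $X$ then yields
$$h^0(\O_X(C)) \geq \chi(\O_X(C)) = 2 + C^2/2 \geq 3,$$
so $\dim |C| \geq 2$. Any fixed component $F$ of $|C|$ is contained in every member, in particular in $C$ itself; irreducibility of $C$ then forces $F \in \{0, C\}$, and $F=C$ is ruled out because it would collapse $|C|$ to a single point.

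\emph{Step 2 (Cohomological restriction).} Suppose for contradiction that $p$ is a base point of $|C|$. By the preceding Bertini-type proposition the generic member $C_0 \in |C|$ is integral of arithmetic genus $g = 1 + C^2/2 \geq 2$, and $H^1(X, \O_X(C))=0$. Since $C_0$ is a Cartier divisor on the smooth surface $X$ it is Gorenstein, so $\omega_{C_0}$ is a line bundle, and adjunction combined with $K_X = 0$ gives the canonical identification $\omega_{C_0} \simeq \O_{C_0}(C_0)$. The standard short exact sequence
$$0 \to \O_X \to \O_X(C) \to \omega_{C_0} \to 0,$$
together with $H^1(\O_X) = 0$, produces a surjection
$$H^0(X, \O_X(C)) \twoheadrightarrow H^0(C_0, \omega_{C_0}).$$

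\emph{Step 3 (Contradiction via the canonical system).} Because $p$ is a base point of $|C|$ we have $p \in C_0$, and every global section of $\O_X(C)$ vanishes at $p$. By the surjection of Step 2 (and because evaluation at $p$ is compatible with restriction to $C_0$), every section of $\omega_{C_0}$ vanishes at $p$, so $p$ is a base point of the canonical linear system $|\omega_{C_0}|$. For integral Gorenstein curves of arithmetic genus $\geq 2$, $|\omega_{C_0}|$ is base-point free (Noether's theorem, extended to the Gorenstein case via Serre duality on $C_0$). This contradicts what we just established, completing the proof.

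\emph{Main obstacle.} Bertini only guarantees smoothness of the generic member $C_0$ away from the base locus of $|C|$, so when $|C|$ has multiplicity $\geq 2$ at $p$, the curve $C_0$ is singular precisely at $p$ and the naive canonical-system argument must be read in the Gorenstein sense. The technical heart of the matter is then the implication
$$p\text{ is a base point of }|\omega_{C_0}| \iff h^0(\O_{C_0}(p)) = 2,$$
which is a routine consequence of Riemann--Roch and Serre duality on an integral Gorenstein curve; the second condition forces $C_0$ to be birational to $\mathbb{P}^1$ (a singular rational curve), and ruling out that pathology is the main remaining delicate step, handled by analyzing the blowup of $X$ at $p$ together with the intersection-theoretic constraint $m_p(|C|)^2 \leq C^2$.
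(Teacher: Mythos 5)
The paper does not actually prove this statement --- it is quoted verbatim as Theorem~3.1 of Saint-Donat's \emph{Projective models of K3 surfaces}, so there is no internal proof to compare against. Your argument is the standard one from the literature (and, up to presentation, the one Saint-Donat and Mayer give): Steps~1 and~2 are correct and complete --- Riemann--Roch plus $h^2(\O_X(C))=h^0(\O_X(-C))=0$ gives $\dim|C|\geq 2$, irreducibility kills fixed components, and the sequence $0\to\O_X\to\O_X(C)\to\omega_{C_0}\to 0$ with $h^1(\O_X)=0$ shows that $|C|$ cuts out the complete dualizing system on a generic member $C_0$, so a base point of $|C|$ would be a base point of $|\omega_{C_0}|$.

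The only point that needs care is the one your closing paragraph wavers on. If you take as known the theorem (Catanese, going back to Rosenlicht; classical Noether at smooth points) that the dualizing sheaf of an \emph{integral} Gorenstein curve of arithmetic genus $\geq 1$ is globally generated, then Step~3 finishes the proof and the ``main obstacle'' paragraph is superfluous. But as written, that paragraph simultaneously invokes this theorem and then declares the singular-base-point case an unresolved ``delicate step'' to be handled by an unexecuted blowup argument --- so you have not actually closed the one case where the content lies, namely a base point $p$ at which the generic member $C_0$ is singular. Two corrections would tidy this up. First, at a singular point the criterion should be phrased with $h^0\bigl(\Hom_{\O_{C_0}}(\mathfrak{m}_p,\O_{C_0})\bigr)=2$ rather than $h^0(\O_{C_0}(p))=2$, since $\mathfrak{m}_p$ is not invertible there. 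Second, the conclusion of that criterion is stronger than you state: because $C_0$ is Gorenstein, $\Hom(\mathfrak{m}_p,\O_{C_0})/\O_{C_0}$ has length one, so a nonconstant section gives a finite \emph{birational} morphism $C_0\to\P^1$, which is an isomorphism since $\P^1$ is normal; this contradicts $p_a(C_0)=1+C^2/2\geq 2$ outright. No blowup of $X$ and no multiplicity bound $m_p^2\leq C^2$ are needed --- that proposed route is a red herring. With the criterion stated correctly and this one-line conclusion supplied, your proof is complete and coincides with the cited one.
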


These two results imply that if $|L|$ has no fixed components and $L^2 > 0$ then 
$|L|$ is base-point free.

We also need the following result on hyperelliptic curves in K3 surfaces.

Given a linear system $|L|$ on $X$ with generic element $L$, let $\phi_{L}$ denote the
map given by the line bundle $L$, $\phi_L: X \rightarrow \P^{h^0(L)-1}$.
\begin{theorem}[Theorem~5.6 \cite{SaintDonat}]
Let $B$ be a irreducible curve on $X$ such that ${p_a}(B) = 2$ and let $L = 2B$.  Then 
$\phi_{L}(X)$ is the Veronese surface in $\P^5$.  In fact $\phi_{L} = {v_2}.{\phi_B}$, 
where $v_2 : \P^2 \rightarrow \P^5$, is the Veronese embedding and 
${\phi_B}: X \rightarrow \P^2$ is a 2:1 map, branched over a sextic.
\end{theorem}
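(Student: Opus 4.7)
The plan is to first work out the basic numerology forced by the genus hypothesis, then identify $\phi_B$ as a degree $2$ map onto $\P^2$, and finally show that $\phi_L$ factors as the Veronese composed with $\phi_B$ by a dimension count on squares of sections.

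Since $X$ is K3 and $B$ is an irreducible curve with $p_a(B)=2$, adjunction will give $B^2 = 2p_a(B)-2 = 2$, hence $L^2 = 8$. The curve $B$ is big and nef, so Kodaira vanishing (or Saint-Donat's Proposition~2.6 quoted above) will force $h^i(B) = h^i(2B) = 0$ for $i>0$. Riemann-Roch then yields $h^0(B) = 3$ and $h^0(L) = 6$. The cited Theorem~3.1 of \cite{SaintDonat} shows that $|B|$ is base-point free, so there is a morphism $\phi_B : X \to \P^2$.

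To analyze $\phi_B$ I would use the short exact sequence
$$0 \to \O_X \to \O_X(B) \to \O_B(B) \to 0,$$
where $\O_B(B) \simeq K_B$ by adjunction. Since $H^1(\O_X) = 0$ on a K3, the restriction $H^0(X,B) \to H^0(B,K_B)$ is surjective onto the $2$-dimensional canonical system of the smooth genus $2$ curve $B$. Hence $\phi_B|_B$ is the canonical map of $B$, which is $2{:}1$ onto a line in $\P^2$. Because $B^2 = 2 > 0$, $\phi_B$ cannot contract $X$ to a curve, so $\phi_B(X)$ has dimension $2$ and must be all of $\P^2$. The projection formula $B^2 = \deg(\phi_B)\cdot \deg \phi_B(X)$ now forces $\deg(\phi_B) = 2$. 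The Hurwitz formula together with $K_X = 0$ identifies the branch locus with a divisor in $|{-2K_{\P^2}}| = |\O_{\P^2}(6)|$, that is, a plane sextic.

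For the factorization $\phi_L = v_2 \circ \phi_B$, I would pick a basis $s_0, s_1, s_2$ of $H^0(X,B)$. The six products $s_i s_j$ lie in the $6$-dimensional space $H^0(X, 2B)$. If $\sum c_{ij} s_i s_j = 0$, then the plane quadric $\sum c_{ij} x_i x_j$ vanishes identically on $\phi_B(X) = \P^2$, forcing all $c_{ij}=0$. Hence the products form a basis of $H^0(X, L)$, so $\phi_L$ is given in suitable coordinates by $[s_0^2 : s_0 s_1 : \cdots : s_2^2] = v_2 \circ \phi_B$; in particular $\phi_L(X) = v_2(\P^2)$ is the Veronese surface. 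The main obstacle I anticipate is ruling out the pathological alternatives for $\phi_B$ (image contained in a lower-dimensional subvariety, or $\phi_B$ failing to be generically $2{:}1$), but all of these will be eliminated by the surjectivity of the restriction map $H^0(X,B) \to H^0(B,K_B)$, which relies crucially on the K3 hypothesis $H^1(\O_X) = 0$.
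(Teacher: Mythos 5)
This statement is quoted in the paper as Theorem~5.6 of Saint-Donat's \emph{Projective models of K3 surfaces} and is not proved there, so there is no in-paper argument to compare against. Your proof is correct and is essentially the standard (and original) one: adjunction gives $B^2=2$, Riemann--Roch with vanishing gives $h^0(B)=3$ and $h^0(2B)=6$, the restriction sequence identifies $\phi_B|_B$ with the canonical map of a genus-$2$ curve to force $\deg\phi_B=2$ onto $\P^2$ with sextic branch divisor, and the injectivity of $H^0(\P^2,\O(2))\to H^0(X,2B)$ plus the dimension count $6=6$ yields $\phi_L=v_2\circ\phi_B$. The only point worth polishing is the Hurwitz step: $\phi_B$ need not be finite (it may contract $(-2)$-curves orthogonal to $B$), so one should pass through the Stein factorization to a possibly singular double cover, where the branch divisor still lies in $|\O_{\P^2}(6)|$; this does not affect the conclusion.
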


We first prove that the Mukai vector $(2,H,2)$ is primitive, i.e. $v$ is not a multiple of another element of $H^*(X, \Z)$.

\begin{theorem}\label{thm:v-primitive}
Let $X$ be a K3 surface of degree 8 in $\P^5$.  Let $H$ denote 
the hyperplane class $\O_X(1)$.  Let $v =(2,H,2)$ in $\tilde{H}^{1,1}(X,\Z).$
Then $v$ is {\rm primitive}.
\end{theorem}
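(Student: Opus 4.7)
The plan is to reduce the statement to the primitivity of the hyperplane class $H$ in $\Pic(X)$, and then to rule out $H = 2L$ by showing this would force the embedding $\phi_{|H|}$ to factor through a Veronese surface.

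First I would observe that if $v = nw$ for some $w = (w_0, w_1, w_2) \in \tilde{H}^{1,1}(X,\Z)$ and integer $n \geq 2$, then comparing components forces $n \mid 2$ and $n \mid H$ in $\Pic(X)$, so $n = 2$ and $w_1 = L := H/2 \in \Pic(X)$. Thus the theorem reduces to showing that $H$ is not divisible by $2$ in $\Pic(X)$. Suppose, for contradiction, $H = 2L$. Then $L^2 = 2$ and $L \cdot H = 4$; Riemann--Roch gives $\chi(L) = 3$, and since $H \cdot (-L) = -4 < 0$ rules out $-L$ effective, Serre duality forces $h^2(L) = 0$ and $h^0(L) \geq 3$, so $L$ is effective.

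Next I would show that $|L|$ has no fixed component. Suppose on the contrary $L = M + F$ with $F \neq 0$ fixed and $M$ mobile, so $h^0(M) = h^0(L) \geq 3$ and $F \cdot H \geq 1$, hence $M \cdot H \leq 3$. The Hodge index inequality $(M \cdot H)^2 \geq H^2 \cdot M^2 = 8 M^2$ forces $M \cdot H \geq 4$ whenever $M^2 \geq 2$, contradicting $M \cdot H \leq 3$. If instead $M^2 = 0$, then the fixed-component-free $|M|$ realises $X$ as an elliptic fibration $X \to \P^1$ with fibre class $E$, so $M = aE$ with $a \geq 2$ (from $h^0(aE) = a+1$); the bound $aE \cdot H \leq 3$ forces $E \cdot H = 1$, but then the very ample $\phi_{|H|}$ would embed the elliptic curve $E$ as a line in $\P^5$, which is impossible. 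Hence $|L|$ has no fixed component, and Proposition 2.6 together with Theorem 3.1 of Saint-Donat give that a general member $B \in |L|$ is a smooth irreducible curve with $B^2 = 2$ and $p_a(B) = 2$.

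Finally I would apply Theorem 5.6 of Saint-Donat to $B$: the map $\phi_{|2B|} \colon X \to \P^5$ equals $v_2 \circ \phi_{|B|}$, where $\phi_{|B|} \colon X \to \P^2$ is a $2{:}1$ cover and $v_2 \colon \P^2 \to \P^5$ is the Veronese embedding. Since $|H| = |2L| = |2B|$, this makes $\phi_{|H|}$ have generic degree at least $2$ onto its image, contradicting that $H$ is very ample. Hence no such $L$ exists and $v$ is primitive. The hardest step is the fixed-component analysis, and in particular the $M^2 = 0$ case, where the very ampleness of $H$ is used essentially via the impossibility of embedding an elliptic curve into a line.
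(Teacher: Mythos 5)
Your proof is correct and follows essentially the same route as the paper's: reduce primitivity to showing $H \neq 2L$ in $\Pic(X)$, show such an $L$ would be effective with $h^0(L)\geq 3$ and $|L|$ fixed-component free, and then invoke Saint-Donat's Theorem 5.6 to see that $\phi_{|H|}=v_2\circ\phi_{|L|}$ is $2{:}1$ onto a Veronese surface, contradicting very ampleness of $H$. The only divergence is the fixed-component step, where you use the Hodge index inequality together with an elliptic-pencil analysis in the $M^2=0$ case, in place of the paper's Mayer-style computation $D_f^2+2D_f\cdot D=0$; both arguments work, and yours is, if anything, the more carefully justified of the two.
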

\begin{proof}
The  linear system $|H|$ is base point free since it is very ample.  If not we can replace it with a base-point free, very ample linear system $|H'|$ by removing the fixed components.  The generic element of $|H|$ is a smooth curve of degree 8
in $\P^4.$
Suppose for contradiction that $v$ is not primitive, i.e. $v=(2,2A,2)$ for some element
$A \in N_X$.  Then $|H| = |2A|$ and $A^2=2$.  Since $2A =H$
is very ample and it follows from the definition that $A$ is ample.  This implies that 
$A.C > 0$ for all irreducible curves $C$ in $X$.  Since $A^2 = 2 > 0$, it follows from 
Mumford's vanishing theorem and Serre Duality, that $h^1(-A) = h^1(K_X + A) = h^1(A)  0$. Since $A$ is effective $h^0(-A) = h^2(A) = 0$ so the 
Reimann-Roch theorem implies that $h^0(A) - h^1(A) + h^0(-A) =  2 + A^2/2 = 3$. 

We now show that $|A|$ has no fixed components. The argument is an adaptation of the proof of Alan.~L.~Mayer's result on linear systems of K3 surfaces, Chapter~5~\cite{Friedman}.  Suppose that $A = D_f + D$ where $D_f$ 
is the fixed component.  Then by construction $h^0(D_f) = 1, h^0(A) = h^0(D) = 2 + A^2/2 = 3$.  Then 
\begin{eqnarray*}
D^2  =   A^2  = (D_f + D)(D_f + D) & = & {D_f}^2 + 2{D_f}.D + D^2 \\
\Rightarrow {D_f}^2 + 2{D_f}.D & = & 0 \\
\Rightarrow  {D_f}(D_f + D) + D_f.D & = & 0 \\
\Rightarrow {D_f}.A + {D_f.D} & = & 0 \\
\end{eqnarray*}
Since $A$ and $D$ are effective ample divisors it follows that ${D_f}.A \geq 0$ and 
$ {D_f.D} \geq 0$.  Hence 
\begin{eqnarray*}
{D_f}.A = {D_f}.D & = & 0 \\
\Rightarrow {D_f}({D_f + D}) & = & {D_f}^2 + {D_f}.D = 0 \\
\Rightarrow {D_f}^2 & = & 0
\end{eqnarray*}

If $D_f \neq 0$ then $-{D_f}$ is not effective so $h^0(-{D_f}) = h^2({D_f}) = 0$.  But then 
by Riemann-Roch it follows that $h^0(D_f) \geq 2 + {D_f}^2/2 > 1$ which is a contradiction since we assumed ${D_f}$ is a fixed component and hence satisfies  $h^0({D_f}) = 0$.  This shows that $|A|$ has no fixed components.  It now follows essentially from Bertini's theorem
(Proposition~2.6~\cite{SaintDonat}) that the generic element of  $|A|$  is an irreducible curve of arithmetic genus 2.  Then Theorem~3.1~ 
\cite{SaintDonat} shows that $|A|$ is base point free.  Since $H = 2A$ where  the generic element $A$ of $|A|$ is an irreducible curve with ${p_a}(A) = 2$ it follows from Proposition~5.6~\cite{SaintDonat} that  the generic element $H$ of $|H|$ is hyperelliptic.  Then the restriction of $|H|$ to a generic element $H$ is the canonical system 
on $H$ and is not very ample.  The corresponding map to $\P^5$ is given by 
$\phi_H = v_2 \phi_A$.  The generic element $H$ maps 2:1 onto a line in $\P^2$, followed by the Veronese embedding of degree 4 in $\P^5$. This
 is a contradiction since we assumed $X$ to be a smooth surface of degree $8$ in $\P^5$ with embedding linear system $|H|.$
\end{proof}

We now prove some nice properties of the moduli space  $\cM_H(2,H,2).$ 

\begin{theorem}\label{thm:M-nonempty}
Let $X$ be a K3 surface of degree 8 in $\P^5.$  Let $H$
be a hyperplane section.  
Let $\M_H(2,H,2)$ be the moduli space of stable sheaves (with respect
to $H$) on $X$ with Mukai vector $(2,H,2)$.  Then we have the following: 
\begin{enumerate}
\item The moduli space $\M_H(2,H,2)$ is non-empty.

 \item The moduli space $\cM$ is compact if and only if $X$ does not 
contain an irreducible curve $f$ such that $f^2=0$ and $H.f=4$.

\end{enumerate}
\end{theorem}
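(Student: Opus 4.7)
For part (1) I would treat the complete and non-complete intersection cases separately. When $X$ is the complete intersection of three quadrics, the spinor bundle construction of Section~\ref{sec:rk2vb} produces a rank two sheaf on $X$ with Mukai vector $(2,H,2)$; the $\mu$-stability established there for the Picard rank one case persists for arbitrary complete intersection $X$ by the openness of $\mu$-stability in the $19$-dimensional family of degree $8$ complete intersection K3s together with a properness argument on the relative moduli space. When $X$ is not a complete intersection, Proposition~\ref{compint} realises it as a bidegree $(2,3)$ divisor in $\P^1\times \P^2\subset \P^5$; here I would apply the Serre construction, picking a length-four subscheme $Z\subset X$ in general position so that the Cayley--Bacharach property for $|H|=|H+K_X|$ is satisfied, and take a non-trivial extension
\[
0\to \O_X\to F\to I_Z(H)\to 0.
\]
Then $v(F)=(2,H,2)$, local freeness of $F$ follows from Cayley--Bacharach, and $H$-stability is checked by eliminating destabilising sub-line bundles $\O_X(D)$: such a $D$ would be effective with $D\cdot H\ge 4$ and $H-D$ effective, but the Picard lattice described in Proposition~\ref{compint} admits no integer class satisfying $D\cdot H=4$ within this cone.

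\smallskip

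For part (2), the Gieseker moduli $\cM^{ss}_H(v)$ is projective and $\cM=\cM^s_H(v)$ is an open subset, so $\cM$ is compact if and only if every $H$-semistable sheaf with Mukai vector $v=(2,H,2)$ is already stable. If $X$ contains an irreducible $f$ with $f^2=0$ and $f\cdot H=4$, then $H-f$ satisfies $(H-f)^2=0$ and $(H-f)\cdot H=4$, and Riemann--Roch gives $\chi(H-f)=2$ while $h^2(H-f)=h^0(f-H)=0$ (since $(f-H)\cdot H<0$ with $H$ ample), so $H-f$ is effective and $\O_X(f)\oplus \O_X(H-f)$ is a strictly semistable sheaf of Mukai vector $v$, showing $\cM$ is non-compact. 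Conversely, assuming $\cM$ non-compact, pick a strictly semistable $E$ of Mukai vector $v$; both Jordan--H\"older factors must have rank one (a rank two factor would require a rank zero companion, contradicting torsion-freeness), so they take the form $I_{Z_i}\otimes \O_X(D_i)$ with $D_1+D_2=H$, $D_i\cdot H=4$, $D_i$ effective, and $\length(Z_1)+\length(Z_2)=(D_1^2+D_2^2)/2$. The Hodge index theorem applied against the positive class $H=D_1+D_2$ forces $D_i^2\le 0$, hence $D_i^2=0$ and $\length(Z_i)=0$, and the Bertini-type results for K3 linear systems quoted before Theorem~\ref{thm:v-primitive}, combined with the decomposition of an effective class of square $0$ and positive $H$-degree into its movable elliptic pencil part plus a chain of $(-2)$-curves, lets me extract an irreducible genus one curve $f\in |D_i^{\mathrm{mov}}|$ with $f^2=0$ and $f\cdot H=4$.

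\smallskip

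For the ``moreover'' clause, suppose $E\in \cM$ were not locally free: the short exact sequence $0\to E\to E^{\vee\vee}\to Q\to 0$ has $Q$ a non-zero torsion sheaf of length $\ell\ge 1$, giving $v(E^{\vee\vee})=(2,H,2+\ell)$ and $(v(E^{\vee\vee}),v(E^{\vee\vee}))=8-4(2+\ell)=-4\ell\le -4$. Since $E$ is simple (Gieseker stability implies simplicity) and any endomorphism of $E^{\vee\vee}$ restricts uniquely to one of $E$ across the torsion cokernel, $E^{\vee\vee}$ is also simple; but any simple sheaf on a K3 surface satisfies $(v,v)\ge -2$, because $\dim \Ext^1(E^{\vee\vee},E^{\vee\vee})=(v(E^{\vee\vee}),v(E^{\vee\vee}))+2\ge 0$ by Serre duality combined with Riemann--Roch, contradicting $(v(E^{\vee\vee}),v(E^{\vee\vee}))\le -4$. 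Hence $E=E^{\vee\vee}$ is locally free. I expect the main obstacle of the whole proof to be the extraction step in the converse direction of part (2): translating the inequalities $D_i^2\le 0$, $D_i\cdot H=4$, $D_1+D_2=H$ with both $D_i$ effective into the existence of an honestly irreducible $f$ requires a careful analysis of the effective cone of $X$, using that on a K3 it is generated by $(-2)$-curves together with nef isotropic classes whose complete linear systems cut out elliptic fibrations.
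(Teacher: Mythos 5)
Your part (2) is essentially the paper's argument: reduce compactness to ``every semistable sheaf is stable'' via Mukai, show any strictly semistable $E$ has two rank-one Jordan--H\"older factors $I_{Z_i}\otimes\O(D_i)$ of slope $4$, use the Hodge index theorem and primitivity of $(2,H,2)$ to force $D_i^2=0$ and $\length(Z_i)=0$, and extract an irreducible genus-one curve using Saint-Donat's bound $H\cdot f\ge 3$ for non-hyperelliptic $|H|$ (the paper does exactly this, phrased through $v(F_1)^2=-2$ and simplicity of the stable factor rather than through non-negativity of the lengths). The converse via $\O(f)\oplus\O(H-f)$ also matches. Your local-freeness addendum is not part of the quoted statement (the paper proves it separately in Theorem~\ref{thm:M=Mslope}, by a dimension count on $\cM(2,H,2-\length(Z))$ rather than your simplicity argument, but both work).

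Part (1) is where you genuinely diverge from the paper, and it is where your proposal has a gap. The paper's proof of non-emptiness is a one-liner: primitivity of $v=(2,H,2)$ (their Theorem~\ref{thm:v-primitive}) plus Mukai's general existence theorem for primitive isotropic Mukai vectors (Theorem 5.4 of \cite{Mukai2}), which covers both the complete-intersection and non-complete-intersection cases uniformly. Your replacement construction in the non-CI case does not work as stated: for a length-four subscheme $Z$ in \emph{general} position, $Z$ imposes independent conditions on $|H|$ (since $h^0(\O_X(H))=6\ge 4$), so $H^1(I_Z(H))=0$ and hence $\Ext^1(I_Z(H),\O_X)\cong H^1(I_Z(H))^\vee=0$; there is no non-trivial extension at all, and the Cayley--Bacharach property fails (a curve in $|H|$ through three of the four points need not contain the fourth). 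To run the Serre construction you would have to put $Z$ in special position, and then your stability check --- which in any case leans on an unproved description of $\Pic(X)$ beyond what Proposition~\ref{compint} supplies, and which cannot be true unconditionally since a non-CI $X$ may well contain a class $D$ with $D\cdot H=4$ --- would need to be redone. Your CI-case argument is also incomplete: openness of ($\mu$-)stability in the $19$-dimensional family gives an open locus containing the generic point, not every complete-intersection fibre, and properness of the relative \emph{semistable} moduli space only produces semistable limits. The clean fix is the paper's: prove primitivity of $v$ and quote Mukai's existence theorem.
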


\begin{proof}
By Theorem~\ref{thm:v-primitive}, $v = (2,H,2)$ is primitive, so we can apply Theorem
5.4 \cite{Mukai2}, and it follows that $M_H(2,H,2)$ is non-empty.  
This proves part (1).

Now we prove statement (2);
that $\M=\M_H(2,H,2)$ is compact.  According to Proposition
4.1 \cite{Mukai2}, $\M$ is compact if and only if every semistable sheaf $E$
with $v(E) = (2,H,2)$ is stable.  We prove in the subsequent paragraphs that every semistable
sheaf with $v(E) =(2,H,2)$ is stable.

Suppose for contradiction that there exists a sheaf $E$ such that $E$
is semistable but not stable.  Let
$$ 0 = E_0 \subset E_1 \subset \cdots \subset E_n =E$$
be a J.H.S. filtration of $E$ 


Let $F_i = E_i/E_{i-1}$.  Then the $F_i$ are stable and have the same
slope as $E$, so $v(F_i)=(r_i,\l_i,s_i)$ where $r_i/s_i = 2/2=1$
and $\l_i.H/r_i = H.H/2 =4$ by Proposition 2.19 and Remark 2.20 \cite{Mukai2}.
So the only possibility is that there is a rank one subsheaf $E_1$ of $E$
such that we have $$0 \rightarrow E_1 \rightarrow E \rightarrow E/E_1 = F_1
\rightarrow 0.$$
Then $v(F_1) = (1,\l_1,1)$ and $F_1$ is stable and 
$$\mu(F_1) = \mu(E) \Rightarrow H.(\l_1-H/2) =0 \Rightarrow H.\l_1 =4.$$  Since $H$ is ample and $H.(\l_1-H/2)=0,$
the Hodge Index Theorem implies that either $\l_1-H/2 =0$
in $N_X$ or $(\l_1 -H/2)^2<0$.
But $\l_1 -H/2 =0$ would give $H=2\l_1$ which is not possible
by Theorem~\ref{thm:v-primitive}, so $(\l_1-H/2)^2<0$.
Now $v(F_1)^2 = \l_1^2 -2$, and  $(\l_1 -H/2)^2 = \l_1^2 - \l_1.H + (H/2)^2 = \l_1^2 - 2 < 0,$ so we get that 
 $v(F_1)^2<0$.  
Also, $$v(F_1)^2 = - \chi(F_1,F_1^\vee) = - h^0(F_1 \otimes F_1^\vee)
+h^1(F_1 \otimes F_1^\vee)-h^2(F_1 \otimes F_1^\vee).$$
Since $F_1$ is stable we know that it is simple and therefore $h^0(F_1 \otimes F_1^\vee) = h^2(F_1 \otimes F_1^\vee)=1.$ Then $-2 \leq v(F_1)^2 < 0.$  Since the intersection pairing on $\H^2(X, \Z)$ is even for a K3 surface, 
we get $v(F_1)^2 = -2,$ which implies $\l_1^2=0.$
Now Riemann-Roch and Serre duality give
$$ \chi(\l_1) = h^0(\l_1) - h^1(\l_1) + h^0(-\l_1) = 2$$ which implies that either $\l_1$ is effective or $-\l_1
$ is effective.  Since $H$ is ample and $H.\l_1 >0$
we see that $\l_1$ is effective.
So $| \l_1 | =|kf|$ for some $f \in N_X$ such that $f^2=0$
and $p_a(f)=1$.  Then $H. \l_1= k.H.f=4$.  Since the generic
element of $|H|$ is non-hyperelliptic it follows that 
$H.f \geq 3$ for any $f$ such that $p_a(f) =1$ by \cite{SaintDonat} Section 7, Remark~7.1.
So $k=1$ and the generic element of $| \l_1 |$ is an irreducible
curve with genus 1.

Since $v(F_1)^2 = -2$ and $F_1$ is a rank $1$ torsion free sheaf, we will
show that 
$F_1$ is locally free. Let $c_1, c_2$ denote the Chern invariants of $F_1.$ Then we have
\begin{eqnarray*}
v(F_1)&  = & (1, c_1, c_1^2-c_2+ 1)\\
& = & (1,\l_1,0-c_2+1) \\
 & = & (1, \l_1,1). 
\end{eqnarray*}
So $c_2(F_1)=0$
If $F_1 = \O(\l_1) \otimes I_Z$ for some zero dimensional
subscheme $Z,$ then $c_2(F_1) = \l_1^2 +\length(Z).$
which implies $\length(Z)=0$ and that $F_1$ is a line bundle.

So if $F_1$ exists, then we have the extension 
\begin{equation}\label{extension}
0 \rightarrow E_1 \rightarrow E \rightarrow F_1 \rightarrow 0.\end{equation}
Any torsion free rank 2 sheaf $E$ on a surface is of the form
$$ 0 \rightarrow L_1 \otimes I_{Z_1} \rightarrow E \rightarrow 
L_2 \otimes I_{Z_2} \rightarrow 0$$
where $L_1,L_2$ are line bundles and $Z_1$ and $Z_2$ are zero dimensional
schemes, \cite{HuybrechtsLehn} Equation 5.1, Section 5.1, p.123.  We know that $F_1,$ in the extension (\ref{extension}) above, is a line bundle so $E_1 = L \otimes I_{Z}$ for some $L = \O(\sigma)$
with $\sigma \in N_X$ and $Z$ a zero dimensional subscheme.
Now $c_1(E_1 ) = c_1(L) = \sigma$
and $c_2(E_1) = \length(Z)$.
So 
\begin{eqnarray*}
c_1(E) & = & c_1(F_1)+c_1(E_1) \\
\Rightarrow H &=& \l_1 + \sigma \\
\mbox{and} & \ \ & \\
c_2(E) &=& c_1(F_1) \cdot c_1(E_1) + \length(Z)\\
\Rightarrow 4 & = & \l_1 \cdot \sigma + \length(Z)
\end{eqnarray*}

Since
$\H^2 = (\l_1 + \sigma)^2 = 8, {\l_1}^2 = 0$ and 
$H \cdot \l_1 = 4$ we get $\l_1 \cdot \sigma = \l_1\cdot \sigma + \sigma^2 = 4.$
So $\sigma^2=0$.
Now $c_2(E) = 4 = \sigma \cdot \l_1 + \length(Z)$ and  $\sigma \cdot \l_1= 4$ so $\length(Z)=0$.
This implies that $E_1 =\O(\sigma)$ is a line bundle and so $E$
is a vector bundle that is an extension of $\O(\l_1)$ by 
$\O(\sigma)$.  



Now if $N_X$ contains classes $\sigma$ and $\l_1$ satisfying $\sigma^2 = \l_1^2 = 0$ and $\sigma \cdot \l_1 = 4$ then there exists and extension $E= \O(\sigma) \oplus \O(\l_1)$ by \cite{Friedman}~Chapter 4~ Proposition~21~(ii).  Then $E$ is strictly semistable, i.e. it is semistable but not stable.

Since we assume in the hypotheses that $N_X$ does not contain such classes it follows that $E$ is stable so $\M$ is compact.  On the other hand if 
$N_X$ does contain such classes then $E \simeq \O(\sigma) \oplus \O(\l_1)$
is semistable so $\cM$ is not compact, which proves the necessity 
in statement (2).
\end{proof}

The following theorem gives a geometric realisation of $\M_H(2,H,2)$ for an arbitrary degree 8 K3 surface $X$.
\begin{theorem}\label{thm:M-bir}
Let $X$ be a complete intersection of quadrics in $\P^5$ and $M$ the double cover 
of the plane branched along a sextic parameterising the degenerate quadrics.  Let 
$\M$ be the moduli space of sheaves on $X$ with Mukai vector $(2,H,2)$. Then,
\begin{enumerate}
\item $\M$ is
birational to $M.$  

\item In addition if $X$ does not contain an
irreducible curve $f$ such that $f^2=0$ and $f.H=4$, then 
$\M$ is the minimal resolution of $M$.
\end{enumerate}
\end{theorem}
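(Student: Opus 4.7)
My plan is to generalize the argument of Theorem~\ref{thm:muk} by constructing a rational map from $M$ to $\cM$ via the spinor bundle construction, showing it is birational, and then, under the additional hypothesis of part (2), identifying $\cM$ with the minimal resolution of $M$.

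First, let $M^\circ \subseteq M$ be the preimage of the open subset of $\P^2$ parameterizing quadrics of rank at least 5; equivalently, $M^\circ$ is the smooth locus of $M$. For each $m \in M^\circ$ the construction of Section~3.1 produces a rank 2 sheaf $E_m$ on $X$ with Mukai vector $(2,H,2)$: if the associated quadric is smooth, $m$ selects one of the two families of planes, yielding either $S^\vee_{|_X}$ or $F_{|_X}$; if the quadric has rank 5, the single family of planes lifts to a rank 2 vector bundle with the same invariants via the pencil argument invoking \cite{NarasimhanRamanan} and \cite{Newstead}. To promote this to a morphism $M^\circ \to \cM$ I must check $H$-stability of each $E_m$. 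The argument of Corollary~3.4 uses $\rho(X) = 1$; for arbitrary Picard rank I would carry out a case analysis via the Hodge index theorem on $\Pic(X)$ to rule out destabilizing sub-line bundles, or deform from a generic complete intersection of Picard rank one and invoke openness of stability in flat families of simple sheaves.

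Second, Proposition~3.5 shows that distinct points of $M^\circ$ yield non-isomorphic spinor bundles, so the rational map $M \to \cM$ is generically injective. By Theorem~\ref{thm:v-primitive}, the Mukai vector $v = (2,H,2)$ is primitive, and it is clearly isotropic since $(v,v) = H^2 - 4 - 4 = 0$. Thus $\Spl_X(v)$ has dimension $(v,v) + 2 = 2$, and so does its open subset $\cM$. A generically injective map between irreducible surfaces of the same dimension has two-dimensional image and is therefore dominant, hence birational, which establishes part (1).

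Third, for part (2), the assumption that $X$ contains no irreducible curve $f$ with $f^2 = 0$ and $f.H = 4$ combines with Theorem~\ref{thm:M-nonempty}(2) to force $\cM$ to be compact. Since $v$ is primitive and isotropic, $\cM$ is smooth of dimension 2 as an open subset of $\Spl_X(v)$, and Mukai's Theorem~\ref{thm:mukai}(1) identifies $\cM$ as an irreducible K3 surface. A smooth K3 surface birational to the (possibly singular) K3 surface $M$ must coincide with the minimal resolution $\tilde M$, since smooth K3 surfaces are minimal and uniquely determined within their birational equivalence class.

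The main obstacle is establishing $H$-stability of the full family of spinor bundles on $X$ without the hypothesis $\rho(X) = 1$: the rank-one argument of Corollary~3.4 does not apply verbatim when $\Pic(X)$ contains extra classes. Fortunately only generic stability on $M^\circ$ is needed to conclude birationality in part (1), and this is accessible either through the deformation argument sketched above or through a direct Hodge-index-theorem bound on the slopes of sub-line bundles of $S^\vee_{|_X}$ and $F_{|_X}$.
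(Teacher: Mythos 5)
There is a genuine gap in your argument for part (1): you never establish that $\cM$ is irreducible, yet your key step --- ``a generically injective map between irreducible surfaces of the same dimension is dominant, hence birational'' --- presupposes exactly that. What you actually know is that $\cM$ is a smooth $2$-dimensional open subset of $\Spl_X(v)$; a priori it could have several connected components, and your injective map from $M^\circ$ would then only hit one of them, giving a birational map onto a component rather than onto $\cM$. You cannot repair this by citing Mukai's Theorem~\ref{thm:mukai}(1), because that requires $\cM$ to be compact, whereas part (1) is asserted (and is later used, in Proposition~\ref{prop:M-noncompact}) precisely in cases where $\cM$ is \emph{not} compact. The paper closes this gap with a relative moduli space argument that your proposal omits entirely: it forms the family $\overline{\boldsymbol{\mathcal{M}}}\rightarrow U$ of semistable moduli spaces over the moduli of degree-$8$ polarised K3 surfaces (Maruyama), uses Langton's valuative criterion for properness, identifies the fibres over the dense locus where $\rho=1$ with $M_s$ via Theorem~\ref{thm:muk}, and invokes Mukai's Corollary~3.18 (the strictly semistable locus is discrete) to conclude that every fibre $\M_s$, including the non-generic ones, has exactly one component. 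Only then does generic injectivity of $M^\circ\rightarrow\M$ yield birationality.

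A secondary, smaller issue: your treatment of $H$-stability of the spinor bundles for $\rho(X)>1$ is only a sketch with two alternative strategies, neither carried out; the Hodge-index case analysis is not obviously uniform in $\Pic(X)$, and the deformation/openness-of-stability route needs the same global family set-up as above to be made precise. Your part (2) is fine: compactness from Theorem~\ref{thm:M-nonempty}(2) plus Mukai's theorem gives an irreducible smooth K3 surface, and a smooth K3 birational to $M$ is its minimal resolution, which matches the paper's conclusion.
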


\begin{proof}
The surface $X$ lies on 3 independent 
quadrics $Q_0,Q_1,Q_2$.  Let $\cal{Q}$ be the net spanned by them and let
$V(\det \cal{Q})$ be the plane sextic curve parameterising the degenerate quadrics.
Let $M \rightarrow \P^2$ is the associated double cover of $\P^2$
branched along $V(\det \cal{Q})$.  Each smooth quadric $Q$
in the net corresponds to two vector bundles on $X$ with Mukai vector 
$(2,H,2)$.  The smooth points of the plane sextic $V(\det(\cal Q))$ correspond to quadrics which have a node.  As seen in Section~3, each of these quadrics corresponds to a single vector bundle with Mukai vector $(2,H,2)$.  By Proposition~3.5 distinct quadrics give rise to non-isomorphic vector bundles so we get an injective morphism from an open subset of 
$M$ to the moduli space  $\M$. 

We first show that $\M$ is birational to $M$.  
We will use an argument similar to that of p.391 of \cite{Mukai2}. 
To do so we have to consider the natural compactification $\overline{\M}$ of $\M$ given by considering all {\it semistable sheaves} on $X$ with given Mukai vector.
Let $\boldsymbol{X} \rightarrow F_8$ denote the moduli space of polarised K3 surfaces $(X, H)$ of degree $8$. By the Torelli theorem on $K3$ surfaces,  it is irreducible.  It contains a Zariski open subset $U$, such that every fibre of 
 $\boldsymbol{X} \rightarrow U$ is a complete intersection. Let $H_s$ denote the polarisation on $X_s$.  Correspondingly 
 we get another family $\boldsymbol{M} \rightarrow U$ of polarised K3 surfaces of degree two such that for each fibre $\boldsymbol{X}_{|_s} = X_s$,  the corresponding fibre $\boldsymbol{M}_{|_s} = M_s$ is the K3 surfaces associated to $X_s$.  
 
 By Maruyama's results the coarse (relative) moduli space 
 $\pi: \overline{\boldsymbol{\mathcal{M}}} \rightarrow U$ of {\it semistable sheaves} 
 $\boldsymbol{X} \rightarrow U$ and each fibre of $\pi$ is canonically isomorphic to 
 the moduli space $\overline{\M}_H(2,H,2)$ of {\it semistable} sheaves on the corresponding fibre of 
 $\boldsymbol{X} \rightarrow U$ \cite{Maruyama}.   By Langton's result \cite{Langton} the family 
 $\overline{\boldsymbol{\mathcal{M}}} = \{ \overline{\M}_{s_{H_s}}(2,H_s,2) \}_{(X_s, H_s) \in \boldsymbol{X}}$ is proper over $U$, and as a consequence over each fibre, the moduli space $\overline{\M_s}$ of semistable sheaves on $X_s$ is compact.
 
 There is a Zariski dense open subset $V$ of $U$ over which $\rho(X_s) = 1$.  Then by 
 Theorem~\ref{thm:muk}, we have that 
 $\overline{\M}_s \simeq {\M}_s \simeq M_s$ for each $s \in S$.  In other words the moduli space 
 $\M_s$ is a compact {\it irreducible} K3 surface which is isomorphic to $M_s$.  Therefore 
 there is an isomorphism $\overline{\boldsymbol{\M}}_{|_V} \rightarrow 
{\boldsymbol{M}}_{|_{V}}$.  By Corollary~3.18~\cite{Mukai2}, the complement of 
 $\M_s$ in $\overline{{\M}_s}$ is a discrete set.  It follows that the family 
 $\overline{\boldsymbol{\M}}_{|_U}$ has only one component, so in fact 
 for all $\ s \in U$ we have that $\M_s$ is a smooth symplectic manifold with only one component.  Since we have an injective morphism from an open subset of $M_s$ to $\M_s$, it follows that $\M_s$ is birational to $M_s$.

 The proof of the second statement now follows easily. 
 By Theorem~\ref{thm:M-nonempty} we see that $\M$ is non-empty and compact.  By Mukai's Theorem~\ref{thm:mukai} we see that it is a an irreducible K3 surface.  We know $\M$ is birational to $M$ so $\M$ is a minimal resolution of $M.$
\end{proof}

In fact as a result we get the following Proposition

\begin{proposition}\label{prop:M-noncompact}
There is an $18$ dimensional family of K3 surfaces of degree $8$ in $\P^5$ such that 
for a generic element $X$, the moduli space  $\M_H(2,H,2)$ is an open subset of the resolution of $M$, but is not
compact.  In addition, $\M_H(2,H,2)$ is a non-fine moduli space.
\end{proposition}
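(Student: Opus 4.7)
The plan is to exhibit an explicit $18$-dimensional family of degree $8$ K3 surfaces containing an irreducible elliptic curve $f$ with $f^2 = 0$ and $f \cdot H = 4$, then read off the three conclusions from Theorems~\ref{thm:M-nonempty} and~\ref{thm:M-bir} together with a direct computation of $\sigma_{\min}$.

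First I would fix the even rank $2$ lattice $N$ on basis $\{H, f\}$ with intersection form $\bigl(\begin{smallmatrix}8 & 4\\ 4 & 0\end{smallmatrix}\bigr)$. By the surjectivity of the period map (Theorem~\ref{thm:k3period}) applied to the orthogonal complement of $N$ in the K3 lattice $L$, there is an $18$-dimensional family of lattice-polarised K3 surfaces whose generic member $X$ satisfies $\Pic(X) = N$. The key observation for everything that follows is that the norm form $E^2 = 8a(a+b)$ for $E = aH + bf \in N$ vanishes only when $a = 0$ or $a = -b$, and in both cases $E \cdot H \in 4\Z$. Consequently $N$ contains no $(-2)$-class $E$ with $E \cdot H = 0$ and no class with $E^2 = 0$ and $E \cdot H \in \{1, 2, 3\}$; by Saint-Donat's criteria this is enough to ensure that $H$ is very ample and embeds $X$ as a smooth degree $8$ surface in $\P^5$, that Proposition~\ref{compint} certifies $X$ as a complete intersection of three quadrics, and that the primitive isotropic class $f$ is represented by the generic fibre of a base-point-free elliptic pencil $|f|$.

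With this geometric picture in place, Theorem~\ref{thm:M-bir}(1) produces a birational map $\cM \to M$, and because $f$ is an irreducible curve with $f^2 = 0$ and $f \cdot H = 4$, Theorem~\ref{thm:M-nonempty}(2) forces $\cM$ to be non-compact. Since $\cM$ is smooth as an open subset of $\Spl_X(v)$ and is birational to $M$, the relative moduli argument used in the proof of Theorem~\ref{thm:M-bir} produces a compact smooth birational model of $M$; this model must therefore resolve the rational double points of $M$, and $\cM$ sits inside it as the open locus of stable sheaves. For the non-fineness claim I would compute $\sigma_{\min}$ directly: any $w = (a, bH + cf, d) \in \tilde{\H}^{1,1}(X,\Z)$ pairs with $v = (2, H, 2)$ as
\[
(w, v) \;=\; (bH + cf) \cdot H - 2a - 2d \;=\; 8b + 4c - 2a - 2d \;\in\; 2\Z,
\]
with value $-2$ attained at $w = (1, 0, 0)$, so $\sigma_{\min} = 2$ and no universal sheaf exists on $X \times \cM$.

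The main obstacle I anticipate is the geometric realisability step of the first paragraph. While the existence of $X$ with $\Pic(X) = N$ is routine via the surjectivity of periods, verifying cleanly that $H$ is very ample and that $|f|$ contains an irreducible elliptic curve, rather than a reducible configuration involving extra $(-2)$-components, requires the full case analysis of the norm form $8a^2 + 8ab$ combined with Saint-Donat's very-ampleness and base-point-freeness criteria. Once that analysis is secure, all three conclusions of the proposition fall out immediately from results already proved in this section.
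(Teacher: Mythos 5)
Your proposal is correct and follows essentially the same route as the paper: the paper works with the same rank-two lattice (written in the basis $f_1,f_2$ with $f_1^2=f_2^2=0$, $f_1\cdot f_2=4$ and $H=f_1+f_2$), embeds it primitively via Nikulin's theorem, invokes surjectivity of the period map to produce the $18$-dimensional family, checks the absence of classes with $E^2=0$, $E\cdot H=3$ to get a complete intersection, and then applies Theorem~\ref{thm:M-bir}. The only difference worth noting is that the paper sidesteps the irreducibility question you flag as the main obstacle by directly exhibiting the strictly semistable sheaf $\O(f_1)\oplus\O(f_2)$ with Mukai vector $(2,H,2)$, which yields non-compactness without analysing the generic member of $|f|$; conversely, your explicit parity computation showing $(w,v)\in 2\Z$ for all $w\in\tilde{\H}^{1,1}(X,\Z)$ is more complete than the paper's one-line appeal to $\rho(X)=2$ for non-fineness.
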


\begin{proof}
Consider the rank two lattice $N \simeq \Z \oplus \Z$ with generators $f_1, f_2$ and 
pairing ${f_1}^2 = {f_2}^2 = 0, f_1 \cdot f_2 = 4.$  Then $N$ is an even lattice with signature $(1,-1)$ and 
$N^\vee / N \simeq \Z/ 2\Z \oplus \Z/2\Z.$  
By Theorem~1.14.4~\cite{Nikulin} there exists a unique primitive embedding 
of $N$ into the K3 lattice $L.$  Then $N^\perp \cap \Omega$ is an $18$ 
dimensional subset of $\Omega$ and by the surjectivity of the period 
mapping (Theorem~\ref{thm:k3period}) each point of $N^\perp \cap \Omega$ 
occurs as the period point of a marked K3 surface $X.$  The generic such 
surface $X$ has $\Pic(X) \simeq N.$
Since there are no classes with 
square -2, we see that the class $H:= f_1 + f_2$ is base point free and has 
no fixed component (Proposition 2.6 and Theorem 3.1~\cite{SaintDonat}, also stated in Proposition 4.2 and Theorem 4.1~Section 4).  Hence $H$ is ample and defines an embedding of $X$ in 
$\P^5$ as a smooth degree $8$ surface.  The moduli space $\M = \M_H(2,H,2)$ is nonempty 
as seen already in Theorem~\ref{thm:M-nonempty}.  However 
$E = \O(f_1) \oplus \O(f_2)$ is semistable but not stable with 
$v(E) = (2,H,2),$  so $\M$ is not compact.

By Proposition~\ref{compint} $X$ is not a complete intersection if there exists an 
irreducible curve $E$ such that $E^2=0$ and $E.H = 3$.  Suppose for contradiction that 
such a curve $E$ exists.  Then $E= af_1 + bf_2, a,b \in \Q$ and $2ab = 0, 4a +4b = 0$.  The only possibilities are $a=0, b = 3/4$ and $a=3/4, b = 0$.  Suppose $a=0, b = 3/4$, 
then $E=3/4f_2$, but $f_2$ is an irreducible curve of genus one and $E$ is a rational 
multiple of it, so $E$ cannot be an irreducible curve in $X$.  
This shows that $X$ is a complete intersection.  Then by Theorem~\ref{thm:M-bir}, it follows that $\M$ is birational to $M$.

To show that in general $\M$ is non-fine we note that the generic element $X$ in this family has $\rho(X) = 2.$  Then $\sigma_{min} = 2$ and hence $\M$ is non-fine (\cite{Mukai2}~Theorem~A.5 and Remark~A.7). 
\end{proof}

Returning to the case when $N_X$ does not contain a sublattice 
with basis $f_1,f_2$ where $f_1^2=f_2^2=0$ and $f_1.f_2=4$, we prove
a much stronger result, namely that every element of $\M_H(2,H,2)$
is $\mu$-stable with respect to $H$ and locally free.

\begin{theorem}\label{thm:M=Mslope}  
Let $X$ be a K3 surface of degree 8 in $\P^5$ and suppose that
$X$ does not contain an irreducible curve $f$ where $f^2=0$ and $f.H=4$, and let 
$\M=\M_H(2,H,2)$ be as in Theorem~\ref{thm:M-nonempty}.
Then every element of $\M$ is
$\mu$-stable with respect to $H$ and is locally free.
\end{theorem}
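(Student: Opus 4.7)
The plan is to separate the two claims: first establish $\mu$-stability by mimicking the semistable-but-not-stable analysis in the proof of Theorem~\ref{thm:M-nonempty}, then use $\mu$-stability to derive local freeness via the double dual. The hypothesis that $X$ contains no irreducible $f$ with $f^2=0$ and $f\cdot H=4$ will intervene at exactly one place, namely to exclude a destabilising rank one subsheaf of the given Gieseker-stable sheaf.

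For $\mu$-stability, suppose $E\in\M$ is Gieseker stable but not $\mu$-stable. Since $E$ is automatically $\mu$-semistable, there exists a saturated rank one subsheaf $F\subset E$ with $\mu(F)=\mu(E)=4$. Saturation makes $F=\O(\ell_1)$ a line bundle and the quotient of the form $\O(\sigma)\otimes I_W$ for a $0$-dimensional $W$, with $\ell_1+\sigma=H$ and $\ell_1\cdot H=\sigma\cdot H=4$. Applying Hodge Index to $(\ell_1-H/2)\cdot H=0$ (using that $v=(2,H,2)$ is primitive, so $H\neq 2\ell_1$) gives $\ell_1^2\leq 0$. The $c_2(E)=4$ relation becomes $\ell_1\cdot\sigma+\length(W)=4$, i.e.\ $\length(W)=\ell_1^2$, forcing $\ell_1^2\geq 0$ and hence $\ell_1^2=0$. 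Then by the argument already spelled out in the proof of Theorem~\ref{thm:M-nonempty} (invoking $H.f\geq 3$ from \cite{SaintDonat}~\S 7, Remark~7.1, and the equation $\ell_1\cdot H=4$), the class $\ell_1$ is represented by an irreducible curve $f$ with $f^2=0$ and $f\cdot H=4$, directly contradicting the standing hypothesis on $X$. Therefore every $E\in\M$ is $\mu$-stable with respect to $H$.

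For local freeness, let $E\in\M$ and consider the natural inclusion $E\hookrightarrow E^{\vee\vee}$. On a smooth surface, $E^{\vee\vee}$ is reflexive hence locally free, and the cokernel $T:=E^{\vee\vee}/E$ is $0$-dimensional of length $k:=\length(T)\geq 0$; $E$ is locally free precisely when $k=0$. A standard computation gives $c_1(E^{\vee\vee})=H$ and $c_2(E^{\vee\vee})=c_2(E)-k=4-k$, so $v(E^{\vee\vee})=(2,H,2+k)$ and
\[
v(E^{\vee\vee})^2 \;=\; H^2-2\cdot 2\cdot(2+k) \;=\; -4k.
\]
Since $E$ is $\mu$-stable by the first step, so is $E^{\vee\vee}$ (any destabilising subsheaf of $E^{\vee\vee}$ would intersect $E$ in a destabilising subsheaf of $E$), and in particular $E^{\vee\vee}$ is simple. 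Simplicity on a K3 surface gives $\hom(E^{\vee\vee},E^{\vee\vee})=\ext^2(E^{\vee\vee},E^{\vee\vee})=1$ by Serre duality, hence $v(E^{\vee\vee})^2=-\chi(E^{\vee\vee},E^{\vee\vee})\geq -2$. Combining, $-4k\geq -2$, forcing the non-negative integer $k$ to be $0$, and $E=E^{\vee\vee}$ is locally free.

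The only delicate point is the transition $\ell_1^2=0\Rightarrow\ell_1$ is represented by an irreducible curve $f$ of the forbidden type: one must take care that writing $|\ell_1|=|kf|$ for some primitive $f$ with $p_a(f)=1$ forces $k=1$, which uses the non-hyperelliptic lower bound $H\cdot f\geq 3$. The rest of the argument is a direct transcription of pieces already established in Theorem~\ref{thm:M-nonempty} together with the standard double-dual / Mukai-vector estimate, so no further technical difficulty is expected.
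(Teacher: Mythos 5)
Your proof is correct, but the $\mu$-stability half runs along a genuinely different track from the paper's. You exclude a slope-destabilising rank-one subsheaf by showing its first Chern class $\ell_1$ would satisfy $\ell_1^2=0$, $\ell_1\cdot H=4$ and be effective, hence produce exactly the forbidden irreducible curve $f$ --- i.e.\ you route the contradiction through the geometric hypothesis, re-running the effectivity analysis from the compactness proof of Theorem~\ref{thm:M-nonempty}. The paper instead works with the rank-one torsion-free \emph{quotient} $F_1$: Gieseker stability forces $P_{H,F_1}>P_{H,E}$ for large $n$, hence $s_2\geq 2$, while the Hodge Index Theorem gives $(c_1-H/2)^2\leq -2$; together $v(F_1)^2\leq -4$, contradicting $v(F_1)^2\geq -2$, which holds because a rank-one torsion-free sheaf is stable, hence simple. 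The paper's argument is purely numerical and in fact never consumes the forbidden-curve hypothesis in this half (it enters the paper's write-up only through the compactness remark, which is logically idle since elements of $\M$ are Gieseker stable by definition); your argument does consume the hypothesis, but in exchange makes visible why strict $\mu$-semistability is tied to the curve $f$. One small imprecision: saturating the destabiliser inside a sheaf $E$ not yet known to be reflexive only gives $F=\O(\ell_1)\otimes I_Z$ rather than a line bundle, but your $c_2$ bookkeeping becomes $\length(Z)+\length(W)=\ell_1^2\geq 0$, which still combines with the Hodge-index bound $\ell_1^2\leq 0$ to give $\ell_1^2=0$, so nothing breaks. The local-freeness half is essentially the paper's argument: your inequality $v(E^{\vee\vee})^2=-4k\geq -2$ from simplicity is the same estimate the paper phrases as $\dim\M(2,H,2+k)=-4k+2\geq 0$.
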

\begin{proof}
If $\cM$ is compact then every semistable sheaf $E$ in $M$ is stable.  Theorem~\ref{thm:M-nonempty} proves that $\M$ is compact.  Since $E$ is stable it is 
$\mu$-semistable with respect to $H$.
Assume for contradiction that $E$ is not $\mu$-stable.  It is clear that we are considering $\mu$-stability with respect to $H$ so from now on by $\mu$-stability we mean $\mu$-stability with respect to $H$.  Then $E$ has a proper rank $1$ quotient sheaf $F_1$ with $\mu(F_1) = \mu(E) = 4.$  Let $v(F_1) = (1, c_1, s_2).$  Recall that $s_2 = {c_1^2}/2 - c_2 + 1.$  Since
$\mu (F_1) = \mu (E)$ we have $H.(c_1 - H/2) = 0.$  Therefore
\begin{eqnarray*}
v(F_1)^2 &=& (c_1 -H/2) +H/2)^2 - 2s_2\\
               &=& (c_1-H/2)^2 + (H/2)^2 - 2s_2\\
               &=& (c_1-H/2)^2 + 2(1-s_2)
\end{eqnarray*}               
Now $v=(2,H,2)$ is primitive so $c_1 - H/2$ is not equal to zero.  
However $H$ is ample and $H. (c_1 - H/2) = 0,$ so
 by the Hodge index theorem $(c_1 - H/2)^2  < 0.$  On the other hand since $E$ is stable, the normalised Hilbert polynomial 
$$p_{H,E}(n) = \frac{\chi( E \otimes H^n)}{\rank(E)} <  p_{H,F_1}(n) = \frac{\chi(F_1 \otimes H^n)}{\rank(F_1)}$$ for large $n.$  



 

It is easy to calculate these polynomials using the 
usual Hirzebruch-Riemann-Roch Theorem.  We take a 
resolution of $E$ to calculate $\chi(E \otimes H^n)$.
$$ P_{H,E}(n) = 2 +4n +4n^2 < P_{H,F_1}(n) = 1+s_2 +n (c_1.H) +4n^2.$$
Since $\mu(F_1) = c_1.H = \mu(E) =4.$  It follows that $2< 1+s_2,$ so $1-s_2<0$.  This 
inequality along with $(c_1 - H/2)^2 < 0$ implies that 
$$v(F_1)^2 = (c_1 - H/2)^2 + 2(1 - s_2) < -2.$$
On the other hand $F_1$ is stable implies $F_1$ is simple so 
$h^0(F_1 \otimes {F_1}^\vee) = 1$.  This gives 
$$v(F_1)^2 = \sum_i (-1)^{i+1} \dim \Ext^i(F_1,F_1) = -2 h^0(F_1 \otimes {F_1}^\vee) + \dim \Ext^1 (F_1, F_1)  \geq -2$$ and we get a contradiction.
So every such $E$ in $\cM$ is also $\mu$-stable.

Next we prove that every $E$ is also locally free.
If $E$ is $\mu$-stable then $E^{\vee\vee}$
is also $\mu$-stable.
Consider the short exact sequence
$$ 0 \rightarrow E \rightarrow E^{\vee\vee} \rightarrow E^{\vee\vee}/E \rightarrow 0.$$
where the support of $E^{\vee\vee}/E$ is some zero dimensional subscheme $Z$ of $X$.
Then $v(E^{\vee\vee})=(2,H,2-\length(Z))$
since $c_2(E^{\vee\vee}) = c_2(E) -\length(Z)$.  
So $E \mapsto E^{\vee\vee}$ defines a morphism 
$\M \rightarrow M(2,H,2-\length(Z))$ 
and $\cM(2,H,2-\length(Z))$ is non-empty.
However 
$$\dim \cM(2,H,2-\length(Z)) = 2+(8-4(2+\length(Z))) = -4\length(Z)+2.$$
So if $\length(Z) > 0$, then $\cM(2,H,2-\length(Z))$ has negative dimension
which is a contradiction.
Therefore $E \simeq E^{\vee\vee},$ and so $E$ is locally free.
\end{proof}

We now define an equivalence relation on 
Mukai vectors.
\begin{definition}\label{def:twist}
Let $X$ be a K3.  Let $w=(w^0,w^1,w^2),v=(v^0,v^1,v^2) \in \tilde{H}^{1,1}
(X,\Z)$.  We say that $w$ is {\it equivalent} to $v$, $w \sim v$
if there exists a line bundle $L$ such that $w = \ch(L)\cdot v$.
So $$w = (v^0,v^1+v^0c_1,v^2+v^1.c_1+v^0.c_1^2/2)$$ where $c_1=c_1(L)$.
\end{definition}
The next theorem follows easily since tensoring by line bundles
preserves slope stability.
\begin{theorem}\label{thm:M(w)}
Let $X$ be a K3 surface of degree 8 which does not contain an irreducible curve
$f$ such that $f^2=0,$ and $f.H=4$.  Let $w$ be in $\tilde{H}^{1,1}(X,\Z)$
such that $w \sim (2,H,2)$.  Then $\M_H(w)$ is isomorphic to $\M_H(2,H,2)$.
\end{theorem}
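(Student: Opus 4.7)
The plan is to construct a mutually inverse pair of morphisms by tensoring with a line bundle. By Definition~\ref{def:twist}, the hypothesis $w \sim (2,H,2)$ supplies a line bundle $L$ on $X$ with $w = \ch(L) \cdot (2,H,2)$ in $\tilde{\H}^{1,1}(X,\Z)$. I would define $\Phi\colon \M_H(2,H,2) \to \M_H(w)$ by $E \mapsto E \otimes L$ and $\Psi\colon \M_H(w) \to \M_H(2,H,2)$ by $F \mapsto F \otimes L^{-1}$.

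First I would verify the Mukai vector identity. Since $\ch$ is multiplicative on tensor products and $v(\,\cdot\,) = \ch(\,\cdot\,)\sqrt{\td_X}$, we have $v(E \otimes L) = \ch(L) \cdot v(E)$; so $v(E) = (2,H,2)$ forces $v(E \otimes L) = w$ by the very definition of $\sim$. Symmetrically, $v(F \otimes L^{-1}) = (2,H,2)$ whenever $v(F) = w$.

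Second, I would check that $\Phi$ lands in $\M_H(w)$, i.e.\ that tensoring by $L$ preserves $H$-Gieseker stability. The assignment $F' \mapsto F' \otimes L^{-1}$ is a rank-preserving bijection between coherent subsheaves of $E \otimes L$ and those of $E$, and a direct Hirzebruch--Riemann--Roch computation shows that the normalised Hilbert polynomials satisfy
\begin{equation*}
P_{H,F'\otimes L^{-1}}(n) < P_{H,E}(n) \iff P_{H,F'}(n) < P_{H,E \otimes L}(n),
\end{equation*}
because both inequalities differ by the same polynomial (depending only on $c_1(L)\cdot H$ and fixed rank). Equivalently, tensoring by $L$ is an autoequivalence of $\coh(X)$ that commutes with taking subsheaves. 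The same argument with $L^{-1}$ in place of $L$ shows $\Psi$ lands in $\M_H(2,H,2)$. (At the level of $\mu$-stability, this is simply the uniform slope shift $\mu_H(F\otimes L) = \mu_H(F) + c_1(L)\cdot H$, which is what the author refers to when saying the result ``follows easily since tensoring by line bundles preserves slope stability''.)

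Finally, to promote this pointwise bijection to a morphism of moduli spaces, I would apply the twisting operation to families: for any flat family $\cE$ on $X \times S$ of stable sheaves of Mukai vector $(2,H,2)$, the sheaf $\cE \otimes \pi_X^* L$ is a flat family of $H$-stable sheaves with Mukai vector $w$, inducing $\Phi$ via the (coarse) moduli property, and similarly for $\Psi$. The canonical isomorphisms $(E \otimes L) \otimes L^{-1} \simeq E$ and $(F \otimes L^{-1}) \otimes L \simeq F$ show $\Phi$ and $\Psi$ are mutually inverse. I do not anticipate a real obstacle here; the only non-trivial content is the stability-preservation step, which reduces to the identity displayed above.
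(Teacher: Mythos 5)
There is a genuine gap in your stability-preservation step. The displayed equivalence is false in general: for a torsion free sheaf $G$ of rank $r$ on $X$, Hirzebruch--Riemann--Roch gives
$$P_{H,G\otimes L}(n) - P_{H,G}(n) \;=\; n\, c_1(L)\cdot H \;+\; \frac{c_1(G)\cdot c_1(L)}{r} \;+\; \frac{c_1(L)^2}{2},$$
and the middle term $c_1(G)\cdot c_1(L)/r$ depends on $G$, not only on $c_1(L)\cdot H$ and the rank. When $\rho(X)\geq 2$ and $c_1(L)$ is not proportional to $H$ --- which is exactly the situation in which the theorem has content, since otherwise $w$ is obtained by twisting with a power of $\O_X(1)$ --- a subsheaf $F'\subset E\otimes L$ with $\mu_H(F')=\mu_H(E\otimes L)$ but with a different slope against $c_1(L)$ can flip the Gieseker comparison. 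In other words, Gieseker stability with respect to $H$ is \emph{not} preserved by tensoring with an arbitrary line bundle; only $\mu$-stability is, because there the shift $c_1(L)\cdot H$ is uniform. So neither your $\Phi$ nor your $\Psi$ is known to land in the claimed moduli space on the strength of the argument you give.

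The paper's proof handles this asymmetrically and does not construct an inverse map at all. The forward map $E\mapsto E\otimes L$ is legitimate because Theorem~\ref{thm:M=Mslope} shows that every element of $\M_H(2,H,2)$ is $\mu$-stable (and locally free); $\mu$-stability survives the twist and implies Gieseker stability, which is what the sentence about slope stability refers to. Since $\M_H(2,H,2)$ is a compact irreducible K3 surface, its image under this injection is a compact connected component of $\M_H(w)$, and Proposition~4.4 of \cite{Mukai2} then forces $\M_H(w)$ to be irreducible, hence equal to that component. To repair your two-sided argument you would need to first prove that every element of $\M_H(w)$ is $\mu$-stable (rerunning the argument of Theorem~\ref{thm:M=Mslope} for the vector $w$) before twisting by $L^{-1}$, or else fall back on the paper's one-directional route through Mukai's irreducibility result.
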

\begin{proof}
Since $w \sim (2,H,2),$ there exist a line bundle $L$ such that $w = \ch(L)\cdot (2,H,2).$  
Tensoring by a line bundle preserves $\mu$-stability so we get a morphism
\begin{eqnarray*}
\M_H(2,H,2) &\rightarrow & \M_H(w) \\
E & \rightarrow & E \otimes L.
\end{eqnarray*}
Since $\M_H(2,H,2)$
is non-empty, compact it is an irreducible K3 surface (Theorem~\ref{thm:M-nonempty}).  Moreover since every element of $\M_H(2,H,2)$ is $\mu$-stable and locally free(Theorem ~\ref{thm:M=Mslope}) , it follows that the image of 
$\M_H(2,H,2)$ is a compact connected component of $\M_H(w).$  Then  Proposition~4.4 \cite{Mukai2} implies that $\M_H(w)$ is compact and 
irreducible hence isomorphic to  $\M_H(2,H,2).$ 

\end{proof}

\begin{lemma}
Let $X$ be a K3 and $v =(v^0,v^1,v^2) \in \tilde{H}^{1,1}(X,\Z)$,
be isotropic with $v^0 \neq 0$.  Then $v^2=(v^1)^2/2v^0$ is determined by $v^0$ and $v^1.$  Also $v^1$ is determined modulo $v^0 \Pic X,$ up to equivalence.
\end{lemma}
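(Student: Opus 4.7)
The plan is to observe that both claims are essentially direct computations using the definition of the Mukai pairing (Definition~\ref{def:mukai-pairing}) and the equivalence relation on Mukai vectors (Definition~\ref{def:twist}).

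For the first claim, I would simply apply the Mukai pairing formula to $v$ itself. Since
$$(v,v) = (v^1)^2 - 2 v^0 v^2,$$
the isotropy assumption $(v,v)=0$ immediately yields $v^2 = (v^1)^2/(2v^0)$, using the hypothesis $v^0 \neq 0$ to divide. So $v^2$ is determined by $(v^0, v^1)$.

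For the second claim, I would prove both directions of the congruence $v^1 \equiv w^1 \pmod{v^0 \Pic X}$. In one direction, if $w \sim v$ via $w = \ch(L) \cdot v$ as in Definition~\ref{def:twist}, then from the formula $w^1 = v^1 + v^0 c_1(L)$ we read off immediately that $w^1 - v^1 \in v^0 \Pic X$ (and note $w^0 = v^0$). Conversely, suppose $w$ is isotropic with $w^0 = v^0$ and $w^1 - v^1 = v^0 c_1$ for some $c_1 \in \Pic X$. Choose a line bundle $L$ with $c_1(L) = c_1$ and form $v' := \ch(L) \cdot v$. By construction $(v')^0 = v^0 = w^0$ and $(v')^1 = v^1 + v^0 c_1 = w^1$. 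It remains to verify $(v')^2 = w^2$; but by the first claim both $w$ and $v'$ (being isotropic Mukai vectors, the latter because the equivalence preserves $(\,,\,)$) have their degree-$4$ component determined by the rank and first Chern class, so $(v')^2 = (w^1)^2/(2w^0) = w^2$. Hence $w = v'$ and $w \sim v$.

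The only computation worth checking explicitly is the consistency in the converse direction, namely that
$$v^2 + v^1 \cdot c_1 + \tfrac{1}{2} v^0 c_1^2 \;=\; \frac{(v^1 + v^0 c_1)^2}{2v^0},$$
which follows by expanding the right-hand side and substituting $v^2 = (v^1)^2/(2v^0)$. There is no genuine obstacle; the content of the lemma is that isotropy makes the degree-$4$ component redundant, so the $\sim$-equivalence class is captured purely by $(v^0, v^1 \bmod v^0 \Pic X)$.
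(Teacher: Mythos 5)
Your proof is correct and follows essentially the same route as the paper: isotropy gives $2v^0v^2=(v^1)^2$, and the twist formula $w^1=v^1+v^0c_1$ shows $v^1$ only changes by elements of $v^0\Pic X$ under equivalence. You additionally verify the converse direction (that agreement of rank and of $v^1$ modulo $v^0\Pic X$ forces equivalence of isotropic vectors), which the paper's two-line proof leaves implicit; that extra check is correct and harmless.
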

\begin{proof}
Since $v$ is isotropic, we have that $2v^0v^2=(v^1)^2$.  
When we twist by a line bundle $L$ with $c_1=c_1(L),$ we get 
$$w =(v^0,v^1+v^0c_1,v^2+v^1.c_1+v^0.c_1^2/2).$$
so $w^1=v^1+v^0c_1$ and $w$ is isotropic.\end{proof}

Finally we prove a lemma showing that if $\rho(X)=2$, and $X$ contains a line, then 
in fact $X$ is a complete intersection.  

\begin{lemma}\label{lem:X-ci}
Let $X$ be a smooth degree $8$ K3 surface in $\P^5$ with hyperplane class
$H$ and suppose that $X$ contains 
a line $\l$ and $\Pic(X)\otimes \Q = \Q H \oplus \Q l.$
  Then $X$ is a complete intersection of three independent quadrics 
$Q_0$, $Q_1$, $Q_2.$ 
\end{lemma}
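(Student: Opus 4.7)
The plan is to invoke Proposition~\ref{compint}, which gives an explicit obstruction to $X$ being a complete intersection: there must exist an irreducible curve $E$ on $X$ with $E^{2}=0$ and $E\cdot H = 3$. So it suffices to show, under the hypothesis $\Pic(X)\otimes\Q = \Q H \oplus \Q \l$, that no class $E \in \Pic(X)$ satisfies these two numerical conditions. Since $\Pic(X)$ sits inside $\Pic(X)\otimes\Q$, it is enough to check there is no rational solution.

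First I would nail down the intersection form on the rational Picard group in the basis $\{H,\l\}$. We have $H^{2}=8$ because $X$ has degree $8$, $H\cdot\l = 1$ because $\l$ is a line in $\P^{5}$, and $\l^{2}=-2$ by adjunction on the K3 surface applied to the smooth rational curve $\l$. So the Gram matrix is
\[
\begin{pmatrix} 8 & 1 \\ 1 & -2 \end{pmatrix},
\]
with discriminant $-17$ (consistent with the remark after Proposition~\ref{prop:Hleff}).

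Next I would write a hypothetical class as $E = aH + b\l$ with $a,b\in\Q$, translate the two conditions into
\[
8a + b = 3, \qquad 8a^{2} + 2ab - 2b^{2} = 0,
\]
eliminate $b = 3-8a$ from the linear equation, and substitute into the quadratic one. After simplification this collapses to
\[
68 a^{2} - 51 a + 9 = 0,
\]
whose discriminant is $51^{2} - 4\cdot 68\cdot 9 = 153 = 9\cdot 17$. Since $\sqrt{17}$ is irrational, there is no rational solution $a$, and hence no class $E \in \Pic(X)$ with $E^{2}=0$ and $E\cdot H = 3$. In particular no such irreducible curve exists on $X$, so by Proposition~\ref{compint} $X$ is cut out by three independent quadrics.

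The only real point where something could go wrong is in pinning down the intersection numbers $H\cdot\l=1$ and $\l^{2}=-2$ (needed to get the correct quadratic); once those are in hand, the argument is purely a rationality check, and the appearance of the class number $17$ of the lattice is exactly what kills the existence of a cubic section $E$ with $E^{2}=0$.
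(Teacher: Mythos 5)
Your proof is correct and follows exactly the paper's own argument: reduce via Proposition~\ref{compint} to the nonexistence of a class $E=aH+b\l$ with $E\cdot H=3$ and $E^2=0$, and check that $8a+b=3$ together with $8a^2+2ab-2b^2=0$ has no rational solution. You supply the elimination explicitly (arriving at $68a^2-51a+9=0$ with discriminant $9\cdot 17$), which the paper merely asserts; the only nit is that $17$ is the (absolute value of the) discriminant of the Picard lattice, not its class number.
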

\begin{proof}
By Proposition~\ref{compint}, we need to show that $X$
does not contain an irreducible curve $E$ such that 
$E^2=0$ and $E.H=3$.  Let $E=aH+bl$ we see that $E.H=3=8a+b$.
and $E^2=0=8a^2+2ab-2b^2$ has no non-trivial rational solutions for $a,b$.
\end{proof}

We apply the above results to our calculation of the Fourier-Mukai Transform in the next section.

\section{Cohomological Fourier-Mukai transform}\label{sec:FM}

Let $X_s$ be a K3 surface with ample class $A_s$ and $\M_s = \M_{A_s}(v_s)$ a K3
 surface which is a moduli space of sheaves on $X_s$ with primitive, isotropic Mukai vector $v_s.$  Suppose $\M_s$ is not fine with similitude $\sigma.$  Let $\cE_s$ be a quasi-universal sheaf on $X_s \times M_s.$  Then there exist a marked {\it degeneration} $X_0$ of $X_s$ with polarisation $A_0$ such that the corresponding moduli space $\M_0 = \M_{A_0}(v_0)$ is a fine moduli space of sheaves on $X_0$ with universal sheaf ${\cE}_0.$  
This means that there is a marked family $\boldsymbol{X} \rightarrow S$ (where $S$ is an open neighbourhood of $0 \in \C$ with $s \in S$) of polarised K3 surfaces 
and a corresponding marked family   $\boldsymbol{\mathcal{M}} \rightarrow S$ of polarised K3 surfaces such that, for $s \in S,$
\begin{enumerate}
\item $\boldsymbol{X}_{|_s} = X_s$ is a K3 surface with polarisation $A_s$ and 
$\boldsymbol{\mathcal{M}}_s = \M_s = \M_{A_s}(v)$ is the moduli space of sheaves on $X_s.$  

\item A flat family of sheaves $\boldsymbol{\mathcal{E}}$ exists on $\boldsymbol{X} \times \boldsymbol{\mathcal{M}}$ such that its restriction
$\boldsymbol{\mathcal{E}}_{|_{X_s \times \M_s}} = {\cE_s}_{|_{X_s \times \M_s}}$ is the quasi-universal sheaf 
of the moduli problem on $X_s.$  

\item At $s=0$ the moduli problem is fine and 
$$\boldsymbol{\mathcal{E}}_{|_{X_0 \times \M_0}} \simeq {\cE}_0 \otimes \pi^*_{\M_0}{W}$$
where $W$ is a rank $k\sigma$ vector bundle on $\M_0,$ for some positive
integer $k$.

\item  We have an identification
$\H^* (X_s \times \M_s) \simeq \H^*(X_0 \times \M_0)$ and
$\ch (\boldsymbol{\mathcal{E}}) \in {\H}^*(\boldsymbol{X} \times \boldsymbol{\mathcal{M}}, \Q)$ is constant.

\end{enumerate}

In particular the isomorphism in $(4)$ implies that 
$$\ch {\cE}_s = \ch {\cE_0} \cdot \pi_{\M_0}^*{\ch W} \in H^*(X_0 \times \M_0, \Q).$$

The $\H^0$-component of $f_\cE(x)$ is given by $(x,v).$  Since
$$f_{\cE \otimes \pi_M^* W}(x) = f_\cE(x) \cdot \frac{\ch W^\vee}{\rank(W)}$$
the $\H^2$-component of $f_\cE (x)$ for $x \in v^\perp$ is independent of the choice of a quasi-universal sheaf.  

We prove this below.
\begin{proposition}\label{prop:fm1}
Let $\M_0 = {\M_0}_{A_0}(v)$ be a fine moduli space of sheaves on $X_0.$  Let $X_s$ be a marked deformation of $X_s$ and 
$\M_1 = {\M}_{A_1}(v)$ the corresponding moduli space of sheaves on $X_1.$  Assume that $\M_1$ is not fine.  Let $\cE_1$ be a quasi-universal sheaf with similitude $\sigma$ on 
$X_1 \times \M_1$ and $\cE_0$ a universal sheaf on $X_0 \times \M_0.$  
Let $x \in v^\perp / \Z v, f_{\cE_1}(x) = (a^0,a^1,a^2)$ and 
$f_{\cE_0}(x) = (b^0,b^1,b^2).$  Then $a^0 = b^0 = 0,$ and  $a^1 = b^1 \in \H^*(X_0 \times \M_0, \Q)$ 
\end{proposition}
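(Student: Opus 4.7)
The plan is to reduce $f_{\cE_1}(x)$ to a computation on the central fibre $X_0 \times \M_0$, where the moduli space is fine, and then to invoke the multiplicativity formula $f_{\cE \otimes \pi_\M^* W}(x) = f_\cE(x) \cdot \frac{\ch W^\vee}{\rank(W)}$ recorded just above the proposition. The two ingredients that make this work are already in place: the constancy of $\ch(\boldsymbol{\cE})$ along the family $\boldsymbol{X} \times \boldsymbol{\M} \rightarrow S$, and the identification $\boldsymbol{\cE}_{|_{X_0 \times \M_0}} \simeq \cE_0 \otimes \pi^*_{\M_0} W$ with $\rank(W) = k\sigma$.

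The $\H^0$-vanishing is immediate. As recalled in the preliminaries, the $\H^0$-component of $f_\cE(x)$ is the Mukai pairing $(x,v)$, independent of the choice of quasi-universal sheaf. Since $x \in v^\perp/\Z v$, this pairing is zero, so $a^0 = b^0 = 0$.

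For the $\H^2$-component, I would first transport $f_{\cE_1}(x)$ to the central fibre. Using the identification $\H^*(X_s \times \M_s, \Q) \simeq \H^*(X_0 \times \M_0, \Q)$ coming from the family and the constancy of $\ch(\boldsymbol{\cE})$, the class $\ch(\cE_1)$ becomes $\ch(\cE_0) \cdot \pi^*_{\M_0}(\ch W)$, and hence the transported class $f_{\cE_1}(x)$ equals $f_{\cE_0 \otimes \pi^*_{\M_0} W}(x)$ in $\H^*(\M_0, \Q)$. Applying the twist formula then gives
$$f_{\cE_0 \otimes \pi^*_{\M_0} W}(x) = f_{\cE_0}(x) \cdot \frac{\ch W^\vee}{\rank(W)} = (b^0, b^1, b^2) \cdot \left( 1 - \frac{c_1(W)}{\rank(W)} + \cdots \right).$$
Extracting the degree-two piece on $\M_0$ yields $b^1 - b^0 \cdot c_1(W)/\rank(W)$, and since $b^0 = 0$ by the previous step, this collapses to $b^1$. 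Thus $a^1 = b^1$ under the identification.

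The main obstacle is mostly bookkeeping and degree counting; there is no genuinely hard step once the preceding set-up is granted. The essential mechanism is that the $\H^0$-vanishing forces the correction term $b^0 \cdot c_1(W)/\rank(W)$ coming from the twist to vanish, so only the leading piece $b^1$ of $f_{\cE_0}(x)$ contributes in degree two. The content of the proposition is really that the $\H^2$-component of the Fourier--Mukai map on $v^\perp$ is insensitive to the quasi-universality ambiguity, and this is precisely what the computation exhibits.
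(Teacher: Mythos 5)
Your proposal is correct and follows essentially the same route as the paper: both arguments use the constancy of $\ch(\boldsymbol{\mathcal{E}})$ along the family to identify $f_{\cE_1}(x)$ with $f_{\cE_0}(x)\cdot \ch W^\vee/\rank(W)$ via the projection formula, and then observe that the degree-two component differs from $b^1$ only by a multiple of the $\H^0$-component, which vanishes because $x\in v^\perp$. The only (immaterial) difference is that the paper labels the two expansions with $a^i$ and $b^i$ swapped relative to the statement.
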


\begin{proof}
Let $\pi_{X_0}, \pi_{\M_0}$ denote projections onto the first and second factor respectively.  

Then 
$\cE_1$ is a flat deformation of $\cE_0 \otimes \pi_M^* W$ for some vector bundle $W$ of rank $\sigma$.
The marking gives isomorphisms 
$\H^*(X_1, \Z) \simeq \H^*(X_0, \Z)$, $\H^*(M, \Z) \simeq \H^*(M_0, \Z)$ and
$$\H^*(X_1 \times \M_1, \Z) \stackrel{\psi}{\rightarrow} \H^*(X_0 \times \M_0, \Z).$$

Via the isomorphism $\psi$ we get $\ch {\cE_1} = {\ch \cE_0} \cdot  {\ch \pi_{\M_0}^*W}$  and 
$$Z_{\cE_1} = \pi^*_{X_0} \sqrt{\td_{X_0}} \cdot \ch({\cE_1}^\vee) \cdot \pi^*_{\M_0}\sqrt{\td_{\M_0}}/\sigma = Z_{\cE_0} \cdot \frac{\ch (\pi_{\M_0}^*W^\vee)}{\sigma}.$$   
We write 
$f_{\cE_1}(x) = b^0 + b^1.t + b^2.t^2$ and $f_{\cE_0}(x) = a^0 + a^1.t + a^2.t^2$  where 
$a^i, b^i \in \H^{2i}(M, \Q).$ 
Recall that 
$f_{\cE_1}(x) = \pi_{{\M_1}*}(Z_{\cE_1} \cdot \pi_{X_1}^*(x)).$  So we get
\begin{eqnarray*}
f_{\cE_1}(x) & = & \pi_{{X_1}*} (Z_{\cE_1} \cdot \pi_{X_1}^*(x))\\
&=& \pi_{{\M_0}*}\left(Z_{\cE_0} \pi_{X_0}^*(x)\cdot \frac{\ch (\pi_{\M_0}^*W^\vee)}{\sigma} \right)\\
&=& f_{\cE_0}(x) \cdot \frac{\ch W^\vee}{\sigma}\end{eqnarray*}
by the projection formula.
So we see that 
\begin{eqnarray*}
b^0 + b^1.t + b^2.t^2 & = & (a^0 + a^1.t + a^2.t^2)(1 + \frac{c_1(W)}{\sigma}.t + (\ldots).t^2) \\
b^0 + b^1.t + b^2.t^2 & = & a^0 + \left(a^1 + a^0.{\frac{c_1(W)}{\sigma}}\right).t  + (\ldots)t^2
\end{eqnarray*}

By Lemma~4.11~\cite{Mukai2}, we have that $f_\cE(v) = (0,0,1)$ is the fundamental class and by the remark preceding Lemma~4.11~\cite{Mukai2}, for any $x \in \H^*(X, \Z)$ the $\H^0$-component of $f_\cE (x)$ is equal to $(x,v).$  So $f_\cE (x) \simeq (x,v) \pmod{t}.$
So for $x \in v^\perp$ such that $x \neq \Z v$ we get $a^0 = b^0 = 0$ and 
$b^1 = a^1.$ 
\end{proof}

\begin{lemma}\label{lem:fm2}
Let $X$ be a generic K3 surface of degree $8$ in $\P^5$ and let $v=(2,H,2)$.
Then $\rho(X) = 1$  
and $\M(v) \simeq  M,$ where $\phi:M \rightarrow \P^2$ is the double cover associated to $X.$  Let $h = \phi^{*}(\O_{\P^2}(1))$ be the polarisation on $M.$  Then the image of
the coset $(1,0,-1) + \Z v$ in $v^\perp / \Z v$ has two possibilities, namely,
$$f_\cE : (1,0,-1) + \Z v \rightarrow (0,\pm h,0).$$
\end{lemma}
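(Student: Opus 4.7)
The plan is to reduce the lemma to a Mukai-pairing computation, combined with the Hodge isometry $f_\cE\colon v^\perp/\Z v \to \H^2(\M,\Z)$ provided by Theorem~\ref{thm:mukai}(3) and the identification $\M\simeq M$ from Theorem~\ref{thm:muk}. The strategy is to show that $(1,0,-1)+\Z v$ is a primitive generator of a rank-one sublattice of self-intersection $2$, so that its image must be a primitive class of square $2$ in $\Pic(M)$; the only such classes turn out to be $\pm h$.

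First I would compute the relevant pairings using Definition~\ref{def:mukai-pairing}:
$$((1,0,-1),v) = 0\cdot H - 1\cdot 2 - (-1)\cdot 2 = 0 \quad\text{and}\quad ((1,0,-1),(1,0,-1)) = 2.$$
So $(1,0,-1)$ belongs to $v^\perp\cap\tilde{\H}^{1,1}(X,\Z)$ and has square $2$. For generic $X$ we have $\rho(X)=1$, and the ambient lattice $v^\perp\cap\tilde{\H}^{1,1}(X,\Z)$ is the set of triples $(a,bH,c)\in\Z^3$ satisfying $4b=a+c$, with $\{(1,0,-1),v\}$ as a basis. In particular $(1,0,-1)+\Z v$ generates the rank-one quotient; it is primitive because any equation $(1,0,-1)=nw+mv$ with integral $w=(w_0,w_1H,w_2)\in v^\perp\cap\tilde{\H}^{1,1}(X,\Z)$ forces $n(w_0-2w_1)=1$ (on comparing $\H^0$ and $\H^2$ components), hence $n=\pm 1$.

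By Theorem~\ref{thm:mukai}(3), $f_\cE$ restricts to a Hodge isometry of this rank-one lattice onto the $(1,1)$-part of $\H^2(M,\Z)$, namely $\Pic(M)$. Therefore $\Pic(M)$ is itself of rank one; since $h\in\Pic(M)$ has $h^2=2$, we conclude $\Pic(M)=\Z h$. A primitive element of square $2$ in $\Z h$ is necessarily $\pm h$, which yields $f_\cE((1,0,-1)+\Z v)=(0,\pm h,0)$ under the embedding $\H^2(M,\Z)\hookrightarrow\tilde{\H}(M,\Z)$.

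The genuine obstacle is not this calculation but the remaining sign ambiguity: from the Mukai-pairing viewpoint alone, both $(0,h,0)$ and $(0,-h,0)$ are compatible with the isometry, and determining which sign actually occurs requires an explicit geometric description of a (quasi-)universal sheaf tested against another class, which is carried out in the subsequent theorem of this section.
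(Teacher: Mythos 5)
Your proposal is correct and follows essentially the same route as the paper: both arguments invoke Mukai's Hodge isometry $f_\cE\colon v^\perp/\Z v \to \H^2(M,\Z)$, observe that $(1,0,-1)+\Z v$ generates the rank-one lattice $(v^\perp/\Z v)\cap\tilde{\H}^{1,1}(X,\Z)$ with self-intersection $2$, and conclude the image must be $\pm h$ since $\Pic(M)=\Z h$. Your version merely spells out the primitivity and basis checks that the paper leaves implicit, and correctly identifies that the sign is not determined at this stage.
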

\begin{proof}
By Mukai's theorem $$f_\cE : v^\perp/ \Z v \cap \tilde{\H}^{1,1}(X, \Z) \rightarrow \Pic(M)$$ is an isomorphism.  Also $f_\cE (v) = (0,0,1)$ the fundamental class of $M$ by Lemma~4.11~\cite{Mukai2}.  In this case $$v^\perp/ \Z v \cap \tilde{\H}^{1,1}(X,\Z)$$ is generated by 
the coset $(1,0,-1) + \Z v.$  Since $((1,0,-1) + \Z v)^2 = 2$ its image in $\tilde{\H}^{1,1}(M, \Z)$ has to be an an element that squares to $2.$  So 
$$f_\cE : (1,0,-1)+ t\cdot v \rightarrow (0,\pm h,0).$$
\end{proof}

When $X,$ a K3 surface of degree $8$ in $\P^5,$ contains a line $\l$ and has $\rho (X) = 2$  it turns out that $X$ has another special feature.  
The linear system $|H-\l|$ embeds $X$ in $\P^3$ as a quartic surface
containing a twisted cubic curve, the image of $\l$.  Then taking the residual intersection of 
the image of $X$ in $\P^3$
with quadric surfaces containing the twisted cubic
realises $X$ as a double cover of $\P^2$.  Since $X$ is a K3 surface, 
it has to be
branched along a sextic.  The polarisation of degree $2$ on $X$ which is the pull back of $\O_{\P^2}(1)$ corresponds to the class 
$h:= 2H- 3\l.$  In fact $X \simeq M$ where $M \rightarrow \P^2$ is the associated double cover of $\P^2.$  The proof that $X \simeq M$ involves arguments using theory of lattices and is a special case of the results in \cite{MadonnaNikulin}.  The isomorphism classes of all such K3 surfaces is a Zariski open subset of an $18$ dimensional family $\boldsymbol{Y}$ contained in the $19$ dimensional family of all K3 surfaces of degree $8$ in $\P^5.$

We make a small change of notation and let $X_0$ now denote a smooth K3 surface in $\P^5$ of degree 8 which contains a line $\l$ and has $\rho(X_0) = 2.$    
We use this notation to make the deformation theory argument clear.
Recall that $X_0 \simeq \M_0 \simeq M_0$ where $\phi:M_0 \rightarrow \P^2$ is the double cover of the plane associated to $X_0.$  The degree $2$ polarisation on $X_0$ is given by the class $h:=2H - 3\l.$  Let $\boldsymbol{X} \rightarrow S$ be a marked deformation of $X_0$ transverse to the family $\boldsymbol{Y}.$  Then the generic element $X_s = \boldsymbol{X}_{|_s}$ is a K3 surface with $\rho(X_s) = 1.$  The corresponding moduli space $\M_s = \M_H(2,H,2)$ is non-fine.  Let $\cE_s$ be the quasi-universal sheaf on $X_s$ and $\cE_0$ the universal sheaf on $X_0 \times \M_0.$  Let
$$f_{\E_0} : \tilde{\H}(X_0, \Z) \rightarrow \tilde{\H}(M_0, \Z)$$
 be the isomorphism of lattices given by the Fourier-Mukai map on cohomology 
as in Theorem~\ref{thm:mukai-map}.  Then we compute 
$$f_{\cE_0}: \tilde{\H}^{1,1}(X_0, \Z) \rightarrow \tilde{\H}^{1,1}(M_0, \Z)$$ explicitly up to twists by line bundles and addition by some constant factors.  



\begin{theorem}\label{thm:mukai-map}
Let $X$ be a K3 surface of degree 8 in $\P^5$ that contains a line
and has $\rho(X)=2$, and let $\M= \M_H(2,H,2)$.
Recall that 
$X \simeq \M \simeq M .$ Then 
\begin{enumerate}
\item The cosets $x = (1,0,-1) + \Z v$ and $w = (1, -H +2\l, -4) + \Z v$ form a basis for $v^\perp/ \Z v \cap \tilde{\H}^{1,1}(X_0, \Z).$
\item The lattice generated by $x$ and $w$ has the intersection pairing $x^2 = 2, x\cdot w = 5, w^2 = 4.$  It is mapped isometrically to $\Pic (M)$ via the Fourier-Mukai map on cohomology induced by 
$$f_\E: v^\perp / \Z v \cap \tilde{\H}^{1,1}(X, \Z)  \rightarrow  {\H}^{1,1}(M, \Z).$$ 
\item $f_\E (0,0,1) = (2,H,2)$ up to twists by line bundles. 
\end{enumerate}
\end{theorem}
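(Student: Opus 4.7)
The plan is to handle the three claims in order, with parts (1) and (2) reducing to direct lattice calculations combined with Mukai's Theorem~\ref{thm:mukai}, while part (3) requires the deformation set-up introduced just before the theorem together with the twist analysis of Proposition~\ref{prop:fm1}.

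For part (1), I would first verify that both cosets lie in $v^\perp \cap \tilde{\H}^{1,1}(X_0, \Z)$ by expanding the Mukai pairing with $v = (2, H, 2)$; using $H^2 = 8$, $H\cdot \l = 1$, $\l^2 = -2$, the condition $(a, v) = 0$ becomes a single linear relation on the coordinates of $a$ that both $x$ and $w$ satisfy. Because $v$ is primitive by Theorem~\ref{thm:v-primitive}, the sublattice $v^\perp \cap \tilde{\H}^{1,1}(X_0, \Z)$ has rank three inside the rank-four lattice $\tilde{\H}^{1,1}(X_0, \Z)$, and a direct $3\times 3$ determinant calculation confirms that $\{v, x, w\}$ is a $\Z$-basis; from this it follows that $\{x, w\}$ descends to a $\Z$-basis of the quotient.

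For part (2), the pairings $x^2 = 2$, $x\cdot w = 5$, $w^2 = 4$ are immediate from the Mukai-pairing formula. Because $X_0$ contains a line with $\rho(X_0) = 2$, Lemma~\ref{lem:X-ci} gives that $X_0$ is a complete intersection of three quadrics, and the discussion preceding this theorem establishes that $\cM$ is fine and isomorphic to $M \simeq X_0$. Theorem~\ref{thm:mukai} then supplies a Hodge isometry of lattices $f_\E : \tilde{\H}(X_0, \Z) \to \tilde{\H}(M, \Z)$ whose restriction identifies $v^\perp / \Z v \cap \tilde{\H}^{1,1}(X_0, \Z)$ isometrically with $\Pic(M)$. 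Comparison with Proposition~\ref{prop:Hleff}, which describes the intersection form on $\Pic(M) = \Pic(X)$ in the basis $\{h, D\}$ as $\begin{pmatrix} 2 & 5 \\ 5 & 4 \end{pmatrix}$, completes the identification.

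For part (3), I would write $f_\E((0,0,1)) = (R, \alpha, S) \in \tilde{\H}^{1,1}(M_0, \Z)$ and pin down each component via the Hodge isometry: pairing with $f_\E(v) = (0,0,1)$ fixes $R$; isotropy transfers to $S = \alpha^2 / 4$; and $\alpha \in \Pic(M_0)$ is constrained by pairings with $f_\E(x)$ and $f_\E(w)$. To identify the latter images, I would invoke the deformation set-up from just before this theorem: on a generic deformation $X_s$ with $\rho(X_s) = 1$, Lemma~\ref{lem:fm2} gives $f_{\E_s}(x) = (0, \pm h_s, 0)$, and Proposition~\ref{prop:fm1} guarantees that the $\H^2$-component on $v^\perp$ is invariant along the deformation, so $f_{\E_0}(x)$ has $\H^2$-part $\pm h$ on $M_0$. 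For $w$, which involves the class $\l$ that has no counterpart on $X_s$, I would instead classify candidate classes intrinsically: Lemma~\ref{lem:picM} and the same norm-form analysis extended to square-four classes, together with the already-computed constraint $\bar x \cdot \bar w = 5$, force $f_{\E_0}(w) = (0, \pm D, 0)$. Solving the resulting linear system for $\alpha$ in $\Pic(M_0)$ then determines it modulo $2\Pic(M_0)$; the residual ambiguity is exactly the twist freedom $\alpha \mapsto \alpha - 2 c_1(L)$ induced by tensoring the universal sheaf with $\pi_M^* L$ (Proposition~\ref{prop:fm1}). Selecting a twist that normalises $\alpha$ to $H$ yields $f_\E((0,0,1)) = (2, H, 2)$.

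The hard part will be part (3), particularly the identification of $f_{\E_0}(w)$: unlike $f_{\E_0}(x)$, which is handled cleanly by deforming to $X_s$, the class $w$ involves $\l$ which does not deform, so pinning $f_{\E_0}(w)$ forces one to work intrinsically on $M_0$ using the norm-form structure of $\Pic(M_0)$. The twist analysis must also be arranged precisely so that the residual $2\Pic(M_0)$ ambiguity in $\alpha$ matches the line-bundle twist freedom of the universal sheaf, and sign conventions must be tracked carefully through the Hodge isometry to ensure the normalisation gives exactly $H$ rather than some other representative.
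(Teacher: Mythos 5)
Your parts (1) and (2) are correct and essentially match what the paper does or leaves implicit: the membership checks $(x,v)=(w,v)=0$, a determinant computation showing $\{v,x,w\}$ is a basis of $v^\perp\cap\tilde{\H}^{1,1}$, the pairing values $2,5,4$, and the appeal to Theorem~\ref{thm:mukai} for the isometry onto $\Pic(M)$ compared against the matrix $\begin{pmatrix}2&5\\5&4\end{pmatrix}$ from the remark after Proposition~\ref{prop:Hleff}. The divergence, and the gap, is in part (3). The paper does not solve a linear system for $\alpha$: it writes a $4\times4$ matrix $P$ for $f_\E^{-1}$ with \emph{unknown} entries, imposes the full isometry condition $P^TAP=A$ together with $\det P=1$, solves the resulting quadratic/linear system, and only then inverts $P$ to read off $v(E_x)$ up to an explicit twist. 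The reason it must do this is exactly the point your proposal elides: the deformation argument (Proposition~\ref{prop:fm1} plus Lemma~\ref{lem:fm2}) controls only the $\H^2$-component of $f_\E$ on $v^\perp$, so $f_\E(x)=(0,\pm h,\gamma_1)$ and $f_\E(w)=(0,\pm D,\gamma_2)$ with $\gamma_1,\gamma_2$ unknown integers (these change when $\cE$ is twisted by a line bundle, while the $\H^2$-parts do not). The constraints $(f_\E(0,0,1),f_\E(x))=((0,0,1),x)=-1$ and its analogue for $w$ therefore read $\alpha\cdot h-2\gamma_1=-1$ and $\alpha\cdot D-2\gamma_2=-1$. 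As a linear system in $\alpha$ alone this is not solvable: setting $\gamma_1=\gamma_2=0$ gives $13a+8b=-1$ and $7a+3b=-1$ for $\alpha=aH+b\l$, whose unique solution is non-integral. With the $\gamma_i$ present you only learn $\alpha\cdot h$ and $\alpha\cdot D$ modulo $2$.

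That mod-$2$ information does in fact single out the class of $H$ in $\Pic(M)/2\Pic(M)$ (the residues $0$, $\l$, and $H+\l$ each fail one of the two parity tests, since $\l\cdot h=8$ and $(H+\l)\cdot D=10$ are even), so a corrected version of your argument would give a cleaner route than the paper's matrix computation; but as written, ``solving the resulting linear system'' is a step that fails, and the reduction to parities is the missing idea. Two smaller points: your claim that the constraints ``force $f_{\E_0}(w)=(0,\pm D,0)$'' overlooks that $C\cdot h=5$, $C^2=4$ has two solutions, $D=H-\l$ and $D'=9H-14\l$ --- the paper lists both preimage candidates $(1,-H+2\l,-4)+\Z v$ and $(-6,-2H-2\l,-11)+\Z v$ and treats one case, the other being symmetric under the covering involution --- so you must either rule out $D'$ or carry it as a second case; and the vanishing of the $\H^4$-components you wrote in $(0,\pm D,0)$ is not something you are entitled to assume, for the reason above.
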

\begin{proof}

Let $v = (2,H,2).$  Then $f_\E(v) = (0,0,1)$ the fundamental class by \cite{Mukai2}~Lemma~4.11.  Since $f_\E$ is an isometry, 
$f_\E$ maps $x + \Z v$ to an element in $\Pic (M)$ that squares to $2.$  
Such elements in $\Pic(M)$ are characterised by 
Lemma~\ref{lem:picM} and include $2H - 3\l$ and $2H + 5\l$.  To determine
which  element, we 
consider now a marked deformation $\boldsymbol{X} \rightarrow S$ of $X$ such that $X = \boldsymbol{X}_{|_{s=0}}$ and the generic element $X_s = \mathcal{X}_{|_s}$ has $\rho(X_s) = 1.$  Here $S$ is an open disk as before.  Then $\M_s$ is in general a non-fine moduli space of sheaves on $X_s.$   The marking gives isomorphisms $\H^*(X,\Z) \simeq \H^*(X_s, \Z)$ and $\H^*(M, \Z) \simeq \H^*(M_s, \Z).$ By 
Proposition~\ref{prop:fm1} the $\H^2$-component of 
$f_\cE (x)$ is equal to the $\H^2$-component of $f_{\cE_s} (x).$  By Lemma~\ref{lem:fm2} $f_{\cE_s}(x + \Z\cdot v) = (0, \pm h, 0).$  Let us assume that 
$f_{\cE_s}(x + \Z\cdot v) = (0, h,0)$.  The case 
$f_{\cE_s}(x + \Z\cdot v) = (0,-h,0)$ is similar.  So 
$f_\E(x + \Z\cdot v) = (0,h,0) = (0, 2H - 3\l, 0).$   
A basis for $\Pic (X)$ is given by $2H-3\l, H-\l$ with intersection pairing 
$$(2H-3\l)^2 = 2, \quad (H-\l)^2 = 4, \quad (2H-3\l)\cdot (H-\l) = 5.$$
If we try to find a divisor $C=aH+b\l$ such that 
$(2H-3\l)\cdot C=5$ and $C^2=4$, there are two possibilities for $C$ given
by $H-\l$ and $9H-14\l$.
This gives us two corresponding possibilities 
$$(1,-H+2\l,-4)+\Z v, \mbox{ and } 
(-6,-2H-2\l,-11)+\Z v$$
in $v^\perp/\Z v $.  We present the analysis considering this first solution
and skip the other similar case.
So we get the following information about $f_\E$ under our choices.
\begin{eqnarray*}
f_\E: \tilde{\H}^{1,1}(X, \Z) & \rightarrow & \tilde{\H}^{1,1}(M, \Z) \\
 (2,H,2) & \mapsto & (0,0,1)\\
(1,0,-1) + x\cdot v &\mapsto & (0,2H-3l,0) \\
(1,-H+2\l,-4) + y \cdot v &\mapsto & (0,H-l,0)\\
\end{eqnarray*}
We write a matrix for $f^{-1}_\E$ in terms of the basis $$(1,0,0),(0,H,0),
(0,\l,0),(0,0,1).$$
Using the above equations we solve for 
$$f^{-1}_\E(0,H,0)=(1,-2H+4\l,7)+sv$$ $$f^{-1}_\E(0,\l,0)=(2,-3H+6\l,-11)+tv$$
and we get the partially defined matrix for $f^{-1}_\E$
$$P=\begin{pmatrix}
a & 2+2s & 1+2t & 2 \\
b & -3+s & -2+t & 1 \\
c & 6 & 4 & 0 \\
d & -11+2s & -7+2t & 2 
\end{pmatrix}.$$
Now if we let $A$ be the Gram matrix of the Mukai paring in terms of this
basis we have that 
$$A =\begin{pmatrix}
0 & 0 & 0 & -1 \\
0 & 8 & 1 & 0 \\
0 & 1 & -2 & 0 \\
-1 & 0 & 0 & 0 
\end{pmatrix}.$$
Since $f^{-1}_\E$ is an isometry, we get the matrix equation
$P^T A P = A$ which gives us the four nontrivial polynomial equations
for the variables $a,b,c,d,s,t.$  This includes one linear equation
which turns out to be $\det P =2d-8b-c+2a=1$.
Solving this for $c$ and substituting into the other equations yields
two linear equations
$$ 21d+13a-68b+t=10$$
$$ 32d+19a-102b+s=15.$$
We solve these for $s$ and $t$.  
Our matrix now depends on $a,b,d$ which satisfy the quadratic equation:
$$q:=-18da+68bd+68ba-34b-136b^2-8d^2+8d-8a^2+8a-2=0$$
We can now compute the inverse
of $P$  to obtain a matrix for $f_\E$ 
and observe that its last column shows that 
$v(E_x)=(2,(6a-32b+10d-5)H+(-10a+52b-16d+8)\l,a)$
which we can twist by $(3-5d+16b-3a)H+(5a-26b+8d-4)\l$
to show that
$v(E_x) = (2,H,2)$ up to equivalence.  
\end{proof}

We are now in a position to prove our main theorem about the classical moduli problem.

\begin{theorem}\label{thm:main} 
Let $X$ be a K3 surface of degree $8$ in $\P^5.$  
Assume that $X$ contains 
a line $\l$ and $\rho (X) = 2.$  
Then $X$ is the base locus of a net of quadrics.  Let $\phi:M \rightarrow \P^2$ be the double cover of $\P^2$ branched along the sextic curve parameterising degenerate quadrics in the net. Then
\begin{enumerate}
\item The moduli space $\M_H(2,H,2) \simeq M$ is a fine moduli space and  is  isomorphic to $X$.
\item  The universal sheaf $\E$ on $X \times X$ is symmetric, i,e, its restriction 
to either of the two factors is a rank $2$ sheaf with Mukai vector $(2,H,2)$ 
up to twists by line bundles. 
\end{enumerate}
\end{theorem}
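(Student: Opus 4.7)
The plan is to reduce both claims to the structural results on $\M_H(2,H,2)$ from Section~4 combined with the Fourier--Mukai computation of Theorem~\ref{thm:mukai-map}. First, Lemma~\ref{lem:X-ci} forces $X$ to be a complete intersection of three quadrics under the hypotheses, and Proposition~\ref{prop:Hleff} gives the isomorphism $M \simeq X$. To invoke Theorem~\ref{thm:M-bir}(2), which would realize $\M$ as the minimal resolution of $M$, I need $\M$ compact, i.e.\ by Theorem~\ref{thm:M-nonempty}(2) no irreducible curve $f$ on $X$ with $f^2 = 0$ and $f \cdot H = 4$. Writing $f = aH + b\l$ and using $H^2=8$, $H \cdot \l = 1$, $\l^2 = -2$, elimination of $b = 4-8a$ from $8a+b = 4$ reduces $f^2 = 0$ to $17a^2 - 17a + 4 = 0$, whose discriminant $17$ is not a square and hence has no rational solution. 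So $\M$ is compact, and since $M \simeq X$ is already smooth, $\M \simeq M \simeq X$.

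For fineness I would check $\sigma_{\min} = 1$ directly. The element $w = (0, \l, 0) \in \tilde{\H}^{1,1}(X, \Z)$ satisfies $(w, v) = H \cdot \l = 1$, so the gcd defining $\sigma_{\min}$ is $1$ and a universal sheaf $\E$ on $X \times \M$ exists, completing part~(1). For the symmetry in part~(2), the restriction $\E|_{X \times \{m\}}$ is by construction of the moduli space a stable rank two sheaf with Mukai vector $v = (2, H, 2)$. For the vertical restriction, $v(\E|_{\{x\} \times M})$ is computed (up to duality) by $f_\E$ applied to the point class $(0,0,1)$, and Theorem~\ref{thm:mukai-map}(3) identifies this with $(2, H, 2)$ up to multiplication by $\ch(L)$ for some $L \in \Pic(M)$. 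Because $\E$ is determined only up to tensoring with $\pi_M^* L'$, an operation that multiplies $v(\E|_{\{x\} \times M})$ by $\ch(L')$ while leaving $v(\E|_{X \times \{m\}})$ unchanged, I can normalize $\E$ by choosing $L'$ to cancel the twist, so that both restrictions have Mukai vector exactly $(2, H, 2)$.

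All the serious work is already packaged in Theorem~\ref{thm:mukai-map}, whose proof combined the deformation-theoretic identification of the $\H^2$-component of $f_\E$ with the generic non-fine case (Proposition~\ref{prop:fm1}) with the lattice classification of $(\pm 2)$-classes in $\Pic(X)$ (Lemma~\ref{lem:picM}). Granted that result, the remaining steps above are essentially lattice arithmetic plus a single line-bundle normalization. The main subtlety I would monitor is that the ``up to twists by line bundles'' clause in Theorem~\ref{thm:mukai-map}(3) be absorbable by an actual element of $\Pic(M) = \Pic(X)$ rather than by a merely rational cohomology class; this is automatic from the integrality of $f_\E$ in the fine setting, but it is precisely what turns the computation into the symmetric statement of the theorem.
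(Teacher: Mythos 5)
Your proposal is correct and follows essentially the same route as the paper: fineness via $(v,(0,\l,0))=H\cdot\l=1$, the identification $\M\simeq M\simeq X$ through the complete-intersection and birationality results of Sections 3--4, and the symmetry of $\E$ via $v(\E|_{\{x\}\times M})=f_\E(0,0,1)=(2,H,2)$ up to twist from Theorem~\ref{thm:mukai-map}(3). Your explicit verification that no class $f=aH+b\l$ satisfies $f^2=0$, $f\cdot H=4$ (reducing to $17a^2-17a+4=0$ with non-square discriminant) usefully fills in a compactness check the paper leaves implicit.
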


\begin{proof}

Recall from Section~\ref{sec:rk2vb} that there exists a conic bundle $I$ on 
$X \times M$ such that $I_{|_{X \times \{m\}}}$ is a rank $2$ vector bundle with Mukai vector 
$(2,H,2).$
We also showed earlier that in this case (i.e. when $X$ contains a line) the conic bundle $I$ admits a section and hence lifts to a rank two universal sheaf $\cE$ on $X \times M.$  
 Also $\sigma = 1$ because for $v = (2,H,2)$ and $w = (0,\l,0)$ the Mukai pairing gives $(v,w) = 1$ which implies $\M_H(2,H,2)$ is a fine moduli space.

We already know from Theorem~\ref{thm:M-nonempty} that  $\M_H(2,H,2)$ is a K3 surface which is birational to $M.$  Since $M \simeq X$ (\cite{MadonnaNikulin}) it follows that 
$\M_H(2,H,2) \simeq M \simeq X.$  For clarity of notation we continue to denote 
$\M_H(2,H,2)$ by $M$ instead of $X$ even though they are isomorphic.  

Since $M$ is a fine moduli space, Mukai's results (Theorem~\ref{thm:mukai}) show that $f_\cE$ induces a Hodge isometry of the lattices
$$f_\cE : \tilde{\H}(X, \Z) \rightarrow \tilde{\H}(M, \Z).$$  
Let $E_x := \cE_{|_{\{x\} \times M}}.$  Then $v(E_x)$ is also an isotropic element of 
$\tilde{\H}^{1,1}(M, \Z).$  
In fact one proves $v(E_x) = f_\E(0,0,1)$ using the same arguments in Lemma~4.11~\cite{Mukai2} and the Grothendieck-Riemann-Roch Theorem. Since $f_\E$ is a Hodge isometry, $v(E_x)$ is primitive.  

By Theorem~\ref{thm:mukai-map} it follows that $f_\E(0,0,1) = (2,H,2)$ up to equivalence.
So we get a two dimensional flat family of sheaves parameterised by $X$ with 
Mukai vector $w \simeq (2,H,2).$
\end{proof}

So far we have considered examples $X$ with $\rho (X) = 2$ and $M$ is a fine moduli space.
However it can happen that $\rho (X) = 2$ but $M$ is not a fine moduli space.
 Given a smooth plane sextic curve $C$
and a choice of an even, ineffective theta-characteristic $L$ on $C$, 
there exists a family of quadrics $\mathcal{Q}$ in $\P^5$ such that $V(\det \mathcal{Q})=C$.
So there are as many nets of quadrics $\mathcal{Q}$ as there are 
theta-characteristics $L$ on $C$ with $h^0(L)=0$ (Theorem 1 \cite{Tjurin1}).  
For a generic 
curve of genus $g$ the number of even, ineffective theta characteristics is given by $2^{g-1}(2^{g} + 1).$  So in our case there exist $2^9(2^{10} + 1)$ such nets of quadrics for a generic $C.$  In \cite{Geemen} it is shown that there exist $2^9(2^{10}+1)$ distinct even sublattices of $T_M$ of index 2.  These correspond to certain two-torsion elements of $\Br(M).$  
Another interesting relation is that 
the set of theta-characteristics on a curve $C$ of genus $g$ is in 
one to one correspondence with the set of spin structures on $C$
as in \cite{Atiyah}~Proposition~3.2.

Returning to the case where $X$ contains a conic, we have the following result.

\begin{theorem}\label{thm:M-nonfine} 
Let $X$ be a K3 surface of degree 8 in $\P^5$ which contains a plane
conic and has $\rho(X)=2$. Then the moduli space $\cM(2,H,2)$ is isomorphic to $M$ and is non fine.
\end{theorem}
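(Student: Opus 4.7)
The plan is to verify both claims using the Picard lattice of $X$. Since $X$ contains a smooth plane conic $C_0$, adjunction on the K3 surface gives $C_0^2 = -2$, and directly $C_0 \cdot H = 2$. With $\rho(X) = 2$ this determines $\Pic(X) = \Z H \oplus \Z C_0$ with Gram matrix
\[
\begin{pmatrix} 8 & 2 \\ 2 & -2 \end{pmatrix}.
\]

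Two short Diophantine arguments then let me invoke the earlier structural results. For $X$ to be a complete intersection of three quadrics, Proposition~\ref{compint} requires the absence of an irreducible $E = aH + bC_0$ with $E^2 = 0$ and $E \cdot H = 3$; but $8a + 2b = 3$ has no integer solutions, so $X$ is a complete intersection. To apply Theorem~\ref{thm:M-nonempty}(2) for compactness of $\M$, I must exclude an irreducible $f = aH + bC_0$ with $f^2 = 0$ and $f \cdot H = 4$. Substituting $b = 2 - 4a$ into $f^2 = 8a^2 + 4ab - 2b^2 = 0$ reduces to $5a^2 - 5a + 1 = 0$, whose discriminant $5$ rules out rational solutions. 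Hence no such $f$ exists and $\M$ is non-empty and compact.

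With compactness in hand, Theorem~\ref{thm:M-bir}(2) realises $\M$ as the minimal resolution of $M$. To upgrade this to an isomorphism $\M \simeq M$, I need $M$ smooth, i.e.\ the branch sextic $C$ smooth, equivalently every degenerate quadric in the net has rank exactly $5$. The tritangent produced by Proposition~\ref{tricase} does not in itself impose a singularity on $C$; it only records that the pencil of quadrics over the tritangent line is spanned by a square. Granting this, $M$ is a smooth K3 and $\M \simeq M$. For the non-fineness, I compute $\sigma_{\min}$ directly: for arbitrary $w = (a, bH + cC_0, d) \in \tilde{\H}^{1,1}(X,\Z)$, the Mukai pairing with $v = (2, H, 2)$ gives
\[
(w, v) = (bH + cC_0) \cdot H - 2a - 2d = 8b + 2c - 2a - 2d,
\]
which always lies in $2\Z$; the value $2$ is attained at $w = (0, C_0, 0)$. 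Hence $\sigma_{\min} = 2$, and by the criterion stated just before Theorem~\ref{thm:mukai}(4), $\M$ is not fine.

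The hard part is the smoothness of $M$ in the step above: one must rule out rank-$\leq 4$ quadrics in the net. The conic on $X$ forces a specific pattern on the symmetric matrix of linear forms defining the net (as in the proof of Proposition~\ref{tricase}), but a careful look at that matrix, rather than a genericity argument, is what is needed to confirm that the rank stays equal to $5$ along $C$. Everything else in the proof is a direct lattice computation.
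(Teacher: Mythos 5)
Your reduction to Proposition~\ref{compint} and Theorem~\ref{thm:M-nonempty} is correct (the two Diophantine checks in the lattice $\Z H\oplus\Z C_0$ with Gram matrix $\left(\begin{smallmatrix}8&2\\2&-2\end{smallmatrix}\right)$ do go through), and your non-fineness argument is both correct and genuinely different from the paper's. You compute $(w,v)=8b+2c-2a-2d\in 2\Z$ for all $w\in\tilde{\H}^{1,1}(X,\Z)$, so $\sigma_{\min}=2$, and conclude by the criterion stated before Theorem~\ref{thm:mukai} (the same criterion the paper itself uses in Proposition~\ref{prop:M-noncompact}). The paper instead argues by contradiction on $M$: assuming a universal sheaf, $v(E_x)=(2,mh+n\Gamma,k)$ must be isotropic, Newstead's theorem forces the restriction of $E_x$ to a generic genus-two curve in $|h|$ to have odd degree, so $2m+n$ is odd, and then $m^2+mn-n^2=2k$ gives a parity contradiction. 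Your route is shorter and purely lattice-theoretic; the paper's makes visible the geometric source of the obstruction (the odd-degree Newstead bundle on the genus-two curve over the tritangent). Either is acceptable.

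The genuine gap is exactly where you flag it: you never prove that $\M$ is isomorphic to $M$ rather than merely its minimal resolution, because you leave the smoothness of $M$ as ``the hard part'' to be settled by inspecting the ranks of the quadrics along the discriminant sextic. That direct approach is harder than necessary and is not how the paper proceeds. The paper's closing move is indirect: by Theorem~\ref{thm:mukai}(3), $f_\E$ identifies $v^\perp/\Z v$ with $\H^2(\M,\Z)$ as Hodge structures, so $\rho(\M)=\rho(X)=2$. On the other hand $\Pic$ of the resolution of $M$ already contains the two independent classes $h$ and $\Gamma$, where $\Gamma+\Gamma'=\phi^*(\l)$ are the two components over the tritangent supplied by Proposition~\ref{tricase}; if $M$ had any singular point, the resolution would acquire an additional independent $(-2)$-curve and $\rho(\M)\ge 3$, a contradiction. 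Hence $M$ is smooth and $\M\simeq M$ with no analysis of quadric ranks at all. You should replace your ``granting this'' step with this Picard-number argument; as written, the isomorphism half of the theorem is not established.
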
 
\begin{proof}
Since $\Pic(X)=\Z \oplus \Z$, generated by the conic $C$ and $H$, we
may let $f=aC+bH$ and we see that there are no rational solutions to
$f^2=0$ and $f.H=3$ or $f.H=4$.  So $X$ is a complete intersection by 
Proposition~\ref{compint} and $\M$ is compact by Theorem~\ref{thm:M-nonempty}.
The same theorem also shows that $\M$ is a minimal resolution of $M$. Recall that the 
sextic branch locus admits a tri-tangent.
The Picard group of $M$ is generated by $h := \phi^{*}(\O_{\P^2}(1))$ and the
curve $\Gamma$, where
$\Gamma+\Gamma' := \phi^{*}(\l)$ are the components of $M$
over the tritangent.  If $M$ is singular then $\M$ contains an
independent $(-2)$-curve so we have the contradiction that 
$2=\rho(X) = \rho(\M) \geq 3$.  So $\M$ is isomorphic to $M$.

  The intersection numbers on $M$
are $h^2 = 2, h . \Gamma = 1, {\Gamma}^2 = -2.$
Suppose for contradiction that $M$ is a fine moduli space.  Then there exists a universal sheaf $\E$ on $X \times M,$ and a Hodge isomorphism of lattices
$$f_\E : \tilde{\H}(X, \Z) \rightarrow \tilde{\H}(M, \Z).$$
So $f_\E(0,0,1) = v(\E_{|_{\{x\} \times M}}) := v(E_x)$ is an isotropic element of $\tilde{\H}(M, \Z).$  Then $$v(E_x) = (2, m h + n \Gamma, k)$$
where we require that 
$(m  h + n  \Gamma)^2 - 4k = 0.$  
Since the restriction of $E_x$ to $h$, a generic element of $|h|,$ is a vector bundle of odd degree (\cite{Newstead}), we see that 
$c_1(E_x).h = 2m + n$ is an odd number and hence $n$ is odd.
But this is impossible since we have
$$ m^2 + mn - n^2 = 2k,$$
and for all values of $m$ the right hand side is odd.
So $M$ is not a fine moduli space. 
\end{proof}

Most of our results in this paper deal with K3 surfaces satisfying $\rho(X)=2$.  
We are investigating stability conditions of this classical problem in other contexts such as $\rho(X) > 2$.  In addition we have geometric constructions for the associated Azumaya algebras in the cases when $\M_H(2,H,2)$ is a non-fine moduli space.The results will be published in a forthcoming paper \cite{IngallsKhalid}.

\end{section}

\end{document}